\documentclass[10pt,a4paper]{amsart} 

\usepackage[utf8]{inputenc}
\usepackage[T1]{fontenc}
\usepackage{amsmath, amssymb}
\usepackage{amsthm}
\usepackage{mathtools}   
\usepackage{bbm}         

\usepackage[dvipsnames]{xcolor}
\usepackage{ifdraft}

%
\usepackage{xargs}  
\usepackage[colorinlistoftodos,prependcaption,textsize=tiny,obeyFinal]{todonotes}
\newcommandx{\eblq}[2][1=]{\todo[linecolor=red,backgroundcolor=red!25,bordercolor=red,#1]{#2}}

\usepackage{tikz-cd}   

\usepackage{enumitem} 
\setenumerate[0]{label=(\roman*)}   

\usepackage{hyperref}
\hypersetup{
    unicode=true,          
    pdftitle={},    
    pdfauthor={},     
    pdfsubject={},   
    pdfkeywords={}{}, 
    pdfborder={0 0 0}
}


\renewcommand{\phi}{\varphi} 

\newcommand\dash{\nobreakdash-\hspace{0pt}}

\newtheorem{quotedthm}{Theorem}

\newtheorem{thm}{Theorem}[section]
\newtheorem{coro}[thm]{Corollary}
\newtheorem{prop}[thm]{Proposition}
\newtheorem{lemm}[thm]{Lemma}
\newtheorem{conj}[thm]{Conjecture}
\theoremstyle{definition}
\newtheorem{defn}{Definition}
\newtheorem*{rmk}{Remark}

\newcommand{\N}{{\mathbb N}}
\newcommand{\Z}{{\mathbb Z}}
\newcommand{\R}{{\mathbb R}}
\renewcommand{\C}{{\mathbb C}}
\newcommand{\Q}{{\mathbb Q}}
\newcommand{\dd}{{\,\mathrm{d}}}
\newcommand{\Ocal}{\mathcal{O}}

\newcommand{\transp}[1]{\prescript{t}{}{\!#1}} 
\newcommand{\abs}[1]{\lvert#1\rvert}    
\newcommand{\absbig}[1]{\bigl\lvert#1\bigr\rvert}   
\newcommand{\abse}[1]{\left\lvert#1\right\rvert}    
\newcommand{\norm}[1]{\lVert#1\rVert}   
\newcommand{\normbig}[1]{\bigl\lVert#1\bigr\rVert}   

\newcommand{\indic}{\mathbbm{1}}

\DeclareMathOperator{\Aut}{Aut}
\DeclareMathOperator{\Aff}{Aff}
\DeclareMathOperator{\Hom}{Hom}

\DeclareMathOperator{\SL}{SL}
\DeclareMathOperator{\GL}{GL}
\DeclareMathOperator{\Lip}{Lip}
\DeclareMathOperator{\Lie}{Lie}
\DeclareMathOperator{\Heis}{Heis}

\DeclareMathOperator{\Supp}{supp}
\DeclareMathOperator{\tr}{tr}

\DeclareMathOperator{\mes}{m}
\DeclareMathOperator{\NC}{NC}

\newcommand{\pp}{\boxplus}
\newcommand{\mm}{\boxminus}

\newcommand{\Ccal}{\mathcal C}
\newcommand{\Fcal}{\mathcal F}
\newcommand{\Hcal}{\mathcal H}
\newcommand{\Lcal}{\mathcal L}

\newcommand{\Wcal}{\mathcal W}

\newcommand{\Tbb}{\mathbb T}
\newcommand{\Pbb}{\mathbb P}
\newcommand{\Ebb}{\mathbb E}

\newcommand{\acts}{\curvearrowright}

\title{Equidistribution of affine random walks on some nilmanifolds}
\author{Weikun He}
\thanks{The authors are supported by ERC 2020 grant HomDyn (grant no.~833423). W.H. is also supported by KIAS Individual Grant (no. MG080401)}
\address{Korea Institute for Advanced Study, Seoul 02455, Republic of Korea}
\email{heweikun@kias.re.kr}

\author{Tsviqa Lakrec}
\thanks{}
\address{Einstein Institute of Mathematics, The Hebrew University of Jerusalem, Jerusalem 91904, Israel.}
\email{tsviqa@gmail.com}

\author{Elon Lindenstrauss}
\thanks{}
\address{Einstein Institute of Mathematics, The Hebrew University of Jerusalem, Jerusalem 91904, Israel.}
\email{elon@math.huji.ac.il}

\date{\today}
 \dedicatory{Dedicated to the memory of Jean Bourgain}
\begin{document}

\maketitle

\begin{abstract}
We study quantitative equidistribution in law of affine random walks on nilmanifolds, motivated by a result of Bourgain, Furman, Mozes and the third named author on the torus.
Under certain assumptions, we show that a failure to having fast equidistribution is due to a failure on a factor nilmanifold.
Combined with equidistribution results on the torus, this leads to an equidistribution statement on some nilmanifolds such as Heisenberg nilmanifolds. In an appendix we strengthen results of de Saxce and the first named author regarding random walks on the torus by eliminating an assumption on Zariski connectedness of the acting group.
\end{abstract}

\section{Introduction}

In this paper we consider random walks on compact nilmanifolds by automorphisms of the nilmanifolds as well as by affine maps.
Recall that a nilmanifold is a space of the form $X = N/\Lambda$, where $N$ is a connected simply connected nilpotent Lie group, and $\Lambda < N$ is a lattice (which in a nilpotent Lie group is necessarily cocompact; cf.~\cite{Raghunathan}).
An \emph{automorphism} of $X$ is defined to be the homeomorphism of $X$ induced by a Lie group automorphism of $N$ that preserves $\Lambda$; we denote the group of all such automorphisms by $\Aut(X)$.
An \emph{affine transformation} on $X$ is the composition of an automorphism of $X$ by left translation by an element of $N$; the group of affine transformations of $X$, denoted by $\Aff(X)$ is the semidirect product $\Aut(X) \ltimes N$.
The projection $\Aff(X) \to \Aut(X)$ will be denoted by $\theta$.

Given a Borel probability measure $\mu$ on $\Aut(X)$ (or more generally $\Aff(X)$) and a starting point $x \in X$, we can define a random walk by successively applying to $x$ a sequence of elements $g_1$, $g_2$, \dots each $g_i$ chosen i.i.d according to $\mu$. Thus the distribution of the random walk after $n$ steps, i.e. of the random element $g_n...g_1x$ in $X$, is given by $\mu^{*n}*\delta_x$.

\medskip

In this situation,  Bekka and Guivarc'h give a sufficient and necessary condition for the random walk defined by $\mu$ to have a spectral gap on $L^2(X)$:

\begin{quotedthm}[Bekka-Guivarc’h {\cite[Theorem 1]{BekkaGuivarch}}]
\label{thm:BekkaGuivarch}
Let $X=N/\Lambda$ be a nilmanifold and let $H$ be a countable subgroup of $\Aff(X)$. The following are equivalent
\begin{enumerate}
    \item The action of $H$ on $X=N/\Lambda$ has a spectral gap.
    \item The action of $H$ on $T=N/[N,N]\Lambda$ has a spectral gap.
    \item \label{it:nofactorus} There is no non-trivial $H$-invariant factor torus $T'$ of $T$ such that the projection of $H \subset \Aff(X)$ to $\Aut(T')$ is virtually abelian.
\end{enumerate}
\end{quotedthm}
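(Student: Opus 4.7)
The plan is to split the equivalence into (1)$\Leftrightarrow$(2), which uses the nilpotent structure of $N$ via a Kirillov-style decomposition of $L^2(X)$, and (2)$\Leftrightarrow$(3), which is the classical spectral-gap criterion for automorphism actions on tori. The implication (1)$\Rightarrow$(2) is essentially formal: the projection $X \to T$ is $H$-equivariant, so $L^2_0(T)$ sits as an $H$-invariant closed subspace of $L^2_0(X)$, and any almost-invariant unit vector for the $H$-action on $L^2_0(T)$ is also an almost-invariant unit vector for the $H$-action on $L^2_0(X)$; hence a spectral gap upstairs descends to a spectral gap downstairs.

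For the deeper direction (2)$\Rightarrow$(1), I would decompose $L^2(X)$ under the left $N$-action. By Kirillov/Howe theory for compact nilmanifolds one obtains $L^2(X) = L^2(T) \oplus V$, where $V$ is spanned by functions on which the center $Z(N)$ acts non-trivially; moreover $V$ decomposes further into isotypic summands $V_\chi$ indexed by non-trivial characters $\chi$ of $Z(N)$ trivial on $Z(N)\cap\Lambda$. The automorphism part $\theta(H)$ permutes these summands through its dual action on $\widehat{Z(N)}$. Granted (2), it remains to exhibit a spectral gap on $V$. The key observation is that $\theta(H)$ acts on the discrete set of such characters as a subgroup of $\GL_k(\Z)$, so any orbit of a non-trivial character is infinite; combining this spreading with uniform matrix-coefficient decay for the non-trivial irreducible representations of $N$ appearing in the $V_\chi$, one concludes that no unit vector in $V$ can be almost-invariant under $H$. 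An induction on the nilpotency step, replacing $X$ by a lower-step nilmanifold quotient when possible, streamlines the bookkeeping.

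The equivalence (2)$\Leftrightarrow$(3) is a purely toral statement. Identifying $T \cong \R^d/\Z^d$ and $\hat T \cong \Z^d$, the dual $\theta(H)$-action partitions $\Z^d\setminus\{0\}$ into orbits, and spectral gap on $T$ fails exactly when Fourier-localized approximate invariant vectors exist. A short argument shows this happens iff some $\theta(H)$-invariant rational subspace $W \subset \Q^d$ supports a virtually abelian image of $\theta(H)$; dualizing, $W$ corresponds to a non-trivial factor torus $T'$ of $T$ on which $\theta(H)$ acts virtually abelianly, i.e.\ the negation of (3). The non-trivial direction (no such $T'$ implies spectral gap) relies on orbit dispersion for non-virtually-abelian subgroups of $\GL_d(\Z)$, in the spirit of Furstenberg's theorem on the distribution of $\SL_2(\Z)$-orbits.

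The main obstacle is (2)$\Rightarrow$(1): obtaining uniform matrix-coefficient decay for the non-central representations of $N$ appearing in $V$, and controlling the $\theta(H)$-orbit structure on $\widehat{Z(N)}$ so that the contributions of the infinitely many pieces $V_\chi$ can be summed without losing the gap. The compatibility of $\Lambda$ with the central series of $N$ and the Kirillov integral formula for compact nilmanifolds are the technical ingredients that make this possible.
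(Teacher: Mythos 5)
This theorem is quoted by the paper from \cite{BekkaGuivarch} (with the Heisenberg case due to Bekka--Heu \cite{BekkaHeu}); the paper contains no proof of its own, so your sketch can only be measured against the original argument and on its internal merits. Your implication (1)$\Rightarrow$(2) is correct and formal, and framing (2)$\Leftrightarrow$(3) as a purely toral statement is the right reduction, although the hard direction there (no virtually abelian factor torus implies a gap) is itself a substantial theorem that you only gesture at via ``orbit dispersion''.

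The genuine gap is in (2)$\Rightarrow$(1). Your engine for excluding almost-invariant vectors in $V=L^2(X)\ominus L^2(T)$ is the claim that $\theta(H)$ acts on the non-trivial characters of the center with infinite orbits, so that Fourier mass spreads among the isotypic pieces $V_\chi$. This is false in general: for a Heisenberg nilmanifold --- the main example of this very paper --- every automorphism acts on the one-dimensional center through $\{\pm 1\}$, so every orbit of central characters has at most two elements, yet the equivalence still holds. The second ingredient you invoke, ``uniform matrix-coefficient decay for the non-trivial irreducible representations of $N$'', is also unavailable: $N$ is nilpotent (amenable), and for a representation with central character $\chi$ the matrix coefficients have constant modulus along the center, so there is no Howe--Moore-type decay to sum over the $V_\chi$. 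The actual mechanism in Bekka--Guivarc'h is different: for a fixed non-trivial central character $\chi$, Stone--von Neumann/Kirillov theory identifies the $N$-representation on $V_\chi$ as a multiple of a single infinite-dimensional irreducible $\pi_\chi$, and one exploits that $\pi_\chi\otimes\overline{\pi_\chi}$ factors through $N/Z(N)$, reducing the question of almost-invariant vectors to a lower-step nilmanifold and ultimately to the maximal torus factor by induction on the nilpotency class; nothing in that argument needs the characters $\chi$ themselves to have infinite $\theta(H)$-orbits. Unless you replace your ``infinite orbit plus coefficient decay'' step by an argument of this relative, representation-theoretic kind, the proof of (2)$\Rightarrow$(1) does not go through.
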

Recall for a torus $T=V/\Delta$ where $V$ is an Euclidean space and $\Delta$ is a lattice in $V$, a factor torus is some $T' = T/S$ where $S$ is a subtorus of $T$ (that is, $S=W/(W\cap\Delta)$ for $W$ a rational linear subspace of $V$ relative to the rational structure defined by $\Delta$)\footnote{Note that finite index quotients are not considered factor tori under this definition.}. 
If $H$ is some subgroup of $\Aff(T)$, then the factor torus $T'$ is said to be $H$-invariant if $W$ is invariant under $\theta(H)$, equivalently if the action of $H$ on $T$ induces an action of $H$ on $T'$ so that the projection map from $T$ to $T'$ is $H$-equivariant.
For a nilmanifold $X = N/\Lambda$, the quotient $N/[N,N]\Lambda$ is a torus, called the \emph{maximal torus factor} of $X$.

When these equivalent conditions in Theorem~\ref{thm:BekkaGuivarch} are satisfied, for all but a set of $x$ of exponentially small measure, the random walk $g_n...g_1 x$ emanating from $x$ is exponentially close to being equidistributed. The purpose of this paper is to understand the random walk $g_n...g_1 x$ starting from \emph{any} $x \in X$.

\medskip
Under certain assumptions on $\mu$, that are substantially stronger than those in Theorem~\ref{thm:BekkaGuivarch}, we show that either the random walk equidistributes, namely, $\mu^{*n}*\delta_x$ converges to the Haar measure $\mes_X$ on $X$ in the weak-$*$ topology or the random walk is trapped in a proper closed set invariant under the group generated by $\Supp(\mu)$.
Furthermore, the equidistribution result is quantitative.
Informally, we show that if the equidistribution is not fast, it is because the random walk is close to a "small" orbit.

\subsection{Quantitative equidistribution}
Similar to the spirit of Theorem~\ref{thm:BekkaGuivarch}, we aim to prove that if a random walk on a nilmanifold does not equidistribute, it is because the projected random walk on a factor torus does not equidistribute.
This leads to the following definitions.

Consider a nilmanifold $X = N/\Lambda$. 
We fix a Riemannian distance on $X$.
For $\alpha \in {(0,1)}$, let $\Ccal^{0,\alpha}(X)$ denote the space of $\alpha$-Hölder continuous functions on $X$, equipped with the norm
\[\norm{f}_{0,\alpha} = \norm{f}_\infty + \sup_{x\neq y \in X} \frac{\abs{f(x) - f(y)}}{d(x,y)^\alpha}.\]
For $\nu$ and $\eta$ Borel measures on $X$, recall that the \emph{$\alpha$-Wasserstein distance} between them is defined by
\[\Wcal_\alpha(\nu,\eta) = \sup_{f \in \Ccal^{0,\alpha}(X) : \norm{f}_{0,\alpha} \leq 1} \abse{\int_X f \dd \nu - \int_X f \dd \eta}.\]



Let $T = V/\Delta$ be a torus of dimension $d$. We choose an identification $\Z^d$ with its group of unitary characters via some isomorphism $a \mapsto \chi_a$. Each closed subgroup $L$ of $T$ is uniquely determined by its dual
\[L^* = \{\,a \in \Z^d \mid L \subset \ker \chi_a \,\}.\]
\begin{defn} A closed subgroup $L$ of a torus $T$ is said to have \emph{height~$\leq h$} if its dual $L^* \subset \Z^d$ can be generated by integer vectors of norm $\leq h$.
\end{defn}

\begin{rmk}
This notion depends on the choice of the isomorphism from $\Z^d$ to the group of unitary characters.
For any torus we encounter in this paper, we assume such choice is implicitly fixed in advance.
\end{rmk}

For the next two definitions, we will denote by $T =N/[N,N]\Lambda$ the maximal torus factor of $X$ and by $\pi \colon X \to T$ the canonical projection.
\begin{defn}
Let $\lambda > 0$, $C > 1$ and $\alpha \in {(0,1]}$ be parameters.
Let $\mu$ be a Borel probability measure on a $\Aut(X)$, and let $\Gamma = \langle \Supp(\mu)\rangle$.
We say that the $\mu$-induced random walk on $X$ satisfies $(C,\lambda, \alpha)$\dash{}\emph{quantitative equidistribution} if the following holds for any integer $m \geq 1$ and any $t \in {(0,\frac{1}{2})}$.
Assume
\[m \geq C \log\frac{1}{t} \quad \text{and} \quad \Wcal_\alpha(\mu^{*m}*\delta_x, \mes_X) > t.\]
Then there exists a point $x' \in X$ such that
\begin{enumerate}
    \item $d(x,x') \leq e^{-\lambda m}$
    \item \label{it:small height} $\pi(\Gamma x')$ lies in a proper closed $\Gamma$-invariant subgroup of $T$ of height~$\leq t^{-C}$.
\end{enumerate}
\end{defn}
\noindent
In the case where $\Gamma$ acts irreducibly on $T$ (that is, $\Gamma$ acts irreducibly on $N/[N,N]$ over $\Q$), the condition \ref{it:small height} can be replaced by
\begin{enumerate}
    \item[(ii')] $\pi(\Gamma x')$ consists of rational points of denominator $\leq t^{-C}$.
\end{enumerate}

\medskip
In the situation of an affine random walk, the definition needs to be adjusted. We fix a left-invariant Riemannian distance on the Lie group $\Aff(X)$.
\begin{defn}
Let $\lambda > 0$, $C > 1$ and $\alpha \in {(0,1]}$ be parameters.
Let $\mu$ be a finitely supported Borel probability measure on $\Aff(X)$.
We say that the $\mu$-induced random walk on $X$ satisfies $(C,\lambda, \alpha)$\dash{}\emph{quantitative equidistribution} if the following holds for any integer $m \geq 1$ and any $t \in {(0,\frac{1}{2})}$.
Assume
\[m \geq C \log\frac{1}{t} \quad \text{and} \quad \Wcal_\alpha(\mu^{*m}*\delta_x, \mes_X) > t.\]
Then there exist a point $x' \in X$ and a closed subgroup $H' \subset \Aff(X)$ such that
\begin{enumerate}
    \item $d(x,x') \leq e^{-\lambda m}$
    \item $d(g,H') \leq e^{-\lambda m}$ for every $g \in \Supp(\mu)$,
    \item \label{it:aff small height} $\pi(H' x') - \pi(x')$ lies in a proper closed $\theta(H')$-invariant subgroup of $T$ of height~$\leq t^{-C}$.
\end{enumerate}
\end{defn}
\noindent
Here, we are thinking of $H'$ as generated by $e^{-\lambda m}$-perturbations of elements of $\Supp(\mu)$. Since $\Aut(X)$ is discrete, the perturbation only happens on the translation part. In particular, we will have $\theta(H') = \theta(H)$ where $H = \langle \Supp(\mu)\rangle$.
Again, if $\theta(H)$ acts irreducibly on $T$, then the condition \ref{it:aff small height} can be replaced by
\begin{enumerate}
    \item[(iii')] $\pi(H' x') - \pi(x')$ consists of rational points of denominator $\leq t^{-C}$.
\end{enumerate}

In both the linear and affine case, 
we say that the $\mu$-induced random walk on $X$ satisfies $(\lambda, \alpha)$\dash{}\emph{quantitative equidistribution} if it satisfies $(C,\lambda,\alpha)$\dash{}quantitative equidistribution for some constant~$C$.

\begin{rmk}
For  $0 < \alpha < \alpha' \leq 1$, we have $\Wcal_{\alpha'}(\nu,\eta) \leq \Wcal_{\alpha}(\nu,\eta)$ for any measures $\nu$ and $\eta$.
Hence, $(C,\lambda,\alpha)$\dash{}quantitative equidistribution implies $(C,\lambda,\alpha')$\dash{}quantitative equidistribution for any $\alpha' \in {(\alpha,1]}$.
\end{rmk}

\subsection{Statement of the main result}
Let $\mu$ be a Borel probability measure on a Lie group $H$.
If $H$ acts on an Euclidean space $Z$ via $\theta_Z \colon H \to \GL(Z)$,
We define the \emph{essential exponential growth rate} of the action on $Z$ to be the quantity
\[\tau_Z(\mu) = \inf_{\kappa  > 0} \limsup_{m \to +\infty} \frac{1}{m} \min \bigl\{\, \log \# A \mid A \subset \Aut(Z) \text{ and } (\theta_Z)_*\mu^{*m}(A) \geq 1 - e^{-\kappa m} \,\bigr\}.\]
Clearly, if $(\theta_Z)_*\mu$ is finitely supported, 
\[\tau_Z(\mu) \leq \lim_{m \to +\infty} \frac{1}{m} \log \left(\# \Supp\left((\theta_Z)_*\mu^{*m}\right)\right).\]



Let $H \acts (X,\mes_X)$ be a probability measure preserving action of $H$ on a compact space $X$. Let $(U_X, L^2(X,\mes_X))$ denote the corresponding Koopman representation. Let $\pi \colon (X,\mes_X) \to (Y, \mes_Y)$ be a factor, i.e. $\pi_*\mes_X = \mes_Y$, $H$ acts on $(Y,\mes_Y)$ and $\pi$ is $H$-equivariant.
By composing with $\pi$, we can embed $L^2(Y,\mes_Y)$ in $L^2(X,\mes_X)$ as an $H$-invariant subspace.
Let $U_{X,Y}$ be the restriction of $U_X$ to the orthogonal complement of $L^2(Y,\mes_Y)$ in $L^2(X,\mes_X)$.
For a Borel probability measure $\mu$ on $H$, define
\[\sigma_{X,Y}(\mu) = -\lim_{m\to +\infty} \frac{1}{m} \log \norm{U_{X,Y}(\mu)^m}.\]

We say a measure  $\mu$ on $\Aff(X)$ has a \emph{finite exponential moment} if there exists $\beta > 0$ such that
\begin{equation}
\label{eq:expmoment}
\int_{\Aff(X)} \Lip_X(g)^\beta \dd \mu(g) < +\infty,
\end{equation}
where for $g \in \Aff(X)$,
\[\Lip_X(g) = \sup_{x,x' \in X,\, x \neq x'} \frac{d(gx,gx')}{d(x,x')}.\]
To keep track of the parameter $\beta$, we say more precisely that $\mu$ has a finite $\beta$-exponential moment.
\begin{thm}
\label{thm:main}
Let $\mu$ be a probability measure on $\Aff(X)$ having a finite $\beta$\dash{}exponential moment for some $\beta > 0$.
Let $\Gamma$ denote the subgroup generated by the support of $\theta_*\mu$.
Assume that there exists a rational $\Gamma$-invariant connected central subgroup $Z \subset N$ such that
\begin{equation}
\label{eq:mainassump}
\tau_Z(\mu) < 2 \sigma_{X,Y}(\mu)
\end{equation}
where $Y = N/(\Lambda Z)$ is the corresponding factor nilmanifold.

If the $\mu$-induced random walk on $Y$ satisfies $(\lambda,\alpha)$\dash{}quantitative equidistribution for some $\lambda > 0$ and $0 < \alpha \leq \min\{1,\beta\}$
then the $\mu$-induced random walk on $X$ satisfies $(\lambda',\alpha)$\dash{}quantitative equidistribution for any $\lambda' \in {(0,\lambda)}$.
\end{thm}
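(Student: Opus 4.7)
The plan is to perform a fiber Fourier decomposition over the central torus $K := Z/(Z\cap\Lambda)$, handle the zero mode via the quantitative equidistribution hypothesis on $Y$, and handle the non-zero modes via the spectral gap $\sigma_{X,Y}(\mu)$ combined with a non-concentration estimate controlled by $\tau_Z(\mu)$. Since $Z$ is connected, central, rational and $\Gamma$-invariant, $K$ acts on $X$ by translation and $\pi\colon X\to Y$ is a principal $K$-bundle. Every $f\in\Ccal^{0,\alpha}(X)$ admits a decomposition $f=\sum_{\chi\in\widehat K}f_\chi$ with $f_\chi(x)=\int_K\bar\chi(k)f(kx)\,dk$; the zero mode $f_0$ descends to a function on $Y$, while $f_\chi\in L^2(X)\ominus L^2(Y)$ for $\chi\neq 0$. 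Hölder regularity yields $\norm{f_\chi}_\infty\leq C_\alpha(1+\norm{\chi})^{-\alpha}$, so one may truncate to $\norm{\chi}\leq t^{-A}$ with a tail of size $O(t^2)$.

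Fix $m\geq C\log(1/t)$ and $f$ with $\norm{f}_{0,\alpha}\leq 1$ witnessing $\Wcal_\alpha(\mu^{*m}*\delta_x,\mes_X)>t$. After truncation the discrepancy splits as the zero-mode term plus a sum of $(U(\mu)^m f_\chi)(x)$ over non-zero $\chi$ with $\norm{\chi}\leq t^{-A}$. If the zero-mode term exceeds $t/2$, then $\Wcal_\alpha(\pi_*(\mu^{*m}*\delta_x),\mes_Y)>t/2-O(t^2)$, so the $(\lambda,\alpha)$-quantitative equidistribution on $Y$ furnishes a pair $(y',H')$ satisfying the required conclusions on $Y$; taking $x'\in\pi^{-1}(y')$ to be the unique $K$-fiber lift at distance $\leq e^{-\lambda m}$ from $x$ then works, because the height condition and the subgroup containing $\pi(H'x')-\pi(x')$ are formulated on the common torus quotient $T=N/[N,N]\Lambda$.

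The core step is to rule out the opposite case: for each non-zero $\chi$ with $\norm{\chi}\leq t^{-A}$ one must show $\abs{(U(\mu)^m f_\chi)(x)}\leq e^{-cm}$ for some $c>0$ independent of $\chi$. Split $m=m_1+m_2$ with both parts proportional to $m$ and write
\[(U(\mu)^m f_\chi)(x)=\int_{\Aff(X)}(U(\mu)^{m_2}f_\chi)(g^{-1}x)\,d\mu^{*m_1}(g).\]
The inner factor has $L^2$ norm at most $e^{-\sigma m_2}\norm{f_\chi}_2$ for any $\sigma<\sigma_{X,Y}(\mu)$, by orthogonality of $f_\chi$ to $L^2(Y)$. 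To convert this $L^2$ bound into a pointwise bound at $x$, convolve $\mu^{*m_1}*\delta_x$ with a bump $\phi_\epsilon$ supported in a $K$-ball of radius $\epsilon$; the Hölder control of $U(\mu)^{m_2}f_\chi$, which relies on the $\beta$-exponential moment of $\mu$ and $\alpha\leq\beta$, produces a truncation error of order $\epsilon^\alpha e^{Bm_2}$, while Cauchy--Schwarz bounds the main term by $e^{-\sigma m_2}\norm{\mu^{*m_1}*\delta_x*\phi_\epsilon}_2$. The crucial non-concentration step estimates this $L^2$ norm: by the definition of $\tau_Z(\mu)$, outside an event of probability $e^{-\kappa m_1}$ the walk places its mass in a union of at most $e^{\tau_Z(\mu)m_1+o(m_1)}$ cosets of the kernel of $\theta_Z$, so the $K$-smoothed density satisfies $\norm{\mu^{*m_1}*\delta_x*\phi_\epsilon}_2\leq e^{\tau_Z(\mu)m_1/2+o(m_1)}\epsilon^{-(\dim K)/2}$. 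Optimizing $\epsilon$ as a small negative power of $e^m$, the strict inequality $\tau_Z(\mu)<2\sigma_{X,Y}(\mu)$ produces a net exponential gain and yields the required pointwise decay, contradicting the assumption that non-zero modes dominate.

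The main obstacle is exactly this $L^2$-to-pointwise passage: the $K$-smoothing that upgrades the spectral estimate unavoidably absorbs a factor $e^{\tau_Z(\mu)m_1/2}$ from the spread of the walk among characters of $K$, and the strict inequality $\tau_Z(\mu)<2\sigma_{X,Y}(\mu)$ is precisely what keeps the exponent negative once $m_1$ and $m_2$ are balanced. A secondary technicality is controlling the Hölder-norm growth $e^{Bm_2}$ along the random walk, which the $\beta$-exponential moment and the hypothesis $\alpha\leq\beta$ are designed to absorb. The small loss $\lambda'<\lambda$ in the rate reflects the polynomial-in-$t^{-1}$ factors coming from the frequency truncation, Hölder distortion, and the balancing of $m_1$, $m_2$, $\epsilon$.
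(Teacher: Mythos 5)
Your overall frame (fiber Fourier decomposition over $K=Z/(Z\cap\Lambda)$, zero mode handled on $Y$, non-zero modes handled with $\sigma_{X,Y}$ and $\tau_Z$) starts like the paper's, but the core step is structurally wrong. You try to \emph{rule out} the case where the non-zero modes dominate by proving an unconditional pointwise decay $\abs{(U(\mu)^m f_\chi)(x)}\leq e^{-cm}$ for every $x$ and every non-zero $\chi$ of moderate norm. No such bound can hold: if $x$ is fixed by every element of $\Supp(\mu)$ (or has a small invariant orbit), then $\mu^{*m}*\delta_x=\delta_x$ and the non-zero modes do not decay, while the hypothesis $\tau_Z(\mu)<2\sigma_{X,Y}(\mu)$ is perfectly compatible with this situation; the conclusion of the theorem at such points must come from the exceptional branch (a nearby $x'$ whose orbit projects into a small subgroup of the torus factor), so the non-zero-mode case cannot end in a contradiction --- it has to be \emph{transferred} to $Y$. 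That is what the paper's Lemma~\ref{lm:CSargument} does: with $\eta=\mu^{*(m-m')}*\delta_x$, it partitions $\mu^{*m'}$ into the classes $P_a=\{g\mid\theta(g)^{-1}\cdot a_0=a\}$, sets $f^{(m')}_a=U^*(\mu^{(m')}_a)f\in\Hcal_a$, observes that $\absbig{f^{(m')}_a}^2$ is fiber-constant, and after Cauchy--Schwarz compares $\sum_{a\in A}\mu^{*m'}(P_a)\normbig{f^{(m')}_a}_2^2\leq\sqrt{\#A}\,\normbig{U_{X,Y}(\mu)^{m'}}\leq e^{-(\sigma-\tau/2)m'}$, producing a zero-mode witness for the \emph{earlier} distribution $\eta$, i.e.\ non-equidistribution of $\pi_*\eta$ on $Y$; only then is the $Y$-hypothesis invoked. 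Note that $\tau_Z$ enters solely as a count of the characters $\theta(g)^{-1}\cdot a_0$, via $\#A\leq e^{\tau m'}$.

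Independently of this, your $L^2$-to-pointwise passage does not work as written. Smoothing $\mu^{*m_1}*\delta_x$ only along the fiber torus $K$ leaves a measure that is singular transverse to the fibers, so it has no density in $L^2(X,\mes_X)$ and the bound ``$e^{-\sigma m_2}\norm{\mu^{*m_1}*\delta_x*\phi_\epsilon}_2$'' is not meaningful; a global $L^2(X)$ bound on $U(\mu)^{m_2}f_\chi$ says nothing about its values on the particular fibers the walk visits. Moreover the claimed estimate $\norm{\mu^{*m_1}*\delta_x*\phi_\epsilon}_2\leq e^{\tau_Z(\mu)m_1/2+o(m_1)}\epsilon^{-(\dim K)/2}$ misreads $\tau_Z$: by definition it bounds the number of distinct elements of $\Aut(Z)$ carrying most of $(\theta_Z)_*\mu^{*m_1}$, and gives no non-concentration whatsoever of the walk's position on $X$ or along fibers (in the Heisenberg case $\tau_Z=0$ while $\mu^{*m_1}*\delta_x$ can be a single atom). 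If instead you smooth in all directions, you must beat $\epsilon^{-\dim X/2}$ against the H\"older-norm growth $e^{Bm_2}$ of $U(\mu)^{m_2}f_\chi$, and no relation between the Lipschitz growth rate $B$ and $\sigma_{X,Y}(\mu)$ is assumed. Two smaller, fixable points: the tail of $\sum_{\norm{\chi}>t^{-A}}f_\chi$ is not controlled by $\norm{f_\chi}_\infty\lesssim\norm{\chi}^{-\alpha}$ alone (the series is not absolutely summable; use a Fej\'er-type kernel as in Lemma~\ref{lm:finda0}); and the maximal torus factor of $Y$ is $N/[N,N]Z\Lambda$, in general a proper quotient of $T=N/[N,N]\Lambda$, so the height statement obtained on $Y$ must be pulled back through $T\to T_Y$ (the height-comparison lemma in Section~\ref{sc:main proof}) rather than read off on a ``common torus quotient''.
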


Note that when the equivalent conditions in Theorem~\ref{thm:BekkaGuivarch} hold for $X$ and $H= \langle \Supp(\mu) \rangle$, we have $\sigma_{X,Y}(\mu) > 0$.
In some special situations, for instance, if $\theta_Z(H)$ is a virtually nilpotent group, we have easily $\tau_Z(\mu) = 0$.
Thus, in these situations, Theorem~\ref{thm:main} applies and reduces the problem of quantitative equidistribution on $X$ to whether there is one on $Y$, a nilmanifold of smaller dimension.
The idea is that this will eventually reduce to the case of random walks on a torus, where much more is known.

We believe that a result akin to Theorem~~\ref{thm:main}  should hold more generally with the assumption~\eqref{eq:mainassump} relaxed to $\sigma_{X,Y}(\mu) > 0$, with an appropriate (relative) irreducibility assumption, e.g.

\begin{conj}
Let $\mu$ be a probability measure on $\Aff(X)$ having a finite $\beta$\dash{}exponential moment for some $\beta > 0$.
Let $H$ denote the subgroup generated by $\Supp(\mu)$ and $\Gamma = \theta(H)$.
Assume that there exists a rational $\Gamma$-invariant connected central subgroup $Z \subset N$ with corresponding factor nilmanifold $Y = N/(\Lambda Z)$ so that
\begin{enumerate}
    \item $\sigma_{X,Y}(\mu)>0$. 
    \item for any finite index subgroup $H' <H$, and  any proper $H'$-invariant affine subnilmanifold $X' \subset X$, the projection of $X'$  to $Y$ is a proper affine subnilmanifold of $Y$.
    \item the $\mu$-induced random walk on $Y$ satisfies $(\lambda,\alpha)$\dash{}quantitative equidistribution for some $\lambda > 0$ and $0 < \alpha \leq \min\{1,\beta\}$.
\end{enumerate}
Then the $\mu$-induced random walk on $X$ satisfies $(\lambda',\alpha)$\dash{}quantitative equidistribution for any $\lambda' \in {(0,\lambda)}$.
\end{conj}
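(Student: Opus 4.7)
The plan is to follow the architecture of Theorem~\ref{thm:main}: induction via the central extension $Z\to X\to Y$, with a Fourier decomposition of $L^2(X)$ into $\pi^*L^2(Y)$ plus the components indexed by nontrivial characters $\chi\in\widehat Z$. The new difficulty compared with Theorem~\ref{thm:main} is the absence of the inequality $\tau_Z(\mu)<2\sigma_{X,Y}(\mu)$: a naive covering over the $\chi$-orbit is now too expensive and must be replaced by a quantitative non-concentration statement on $\widehat Z$ extracted from hypothesis~(ii).

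Fix $m\geq C\log(1/t)$, a base point $x\in X$, and $f\in\Ccal^{0,\alpha}(X)$ with $\norm{f}_{0,\alpha}\leq 1$ realising the failure $\Wcal_\alpha(\mu^{*m}*\delta_x,\mes_X)>t$. Write $f = f_Y\circ\pi + \sum_{j\geq 0} f_j$, where $f_Y$ is the fibrewise average of $f$ over $Z$ and $f_j$ is a Littlewood--Paley piece concentrated on characters of height $\sim 2^j$. Either $f_Y$ or some $f_j$ contributes at least $t/(Cj^2)$ to the defect. If $f_Y$ does, the random walk on $Y$ already fails to equidistribute from $\pi(x)$ at scale $\gtrsim t$; hypothesis~(iii) then returns a point $y'\in Y$ close to $\pi(x)$ with height-controlled orbit in the maximal torus of $Y$, and hypothesis~(ii), in its contrapositive form, lets us lift $y'$ to $x'\in X$ in a way that produces a genuine obstruction on $X$ in the sense of condition~\ref{it:aff small height}.

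If instead some $f_j$ is responsible, combine the $L^2$-bound $\norm{U_{X,Y}(\mu)^m}\leq e^{-\sigma_{X,Y}(\mu)m}$ with an \emph{effective closing lemma} on $\widehat Z$: any character $\chi$ of height $\leq e^{cm}$ whose $\theta(H)$-orbit under $\mu^{*m}$ fails to spread must lie near an $H$-invariant proper rational subtorus, which would pull back to an $H'$-invariant proper affine subnilmanifold of $X$ covering $Y$ --- ruled out by~(ii). One then converts the $L^2$ decay into a pointwise Wasserstein bound by smoothing $\delta_x$ at scale $e^{-\lambda' m}$, with the smoothing error absorbed by the Hölder regularity $\alpha\leq\beta$ and the exponential moment~\eqref{eq:expmoment}. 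Summing dyadically over $j$ up to a cutoff $j_0\sim\sigma_{X,Y}(\mu)m/C$, the tail $j>j_0$ is negligible by Hölder regularity alone, and the prefix beats $t$. The loss $\lambda-\lambda'$ accommodates these estimates.

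The main obstacle is upgrading hypothesis~(ii) from a purely qualitative to a polynomial-in-height statement, i.e. actually producing the effective closing lemma on $\widehat Z$. This seems to require either a height-gap input in the arithmetic spirit of the Appendix and its antecedents, or a mild Diophantine strengthening of~(ii) under which the conjecture is provable, from which one might then hope to recover the full conjecture by an approximation argument. A secondary, more technical issue is matching the Littlewood--Paley cutoff $j_0$ consistently with the exponential-moment exponent $\beta$ and the spectral rate $\sigma_{X,Y}(\mu)$ so that the dyadic sum converges uniformly in $t$; this is a finicky but essentially bookkeeping task and is not where the real content lies.
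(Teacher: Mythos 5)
The statement you are trying to prove is not proved in the paper at all: it is stated as a \emph{conjecture}, precisely because the authors do not know how to carry out the step you yourself flag as the ``main obstacle''. The paper's actual theorem (Theorem~\ref{thm:main}) needs the quantitative hypothesis $\tau_Z(\mu) < 2\sigma_{X,Y}(\mu)$, and its proof (Lemma~\ref{lm:CSargument}) exploits it in an essential way: with overwhelming probability two independent $\mu^{*m'}$-samples act identically on $\widehat{Z}$, so that after a Cauchy--Schwarz the product $f_a^{(m)}\overline{f_a^{(m)}}$ is fibrewise constant and becomes a new Hölder witness on $Y$, with the small covering number $e^{\tau m}$ beaten by the spectral decay $e^{-\sigma m}$. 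Your proposal replaces this by an ``effective closing lemma'' on $\widehat Z$ extracted from the purely qualitative hypothesis~(ii), but you never prove such a lemma, and you acknowledge that doing so would require a genuinely new arithmetic/height-gap input. That is not a bookkeeping issue; it is exactly the content that separates the conjecture from the theorem, so the proposal does not constitute a proof.

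Beyond that central gap, two further steps of the sketch would not go through as written. First, converting the operator bound $\norm{U_{X,Y}(\mu)^m}\leq e^{-\sigma m}$ into a bound for $\mu^{*m}*\delta_x$ by ``smoothing $\delta_x$ at scale $e^{-\lambda' m}$'' does not work in general: the smoothed measure has $L^2$-norm of size $e^{\lambda' m\dim X/2}$, which can swamp the spectral decay, and more importantly the whole point of the problem is that spectral-gap arguments control generic starting points but not arbitrary ones --- this is why the paper's mechanism produces a new test function on the base rather than trying to estimate the pushed-forward Dirac mass directly. Second, even if the dichotomy were established, the conclusion of $(\lambda',\alpha)$-quantitative equidistribution requires exhibiting the point $x'$, the perturbed group $H'$, and a proper invariant subgroup of the maximal torus factor of \emph{height at most} $t^{-C}$; your sketch produces at best a qualitative invariant object from the contrapositive of~(ii), with no control on its height, so the quantitative conclusion is not reached.
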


\subsection{The case of a torus} 
Previous works~\cite{BFLM,Boyer_Affine,He_schubert,HS,HLL} on the case of a torus have been conducted by Bourgain, Furman, Mozes, Boyer, Saxcé and the authors of the present paper.
The most general result known when this paper is written can be summarized as follows.
Recall that if $\mu$ is a Borel probability measure on $\GL_d(\R)$, the top Lyapunov exponent of $\mu$ is 
\[\lambda_{1,\R^d}(\mu) = \lim_{n \to +\infty} \frac{1}{n} \int \log \norm{g} \dd \mu^{*n}(g).\]

\begin{quotedthm}[\cite{HS,He_schubert,BFLM}; cf. Appendix~\ref{sc:Zconnected}]
\label{thm:torus}
Let $X = \R^d/\Z^d$ for some $d \geq 2$. 
Let $\mu$ be a probability measure on $\Aut(X)$ having finite exponential moment.
Denote by $\Gamma \subset \GL_d(\Z)$ the subgroup generated by $\Supp(\mu)$.

Assume that the action of $\Gamma$ on $\R^d$ is strongly irreducible. 
Then the $\mu$-induced random walk on $X$ satisfies $(\lambda,\alpha)$\dash{}quantitative equidistribution for any $\lambda$ in the range ${(0,\lambda_{1,\R^d}(\mu))}$ and any $\alpha \in {(0,1]}$.
\end{quotedthm}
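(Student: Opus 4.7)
\medskip
\noindent\textbf{Proof proposal.}
My plan is a Fourier-analytic argument of the type pioneered in \cite{BFLM} and refined in \cite{HS,He_schubert}. First I would reduce the Wasserstein bound to a Fourier statement: expanding an $\alpha$-Hölder function on $\Tbb^d$ in its Fourier series, the Jackson-type estimate $\norm{\hat f}_{\ell^\infty(\norm{a} > R)} \ll R^{-\alpha}$ shows that if $\Wcal_\alpha(\mu^{*m}*\delta_x, \mes_X) > t$ then there is some $a \in \Zbb^d \setminus \{0\}$ with $\norm{a} \leq t^{-C_0}$ satisfying
\[\absbig{\widehat{\mu^{*m}*\delta_x}(a)} = \abse{\int e^{-2\pi i \langle g^T a,\, x\rangle} \dd\mu^{*m}(g)} \geq t^{C_0}\]
for an absolute $C_0 = C_0(d,\alpha)$.

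The next step is to transfer this to the dual random walk. Writing $\nu_m$ for the push-forward of $\mu^{*m}$ on $\Zbb^d$ under $g \mapsto g^T a$, the quantity above is $\abs{\hat\nu_m(x)}$. Under strong irreducibility and the finite exponential moment, classical random matrix product theory (Furstenberg, Le~Page, Guivarc'h) guarantees exponential growth $\norm{g^T a} \asymp \norm{a}e^{\lambda_{1,\Rbb^d}(\mu) m}$ with very high probability and regularity of the Furstenberg measure on $\Pbb(\Rbb^d)$; moreover large deviation estimates control exceptional events at rate $e^{-\kappa m}$. Thus $\nu_m$ is supported, up to exponentially small mass, on vectors of norm $\asymp e^{\lambda_{1,\Rbb^d}(\mu) m}$ whose directions spread out according to the Furstenberg measure.

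The heart of the argument is a granulation / multislicing step: one shows that the only way $\abs{\hat\nu_m(x)}$ can be as large as $t^{C_0}$ is if $x$ admits a rational approximation of small denominator. I would follow the bootstrapping scheme of He--de~Saxc\'e: assuming $\abs{\hat\nu_m(x)} \geq t^{C_0}$, one deduces via $L^2$-flattening and the Plancherel identity on $\Zbb^d$ that $\nu_{m/2} * \check\nu_{m/2}$ has a large Fourier coefficient, which combined with the spectral gap on $\Pbb(\Rbb^d)$ implies concentration of $\nu_{m/2}$ on a thin neighbourhood of a proper affine subspace of $\Rbb^d$. The multislicing / discretised projection theorem of \cite{He_schubert} then forces an arithmetic structure incompatible with the strong irreducibility of $\Gamma$, unless $x$ itself is close to a rational. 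Iterating through dyadic scales (with the $\lambda < \lambda_{1,\Rbb^d}(\mu)$ slack absorbing the loss at each scale) produces $x' \in \Qbb^d/\Zbb^d$ with $d(x,x') \leq e^{-\lambda m}$ and denominator at most $t^{-C}$. Since $\Gamma$ acts irreducibly on $\Rbb^d$, $\pi(\Gamma x') = \Gamma x'$ is a finite set of rationals of bounded denominator, which is contained in a finite $\Gamma$-invariant subgroup of height $\leq t^{-C'}$, yielding condition (ii').

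The main obstacle is the multislicing step, which requires the full strength of the discretised sum-product / projection machinery and its extension to products of elements in $\GL_d(\Rbb)$; handling general (non--Zariski-connected) $\Gamma$ as in the appendix requires controlling how the random walk distributes among the cosets of the Zariski identity component, and maintaining the large-deviation estimates uniformly across these components. Once this is in place, the combinatorial bookkeeping (choice of scales, propagation of the constant $C$) is routine.
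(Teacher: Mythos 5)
Your overall strategy --- reduce the Wasserstein bound to one large Fourier coefficient of $\mu^{*m}*\delta_x$ via a Fej\'er/Jackson argument, then run the BFLM/He--de Saxc\'e Fourier-decay and granulation machinery to force a rational approximation $x'$ with $d(x,x')\le e^{-\lambda m}$ and denominator $\le t^{-C}$, and finally use irreducibility to pass to condition (ii') --- is the same route the paper takes (Lemma~\ref{lm:finda0} together with Theorem~\ref{thm:aut,base} in Appendix~\ref{sc:Zconnected}). However, there is a genuine gap at exactly the point where the statement here goes beyond \cite{HS}: the absence of any Zariski-connectedness assumption on $\Gamma$. You flag this in one sentence, but the issue is not just ``maintaining large-deviation estimates uniformly across the components''. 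The non-concentration hypotheses feeding the discretised sum--product / Fourier-decay step are formulated inside the subalgebra $E\subset M_d(\R)$ generated by the identity component $G^\circ$ of the Zariski closure; when $G$ is disconnected, $(\theta_*\mu)^{*m}$ spreads over the cosets $\gamma_jE$ and therefore \emph{does} concentrate on proper affine subvarieties of the algebra generated by $\Gamma$, so the key input \cite[Theorem 2.1]{HS} (or your ``multislicing'' step) cannot be applied to $\mu^{*m}$ as is. Your sketch, as written, only reproves the connected case.

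The paper's mechanism for closing this gap is specific and is missing from your proposal: decompose $\mu^{*n}$ according to the return times $\tau(jm)$ of the walk to $G^\circ$, writing it (up to exponentially small mass) as a convex combination of multiplicative convolutions $\mu^{*k}*\nu_{l_s}*\dotsm*\nu_{l_1}$, where each conditioned measure $\nu_{l}$ lives on $\Gamma\cap G^\circ$ and, after rescaling by $e^{-T\lambda_1 m}$, inherits the non-concentration property $\NC(\epsilon,\kappa,\tau)$ in $E$; this uses Aoun's results that the induced measure $\mu^\circ$ keeps a finite exponential moment, has top exponent $T\lambda_1$, and that $\tau(m)$ satisfies exponential large deviations. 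Because the factors $\nu_{l_1},\dotsc,\nu_{l_s}$ are now distinct measures, one also needs the upgrade of \cite[Theorem 2.1]{HS} to convolutions of different measures (Theorem~\ref{thm:fourier}), proved via a polarization trick from \cite{BD}; and the resulting decay (Theorem~\ref{thm:decaymun}) holds only for frequencies $\xi\in E^*$ with the integral restricted to a single coset $\gamma_jE$, which forces a corresponding coset decomposition of $\widehat{\mu^{*n}*\delta_x}(a)$ before the H\"older/granulation step. Without this (or some substitute), your plan does not prove the stated theorem; with it, the remainder of your outline matches the paper, which from the granulation proposition on follows \cite{HS} verbatim.
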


Note that the assumption that $\Gamma \acts \R^d$ is strongly irreducible implies that $X$ itself is the only non-trivial $\Gamma$-invariant factor torus.
Thus, a proper $\Gamma$\dash{}invariant closed subgroup of $X$ is a finite set of rational points. Its height controls the size of denominators.

\begin{quotedthm}[\cite{HLL,Boyer_Affine}; cf. Appendix~\ref{sc:Zconnected}]
\label{thm:affine torus}
Let $X = \R^d/\Z^d$ for some $d \geq 2$. 
Let $\mu$ be a finitely supported probability measure on $\Aff(X)$.
Denote by $\Gamma \subset \GL_d(\Z)$ the subgroup generated by $\Supp(\theta_*\mu)$.

Assume that the action of $\Gamma$ on $\R^d$ is strongly irreducible. 
Then given $\lambda \in {(0,\lambda_{1,\R^d}(\theta_*\mu))}$ and $\alpha \in {(0,1]}$, there exists $C = C(\theta_*\mu,\lambda,\alpha)$ such that
the $\mu$-induced random walk on $X$ satisfies $(C,\lambda,\alpha)$\dash{}quantitative equidistribution.
\end{quotedthm}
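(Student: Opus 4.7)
The plan is to adapt the Fourier-analytic strategy of Bourgain--Furman--Mozes--Lindenstrauss for the linear torus, invoking Theorem~\ref{thm:torus} as a black box where possible and tracking the translation cocycle where it is not. First I would reduce equidistribution to Fourier coefficients: from $\Wcal_\alpha(\mu^{*m}*\delta_x,\mes_X) > t$, approximating Hölder functions by trigonometric polynomials produces a nonzero $a \in \Z^d$ with $\norm{a} \leq t^{-O_\alpha(1)}$ such that $\bigl|\widehat{\mu^{*m}*\delta_x}(a)\bigr| \geq t^{O_\alpha(1)}$. Writing each $g = (A_g,v_g) \in \Aff(X)$ and iterating, the Fourier coefficient reads
\[\widehat{\mu^{*m}*\delta_x}(a) = \int e\bigl(\langle A_g^T a, x\rangle + \langle a, c_g\rangle\bigr) \dd \mu^{*m}(g),\]
with $c_g$ the accumulated translation cocycle. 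A Cauchy--Schwarz step, splitting the word into a prefix $g$ of length $k$ and a suffix $h$ of length $m-k \approx m/2$, yields
\[\bigl|\widehat{\mu^{*m}*\delta_x}(a)\bigr|^2 \leq \int \bigl|\widehat{\mu^{*k}*\delta_{h\cdot x}}\bigl(A_h^T a\bigr)\bigr|^2 \dd \mu^{*(m-k)}(h).\]
By the Oseledets theorem and large deviations for $\theta_*\mu$, for typical $h$ the dual frequency $b := A_h^T a$ has norm $\approx e^{(\lambda_{1,\R^d}(\theta_*\mu)-\epsilon)(m-k)}$ and direction close to the Furstenberg measure on $\mathbb{P}^{d-1}$.

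Next, I would absorb the residual translation phase inside the inner Fourier coefficient into an explicit affine shift of the base point: writing the cocycle contribution as a phase $e(\langle b, w_h\rangle)$ and a translation $y := h\cdot x \mapsto y + w'_h$, the inner coefficient reduces in modulus to the Fourier coefficient of the \emph{linear} random walk $(\theta_*\mu)^{*k}*\delta_{y+w'_h}$ at frequency $b$. Theorem~\ref{thm:torus} then applies: either this coefficient is $\leq e^{-\lambda k}$, which combined with the Cauchy--Schwarz inequality yields the desired bound on $|\widehat{\mu^{*m}*\delta_x}(a)|$, or the shifted point $y+w'_h$ lies within $e^{-\lambda k}$ of a rational point of denominator $\leq \norm{b}^{O(1)} \leq t^{-O(1)}$.

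The main obstacle, and the genuinely new input beyond the linear case, is to convert this $h$-parametrized family of rational points into a single rational coset near the original $x$, together with a perturbed subgroup $H' \subset \Aff(X)$ whose generators lie within $e^{-\lambda' m}$ of those of $H = \langle \Supp(\mu)\rangle$. Strong irreducibility of $\Gamma$ on $\R^d$ forces the only proper $\Gamma$-invariant closed subgroups of $X$ to be finite sets of rational points, so a pigeonhole argument on the positive-measure set of typical $h$ selects a single finite $\Gamma$-orbit $F \subset X$ and a point $x' \in F$ with $d(x,x') \leq e^{-\lambda' m}$. Reverse-engineering the translations: for each $g \in \Supp(\mu)$, the near-invariance of $F$ under the prefix walks forces $g \cdot x'$ to lie within $e^{-\lambda' m}$ of $F$, so $g$ admits an $e^{-\lambda' m}$-translation perturbation $\tilde g$ mapping $x'$ exactly into $F$; the closed group $H'$ generated by $\{\tilde g : g \in \Supp(\mu)\}$ then satisfies $\theta(H') = \theta(H)$ and $H' x' \subset F$, so that $\pi(H' x') - \pi(x')$ is a set of rational points of denominator $\leq t^{-C}$, which is condition (iii'). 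Compiling the polynomial losses in the three Fourier-analytic steps, and using that any $\lambda' < \lambda < \lambda_{1,\R^d}(\theta_*\mu)$ absorbs them into the exponential rate, produces the constant $C = C(\theta_*\mu,\lambda,\alpha)$ in the statement.
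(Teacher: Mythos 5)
The central step of your reduction does not work: you claim that the translation phase inside the inner Fourier coefficient can be ``absorbed into an explicit affine shift of the base point'', so that $\bigl|\widehat{\mu^{*k}*\delta_{y}}(b)\bigr|$ becomes a Fourier coefficient of the \emph{linear} walk $(\theta_*\mu)^{*k}*\delta_{y+w'}$. For a word $g$ of length $k$ with linear part $A_g$ and accumulated cocycle $c_g$, the phase is $e(\langle b, A_g y + c_g\rangle)$, and writing $A_g y + c_g = A_g(y+w')$ for a single $w'$ independent of $g$ would require $c_g = A_g w'$ for all $g$ in the support of $\mu^{*k}$, i.e.\ a common fixed point of the affine maps --- exactly the degenerate case. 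In general the cocycle varies with the word and cannot be traded for a shift of the base point, so Theorem~\ref{thm:torus} cannot be invoked as a black box; this is precisely why the affine case is a separate theorem (\cite{HLL}) rather than a corollary of the linear one. The way the translation parts are actually handled (in \cite{HLL} and in Appendix~\ref{sc:Zconnected} here) is different: one raises the outer integral to a $2k$-th power by H\"older, expands, and observes that the resulting frequency involves \emph{differences} $A_{g_1}+\dots+A_{g_k}-A_{g_{k+1}}-\dots-A_{g_{2k}}$ of linear parts while the outer translation phases are eliminated by taking absolute values inside; what remains is a non-concentration/Fourier-decay statement for the linear parts only, and the subsequent structural analysis (producing $x'$, the perturbed group $H'$ and the finite orbit) is the main content of \cite{HLL}, not a pigeonhole afterthought. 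Your Cauchy--Schwarz display also mixes up the two splittings: conditioning on the suffix $h$ gives the inner coefficient at frequency $a$, not $A_h^{T}a$; the frequency gets hit by $A_h^{T}$ only if $h$ is the \emph{outer} word, in which case the object carrying the cocycle is the outer integral and the same absorption problem reappears there.

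A second omission: the paper quotes this statement from \cite{HLL,Boyer_Affine} and its own contribution is the removal of the Zariski-connectedness hypothesis on $\Gamma$ (Appendix~\ref{sc:Zconnected}), done via return times to $G^\circ$ and the Fourier decay estimate for multiplicative convolutions of distinct measures (Theorem~\ref{thm:fourier}, Theorem~\ref{thm:decaymun}), after which the argument of \cite{HLL} is followed verbatim; this also explains why $C$ depends only on $\theta_*\mu$ and not on the translation parts. Your proposal never addresses the disconnected case, and implicitly assumes the linear theorem already covers it. So even granting the reduction you attempt, the proof would be incomplete for the statement as formulated here.
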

\noindent Note that the constant $C$ depends only on $\theta_*\mu$ and not on the translation part of the elements in $\Supp(\mu)$.

Note that the statements of Theorem~\ref{thm:torus} and Theorem~\ref{thm:affine torus} are somewhat stronger than those in \cite{HS} and \cite{HLL}, in that there is no assumption that the Zariski-closure of $\Gamma$ is connected. In Appendix~\ref{sc:Zconnected} we explain how this assumption can be eliminated.

\subsection{Consequences of the main theorem}
Assume that there is a filtration 
\[1 = Z_0 \subset Z_1 \subset \dots \subset Z_{l-1} \subset Z_l = N\]
of rational closed connected subgroups such that $Z_k/Z_{k-1}$ is central in $N/Z_{k-1}$ for all $k = 1, \dotsc, l$.
Denote $X_k = N/(Z_k\Lambda)$ for $k = 0, \dotsc, l-1$. Thus, we have a tower of nilmanifolds,
\[X = X_0 \to X_1 \to \dots \to X_{l-1} = Z_l/(Z_{l-1}\Lambda)\]
where the last nilmanifold is a torus. 
Theorem~\ref{thm:main} combined with Theorem~\ref{thm:torus} immediately leads to the following statement.
\begin{thm}
\label{thm:tower}
Let $\mu$ be a probability measure on $\Aut(X)$ having a finite exponential moment or a finitely supported probability measure on $\Aff(X)$.
Let $\Gamma$ denote the subgroup of $\Aut(X)$ generated by the support of $\theta_*\mu$.
Assume 
\begin{enumerate}
\item\label{it:cond gamma inv} For all $k = 1, \dotsc, l-1$, $Z_k$ is  $\Gamma$-invariant;
\item\label{it:cond tau sigma} For all $k = 1, \dotsc, l-1$, $\tau_{Z_k/Z_{k-1}}(\mu) < 2 \sigma_{X_{k-1},X_k}(\mu)$;
\item\label{it:cond SI}   The action of $\Gamma $ on $Z_l/Z_{l-1}$ is strongly irreducible.
\end{enumerate}
Then the $\mu$-random walk on $X$ is $(\lambda,\alpha)$\dash{}quantitatively equidistributed for any $\lambda \in {(0,\lambda_{1,Z_l/Z_{l-1}}(\mu))}$ and any $\alpha \in {(0,1]}$.
\end{thm}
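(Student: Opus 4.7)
The natural strategy is downward induction along the tower $X = X_0 \to X_1 \to \dots \to X_{l-1}$: the base case $X_{l-1}$ is a torus handled by Theorem~\ref{thm:torus} (resp.\ Theorem~\ref{thm:affine torus} in the affine case), and each step back up the tower is a single application of Theorem~\ref{thm:main}. Since each $Z_k$ is rational and $\Gamma$\dash{}invariant, the measure $\mu$ descends to a measure on $\Aut(X_k)$ (resp.\ $\Aff(X_k)$) at every floor, and the finite exponential moment (resp.\ finite support) condition descends with it, because the Lipschitz constant on the quotient is bounded above by that on $X$. For the base case, $X_{l-1} = Z_l/(Z_{l-1}\Lambda)$ is a torus by rationality of $Z_{l-1}$, and by hypothesis~\ref{it:cond SI} the action of $\Gamma$ on $Z_l/Z_{l-1}$ is strongly irreducible; so Theorem~\ref{thm:torus} (or Theorem~\ref{thm:affine torus}) yields $(\lambda_{l-1},\alpha)$\dash{}quantitative equidistribution on $X_{l-1}$ for every $\lambda_{l-1} \in {(0, \lambda_{1,Z_l/Z_{l-1}}(\mu))}$ and every $\alpha \in (0,1]$.

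For the inductive step, suppose $X_k$ already enjoys $(\lambda_k,\alpha)$\dash{}quantitative equidistribution. Set $\tilde{N} = N/Z_{k-1}$, $\tilde{\Lambda} = \Lambda Z_{k-1}/Z_{k-1}$ and $\tilde{Z} = Z_k/Z_{k-1}$, so that $X_{k-1} = \tilde{N}/\tilde{\Lambda}$. By the defining property of the filtration, $\tilde{Z}$ is a connected rational central subgroup of $\tilde{N}$; hypothesis~\ref{it:cond gamma inv} makes it $\Gamma$\dash{}invariant; and the associated factor nilmanifold $\tilde{N}/(\tilde{Z}\tilde{\Lambda})$ is canonically identified with $X_k$. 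Hypothesis~\ref{it:cond tau sigma} supplies precisely the key inequality $\tau_{\tilde{Z}}(\mu) < 2\sigma_{X_{k-1},X_k}(\mu)$ required by Theorem~\ref{thm:main}. Applying that theorem therefore upgrades the equidistribution on $X_k$ to $(\lambda_{k-1},\alpha)$\dash{}quantitative equidistribution on $X_{k-1}$ for any $\lambda_{k-1} < \lambda_k$.

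Given any target rate $\lambda \in {(0, \lambda_{1,Z_l/Z_{l-1}}(\mu))}$ and any $\alpha \in (0,1]$, I would then choose a decreasing sequence $\lambda_{1,Z_l/Z_{l-1}}(\mu) > \lambda_{l-1} > \dots > \lambda_0 = \lambda$ and iterate the inductive step $l-1$ times to conclude. I do not anticipate any substantive obstacle: Theorem~\ref{thm:main} does all the real work, and the hypotheses of the tower have been set up precisely so that its assumptions are met at every floor. The remaining verifications (centrality of each $\tilde{Z}$ in $\tilde{N}$, descent of the moment/support condition, and the identification of the factor nilmanifolds) are routine and purely structural; the only mild care needed is in choosing the intermediate rates $\lambda_k$ so that the finitely many losses accumulated by the "any $\lambda' < \lambda$" clause of Theorem~\ref{thm:main} still land above the chosen target~$\lambda$.
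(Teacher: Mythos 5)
Your proposal is correct and follows essentially the same route as the paper: the base case on the torus $X_{l-1}$ via Theorem~\ref{thm:torus} (or Theorem~\ref{thm:affine torus}), then $l-1$ successive applications of Theorem~\ref{thm:main} up the tower, with the exponential moment descending under the smooth quotient maps and the intermediate rates $\lambda_k$ chosen to absorb the $\lambda'<\lambda$ losses. No further comment is needed.
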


Note that unlike in \cite[Theorem 1.3]{HLL} (cf. Theorem~\ref{thm:affine torus}), here the implicit constant $C$ of the $(\lambda,\alpha)$\dash{}quantitative equidistribution does depend on the translation part, though mildly. For more details about the dependence of the implicit constants on the translation part see Lemma~\ref{lm:CSargument}\footnote{Lemma~\ref{lm:CSargument} contains the key inductive step, combined with \cite[Theorem 1.3]{HLL} one can easily see how the constants in Theorem~\ref{thm:tower} depend on the translation parts. As this is not particularly illuminating we do not give an explicit discussion here.}.

From this quantitative statement, i.e.\ Theorem~\ref{thm:tower}, we can deduce easily the following qualitative statement.
\begin{coro}
\label{cor:qualitative main}
Let $\mu$ be either probability measure on $\Aut(X)$ with finite exponential moment or a Borel probability measure on $\Aff(X)$ with finite support. 
Let $\Gamma$ denote the subgroup generated by $\Supp(\theta_*\mu)$,
Let $H$ denote the subgroup generated by $\Supp(\mu)$,

Assume the same assumptions as in Theorem~\ref{thm:tower}.
Then for any $x \in X$, either $\mu^{*m}*\delta_x$ converge to  $\mes_X$ in the weak-$*$ topology or the projection of the orbit $Hx$ to the maximal torus factor is contained in a proper closed $H$-invariant subset. 
\end{coro}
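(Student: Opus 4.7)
The plan is to argue by contradiction: suppose $\mu^{*m}*\delta_x$ does not converge to $\mes_X$ weakly. Since $\mathcal{C}^{0,\alpha}(X)$ is dense in $C(X)$ with respect to the uniform norm and the space of Borel probability measures on the compact space $X$ is metrized by Wasserstein distance, there exist some $t \in (0,\tfrac{1}{2})$ and a subsequence $m_k \to +\infty$ with $\Wcal_\alpha(\mu^{*m_k}*\delta_x, \mes_X) > t$ for every $k$. Discarding finitely many terms we may assume $m_k \geq C\log\tfrac{1}{t}$, where $C$ is the implicit constant coming from Theorem~\ref{thm:tower}.

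For each $k$, Theorem~\ref{thm:tower} applied at time $m_k$ supplies a point $x'_k \in X$ with $d(x,x'_k) \leq e^{-\lambda m_k}$ together with, in the affine case, a closed subgroup $H'_k \subset \Aff(X)$ such that $\pi(H'_k x'_k) - \pi(x'_k)$ is contained in a proper closed $\theta(H'_k)$-invariant subgroup $L_k$ of $T$ of height $\leq t^{-C}$ (in the automorphism case take $H'_k = \Gamma$). Since $\theta(H'_k) = \Gamma$ and a closed subgroup of $T = \R^{d}/\Z^{d}$ is determined by its dual $L_k^* \subset \Z^d$, and since there are only finitely many subgroups of $\Z^d$ generated by integer vectors of norm $\leq t^{-C}$, the pigeonhole principle allows us to pass to a further subsequence along which $L_k$ is a single proper closed $\Gamma$-invariant subgroup $L$ of $T$.

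It remains to pass to the limit. Fix any element $h \in H$ written as a word $h = g_1 \cdots g_n$ with $g_i \in \Supp(\mu)$. In the affine case, for each $k$ pick $h_{i,k} \in H'_k$ with $d(g_i,h_{i,k}) \leq e^{-\lambda m_k}$ (such elements exist by condition~(ii) of the definition of quantitative equidistribution); then $h_k := h_{1,k}\cdots h_{n,k} \in H'_k$ and $h_k \to h$ in $\Aff(X)$, while $x'_k \to x$ in $X$. By continuity of the affine action, $h_k x'_k \to h x$, and hence
\[\pi(hx) - \pi(x) = \lim_{k\to\infty}\bigl(\pi(h_k x'_k) - \pi(x'_k)\bigr) \in L.\]
Therefore $\pi(Hx) \subset \pi(x) + L$. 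One checks directly that $\pi(x)+L$ is $H$-invariant: for $h\in H$ and $v\in L$, $h\cdot(\pi(x)+v) = \pi(hx) + \theta(h)v \in \pi(x)+L$, using both that $\pi(hx)-\pi(x)\in L$ and that $L$ is $\Gamma$-invariant. Since $L$ is a proper subgroup of $T$, $\pi(x)+L$ is a proper closed $H$-invariant subset of $T$, as required.

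The only subtle point is the affine-case limit argument above: one must ensure that products of perturbations in $H'_k$ still converge to the corresponding word in $H$, which is immediate from joint continuity of multiplication in the Lie group $\Aff(X)$ together with the fact that each individual factor converges. The rest is a straightforward compactness and pigeonhole argument, so no further obstacles are anticipated.
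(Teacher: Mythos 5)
Your argument is correct and follows essentially the same route as the paper's proof: assume non-convergence, extract a subsequence of times with $\Wcal_\alpha(\mu^{*m_k}*\delta_x,\mes_X)>t$, apply the quantitative equidistribution to get the data $(x'_k,H'_k,L_k)$, pigeonhole among the finitely many closed subgroups of height $\leq t^{-C}$, and pass to the limit. The only cosmetic point is that general elements of $H$ are words in $\Supp(\mu)$ \emph{and} inverses, which your limit argument handles verbatim since each $H'_k$ is a group and inversion is continuous (alternatively, as in the paper, treating only the generators suffices, because $\pi(gx)+L=\pi(x)+L$ for each generator $g$ already forces invariance of the coset under all of $H$).
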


Clearly, the two options in Corollary~\ref{cor:qualitative main} are mutually exclusive.

\medskip

From Corollary~\ref{cor:qualitative main} follow easily the following classification theorem about orbit closures and stationary measures:
if $\mu$ and $\Gamma$ are as in in Theorem~\ref{thm:tower}, then a $\Gamma$-orbit closure is either $X$ or projects to a proper closed $\Gamma$-invariant subset on the maximal torus factor.
Similarly, an ergodic $\mu$-stationary measure on $X$ is either $\mes_X$ or supported on a proper closed invariant subset. 
However, these classification theorems can be deduced from the work of Benoist and Quint~\cite{BenoistQuintIII} and the work of Eskin and the third-named author~\cite{EL}, works that deal with the much more general context of random walks on homogeneous spaces.
For instance \cite[Corollary 1.10]{BenoistQuintIII} states as follows.
In the case of automorphism action, if $\Gamma$ is a finitely generated subgroup of $\Aut(X)$, whose Zariski closure in $\Aut(N)$ is a Zariski connected semisimple subgroup with no compact factor, then every $\Gamma$-orbit closure $\overline{\Gamma x}$ is a finite homogeneous union of affine submanifolds.
If moreover $\mu$ is a probability measure $\Gamma$ whose support generates $\Gamma$, then 
the Ces\`aro mean $\frac{1}{n}\sum_{m=1}^{n}\mu^{*m}*\delta_x$ converges in the weak-$*$ topology to the homogeneous measure probability measure on $\overline{\Gamma x}$ (the measure induced by the Haar measure on the stabilizer of $\overline{\Gamma x}$).
In~\cite{EL}, the requirement on semisimplicity is relaxed. However, there does not seem to be at present a purely ergodic theoretic approach to Corollary~\ref{cor:qualitative main} without the additional Ces\`aro mean.
\subsection{Idea of the proof} To conclude this introduction, let us explain the conceptual ideas behind the proof of the main theorem. Let $f$ be a test function on $X$ which witnesses $\Wcal_\alpha(\mu^{*m}*\delta_x, \mes_X) > t$.
The goal is to construct another witness which has the additional property that it is constant on each fiber of $\pi \colon X \to Y$.
Using Fourier analysis on the fibers (i.e. the $Z$-direction), we may assume without loss of generality that $f$ behaves as a character on the fibers. 
Now sample random elements $g_1$ and $g_2$ in $\Aff(X)$ according to $\mu^{*m'}$ with some $m' \leq m$ and set $f_1 = f\circ g_1$ and $f_2 = f\circ g_2$.
Because $\tau_Z(\mu)$ is small, with large probability, $\theta_Z(g_1) = \theta_Z(g_2)$ so that  $f_1\overline{f_2}$ is constant on each fiber. And because $\sigma_{X,Y}(\mu)$ is large, $f_1\overline{f_2}$ is a witness to $\Wcal_\alpha(\mu^{*(m-m')}*\delta_x, \mes_X) > t^{O(1)}$ also with large probability.

\subsection*{Acknowledgements}
The proof in Appendix~\ref{sc:Zconnected} grew out of a discussion together with Nicolas de Saxcé.
We are grateful to him for sharing his ideas.

This paper is dedicated to the memory of Jean Bourgain, a great man and a profound mathematician, whose deep work laid the framework for much that is done in this paper. In particular, much of what the third named author knows about arithmetic combinatorics he learned from Jean Bourgain. While working on this paper, we have been acutely aware of him being no longer with us --- no doubt if we could have discussed these questions with him we could have gone much further.

\section{Examples}
\label{sc:examples}
This section is devoted to a few concrete examples where our main theorem applies and an example where it does not apply.

\subsection{Heisenberg nilmanifold}
Let $N$ be the $(2d+1)$-dimensional Heisenberg group.
Recall that a Heisenberg group is a two-step connected simply-connected nilpotent Lie group of one dimensional center.
Let $Z$ denote the center of $N$. Note that $[N,N] = Z$ and it is isomorphic to $\R$. 

Let $\Lambda$ be a lattice in $N$ and set $X = N/\Lambda$. 
The maximal factor torus of $X$ is $T = N/[N,N]\Lambda = N/Z\Lambda$.
Let $\mu$ be a Borel probability measure on $\Aff(X)$ with finite support.
Let $H$ denote the subgroup generated by $\Supp(\mu)$ and $\Gamma = \theta(H)$. 
Assume that 
\begin{equation}
\label{eq:HeisSI}
\text{the action of $\Gamma$ on $N/Z$ is strongly irreducible.}
\end{equation}
We claim that the assumptions of Theorem~\ref{thm:tower} are satisfied for the filtration $\{0\} \subset Z \subset N$. 
Indeed, $\{0\} \subset Z \subset N$ is the ascending central series of $N$.
Hence the assumptions on the filtration are satisfied.
It remains to see that $\tau_Z(\mu) < 2 \sigma_{X,T}(\mu)$.
On the one hand, \eqref{eq:HeisSI} implies the condition \ref{it:nofactorus} of Theorem~\ref{thm:BekkaGuivarch}. 
Hence the action of $\Gamma$ on $X$ has a spectral gap, which implies $\sigma_{X,T}(\mu) > 0$ (we remark that the special case of Theorem~\ref{thm:BekkaGuivarch} we use here for Heisenberg nilmanifolds was established by Bekka and Heu in~\cite{BekkaHeu}).
On the other hand, any $\gamma \in \Aut(X)$ preserves both $Z$ and the lattice $Z \cap \Lambda$ in $Z$.
Hence the action of $\Aut(X)$ on $Z$ consists only of $\{ \pm 1\}$.
It follows that $\tau_{Z}(\mu) = 0$, establishing condition \ref{it:cond tau sigma} of Theorems~\ref{thm:tower}, and hence Theorems~\ref{thm:tower} applies to Heisenberg nilmanifolds.

Qualitatively, we can say a little bit more than Corollary~\ref{cor:qualitative main}.
\begin{thm}
Let $X$ be a Heisenberg nilmanifold, $\mu$ a probability measure on $\Aut(X)$ having a finite exponential moment and let $\Gamma$ denote the subgroup generated by $\Supp(\mu)$. Assume the irreducibility condition~\eqref{eq:HeisSI} holds.
Then for every $x \in X$, either $\mu^{*n}*\delta_x$ converges to $\mes_X$ in the weak-$*$ topology or the $\Gamma$-orbit of $x$ is finite.
\end{thm}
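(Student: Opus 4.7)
The plan is to apply Corollary~\ref{cor:qualitative main} (which, as verified earlier in this section, holds for Heisenberg nilmanifolds under the irreducibility condition~\eqref{eq:HeisSI}) and then strengthen its conclusion by exploiting the rational structure of automorphisms of the Heisenberg group. The corollary supplies the dichotomy: either $\mu^{*n}*\delta_x \to \mes_X$ weakly, or the projection $\pi(\Gamma x)$ to the maximal torus factor $T = N/Z\Lambda$ lies in a proper closed $\Gamma$-invariant subset of $T$. Assuming the second alternative, the goal is to show $\Gamma x$ is finite.

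First, I would argue that in this alternative $\pi(x)$ must be a rational point. By~\eqref{eq:HeisSI} the action of $\Gamma$ on $T$ is strongly irreducible, so Theorem~\ref{thm:torus} applies to the induced random walk on $T$. For an irrational $y \in T$ and any fixed $t>0$, the positive distance from $y$ to the finite set of rational points of denominator $\leq t^{-C}$ implies, via the contrapositive of the quantitative equidistribution on $T$, that $\Wcal_\alpha(\mu^{*m}*\delta_y, \mes_T) \leq t$ for all $m$ large enough; letting $t \to 0$ gives $\mu^{*m}*\delta_y \to \mes_T$ weakly, contradicting the existence of a proper closed $\Gamma$-invariant subset containing $\Gamma y$. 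Hence $\pi(x)$ is rational of some denominator $q$, and because $\Gamma$ acts on $T$ by integer matrices the orbit $\pi(\Gamma x)$ is finite.

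Next, pass to the finite-index subgroup $\Gamma_0 \subset \Gamma$ stabilizing the point $\pi(x) \in T$, and then to the subgroup $\Gamma_1 \subset \Gamma_0$ of index at most two consisting of those $\gamma$ with $\gamma|_Z = +1$ (using that $\Aut(X)$ acts on $Z$ by $\{\pm 1\}$ only). Every $\gamma \in \Gamma_1$ preserves the fiber $F = \pi^{-1}(\pi(x)) \cong Z/(Z \cap \Lambda)$ setwise and commutes with the natural translation action of $Z/(Z\cap\Lambda)$ on $F$; hence $\gamma$ acts on $F$ by a translation $y \mapsto y + z_\gamma$ for a unique $z_\gamma \in \R/\Z$, and the map $\phi \colon \Gamma_1 \to \R/\Z$, $\gamma \mapsto z_\gamma$, is a group homomorphism. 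Since $\Gamma_1 \cdot x = \phi(\Gamma_1) + x$ as subsets of $F$, it suffices to show $\phi(\Gamma_1)$ is finite.

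The core arithmetic step is to prove that $\phi(\Gamma_1) \subset \tfrac{1}{Q}\Z/\Z$ for some $Q$ depending only on $q$. Writing any $\gamma \in \Aut(N)$ with $\gamma|_Z = +1$ in suitable coordinates as $\gamma(v, z) = (Av, z + c(v))$, preservation of $\Lambda$ forces the linear part $A$ to be integral and (anti-)symplectic with respect to the symplectic form $\omega$ on $N/Z$, and the cocycle $c$ to take integer values on the integer sublattice of $N/Z$. A short computation in $N$ then yields $z_\gamma \equiv c(v) - \tfrac{1}{2}\omega(v, Av - v) \pmod{1}$ for any rational lift $v \in \tfrac{1}{q}\Z^{2d}$ of $\pi(x)$, which by the integrality of $A$ and $c$ lies in $\tfrac{1}{2q^2}\Z/\Z$, uniformly in $\gamma$. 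I expect this uniform bound -- careful bookkeeping of the cocycle structure of a general Heisenberg automorphism -- to be the main obstacle. Combined with $[\Gamma : \Gamma_1] < \infty$, this shows that $\Gamma x$ is finite, as desired.
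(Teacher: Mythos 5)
Your proposal is correct and follows essentially the same route as the paper: invoke Corollary~\ref{cor:qualitative main}, reduce to showing that a point whose projection to $T=N/Z\Lambda$ is rational has finite orbit, and then exploit the explicit form $(v,z)\mapsto(A_\gamma v,\epsilon_\gamma z+L_\gamma v)$ of Heisenberg automorphisms together with the rational structure of $\Lambda$ to see the orbit lies in a rational grid (your fiber-translation homomorphism $\phi\colon\Gamma_1\to\R/\Z$ is just a repackaging of this, and your rationality-of-$\pi(x)$ step via Theorem~\ref{thm:torus} is a valid alternative to the paper's terser reduction). One small repair in the arithmetic step you flagged as the main obstacle: preservation of $\Lambda$ does not make the cocycle $c=L_\gamma$ integer-valued on $\Z^{2d}$ in general, only rational with denominator bounded by the $q$ for which $\Lambda\subset\Z^{2d}\oplus\frac{1}{q}\Z$ (the Corwin--Greenleaf input the paper cites), and your formula for $z_\gamma$ should also carry the central coordinate $u\in\frac{1}{q}\Z$ of the lattice element used to return $A_\gamma v$ to $v$; both corrections only change the value of $Q$ and leave the finiteness conclusion intact.
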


\begin{proof}
By the discussion above, Corollary~\ref{cor:qualitative main} applies.
Thus, it is enough to see that if the image of $x$ in $T = N/Z\Lambda$ is rational then the $\Aut(X)$-orbit of $x$ is finite.

By \cite[Theorem 5.1.8]{CorwinGreenleaf}, the $\Q$-span of $\log(\Gamma)$ is a $\Q$-structure of the Lie algebra $\Lie(N)$ of $N$.
We can choose a basis of this $\Q$-structure and identify both $\Lie(N)$ and $N$ with $\R^{2d+ 1} =  \R^{2d} \oplus Z$ so that the projection of $\Lambda$ to $\R^{2d}$ is exactly $\Z^{2d}$.
Then every automorphism $\gamma \in \Aut(X)$ is of the form
\[(y,t) \in \R^{2d} \oplus Z \mapsto (A_\gamma y, \epsilon_\gamma t + L_\gamma y, ) \in  \R^{2d} \oplus Z\]
where $A_\gamma \in \GL_{2d}(\Z)$, $\epsilon_\gamma \in \{ \pm 1\}$ and $L_\gamma \colon \R^{2d} \to \R$ is a linear form.
From \cite[Theorem 5.4.2]{CorwinGreenleaf}, we know that there is an integer $q \in \N$ such that $\Lambda \subset \Z^{2d} \oplus \frac{1}{q}\Z  \subset  \R^{2d} \oplus Z$.
This implies that the linear form $L_\gamma$ must be rational of  denominator $q$.

It follows that if $(y,t) \in \R^{2d} \oplus Z$ with $y$ rational of denominator $q'$, then 
\[\Aut(X)(y,t) \subset  \frac{1}{q'}\Z^{2d} \times \Bigl(\{ \pm t \} + \frac{1}{q q'}\Z\Bigr) 
\subset \R^{2d} \oplus Z.\]
Since the group law in these coordinates is bi-linear with rational structural constants, this allows to conclude that $\Gamma.(y,t)$ is finite.
\end{proof}

\subsection{Heisenberg nilmanifold over number fields}
In the example above, the growth rate $\tau_Z(\mu)$ for the action on the center $Z$ is equal to $0$ because this action is virtually trivial.
In the next example, we have again a 2-step nilpotent group $N$ but the group $\Gamma \subset \Aut(X)$ will have a non-trivial action on the center.

Let $B \colon \C^{2d} \times \C^{2d} \to \C$ be a bilinear form with integral coefficients in the standard basis.
For a commutative ring with unity $R$, define $\Heis_B(R)$ to be the group with underlying set $R^{2d+1} = R^{2d} \times R$ and with the group law $\forall (y,t), (y',t') \in R^{2d} \times R$,
\[(y,t) (y',t') = \bigl(y+y',t +t' + B(y,y')\bigr).\]

Let $K$ be a number field.
Denote by $\Ocal_K$ its ring of integers and by $\Ocal_K^\times$ the group of units.
Let $r_1$ be the number of embeddings of $K$ in $\R$ and $r_2$ the number of conjugate pairs of embeddings of $K$ in $\C$.
Let $\iota \colon K \to \R^{r_1} \times \C^{r_2}$ be the corresponding ring embedding so that $\iota(\Ocal_K)$ is discrete, in fact is a lattice in $\R^{r_1} \times \C^{r_2}$.
This embedding of rings induces an embedding of groups $\Heis_B(\Ocal_K) \to \Heis_B(\R^{r_1} \times \C^{r_2})$, which we denote again by $\iota$.
Let $N = \Heis_B(\R^{r_1} \times \C^{r_2})$ and $\Lambda = \iota(\Heis_B(\Ocal_K))$.
It is easy to check that $\Lambda$ is a lattice in $N$.
Hence $X = N/\Lambda$ is a nilmanifold.

Inside $\Aut(\Lambda)$, we have automorphisms of the form
\[(y,t) \in \Heis(\Ocal_K) \mapsto ( Ay, \epsilon t + Ly) \in \Heis(\Ocal_K)\] 
where $A \subset \GL_{2d}(\Ocal_K)$, $\epsilon \in \Ocal_K^\times$ and $L \in (\Ocal_K^{2d})^*$ such that
\[\forall y,y' \in K^{2d}, B(Ay,Ay') = \epsilon B(y,y').\]
They extend to $\Aut(N)$ via $\iota$.
Denote by $\Gamma_0 \subset \Aut(X)$ the group consisting of such automorphisms.
For example, for $d = 1$ and $B = \det$, $A$ can be any matrix in $\GL_2(\Ocal_K)$ and $\epsilon = \det(A)$, and $\Gamma_0$ is isomorphic to a semi-direct product $\GL_2(\Ocal_K) \ltimes \Ocal_K^2$.

Consider the central subgroup 
\[Z = \{0\} \oplus \R^{r_1} \times \C^{r_2} \subset (\R^{r_1} \times \C^{r_2})^{2d} \times (\R^{r_1} \times \C^{r_2}) = N.\]
Then $\Gamma_0$ preserves $Z$. 
Let $\mu$ be a probability measure on $\Gamma_0$.
Necessarily, $\tau_Z(\mu) = 0$ because $\Gamma_0$ acts on $Z$ via the abelian group $\Ocal_K^\times$, which grows at polynomial rate.
Let $\Gamma$ be the group generated by $\Supp(\mu)$.
The action of $\Gamma$ on $N/Z = (\R^{r_1} \times \C^{r_2})^{2d}$ can be identified with $\Gamma \to \GL_{2d}(\Ocal_K) \xrightarrow{\iota} \GL_{2d}(\R)^{r_1} \times \GL_{2d}(\C)^{r_2}$
If the action of $\Gamma$ on $K^{2d}$ is strongly irreducible over $K$, then the action of $\Gamma$ on $N/Z$ is strongly irreducible over $\Q$.
If moreover $\Gamma$ is not virtually abelian, then the condition~\ref{it:nofactorus} of Theorem~\ref{thm:BekkaGuivarch} is satisfied.
We conclude that
\[\tau_Z(\mu) = 0 < 2\sigma_{X,T}(\mu)\]
where $T = N/(\Lambda Z)$. Hence Theorem~\ref{thm:main} applies.
However, Theorem~\ref{thm:torus} does \textbf{not} apply to the induced random walk on $T$, as the action of $\Gamma$ on $N/Z$ is is not irreducible over $\R$ unless $r_1+r_2 =1$ (it is strongly irreducible over $\Q$ unless $K$ is a totally complex extension of a totally real field, c.f.\ e.g.\ \cite[\S2]{Zhiren}).
However, it is conjectured that a quantitative equidistribution holds for such random walks on $T$, at least under the assumption that the projection of $\Gamma$ to $\GL_{2d}(\R)^{r_1} \times \GL_{2d}(\C)^{r_2}$ has semisimple Zariski-closure with no compact factor.

\subsection{A non-semisimple group of toral automorphisms}
\label{ss:GRvsSG}
In both examples above, the growth rates of the action on the fibers are all zero. Now we give an example where we have a positive growth rate while our result still applies.

Consider $X = \Tbb^{2d} = \R^{2d}/\Z^{2d}$ with $d \geq 2$. 
Let $A$ and $D$ be independent random elements in $\SL_d(\Z)$.
Denote by $\eta$ the law of $A$ and $\nu$ that of $D$.
Let $I_d$ denote the $d \times d$ identity matrix. 
Let $\mu$ be the law of the random bloc-triangular matrix 
\[\left(\begin{array}{c|c}
A & I_d\\
\hline
0 & D
\end{array}\right).
\]
Let $Z = \R^{d} \oplus \{0\} \subset \R^{2d}$ and $Y = \R^{2d}/\R^{d} \oplus \Z^d$.
The filtration $\{0\} \subset Z \subset \R^{2d}$ is preserved by $\Gamma$, the group generated by the support of $\mu$.

\begin{prop}
\label{pr:positivetau}
In the above setting, given the measure $\eta$, there is some $\nu$ such that Theorem~\ref{thm:tower} can be applied to $\mu$ and the filtration $\{0\} \subset Z \subset \R^{2d}$.
\end{prop}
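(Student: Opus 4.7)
The plan is to verify the three hypotheses of Theorem~\ref{thm:tower} for the filtration $\{0\} \subset Z \subset \R^{2d}$. Condition (i) is immediate since the block upper-triangular matrices in $\Supp(\mu)$ preserve $Z = \R^d \oplus \{0\}$. Condition (iii) requires the induced action of $\Gamma$ on $\R^{2d}/Z \cong \R^d$ to be strongly irreducible; since this action is through the bottom-right $D$-block, distributed as $\nu$, it suffices to choose $\nu$ whose support generates a strongly irreducible subgroup of $\SL_d(\Z)$.

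The heart of the argument is condition (ii): $\tau_Z(\mu) < 2\sigma_{X,Y}(\mu)$. Since the action of $\Gamma$ on $Z$ is via the $A$-block, $\tau_Z(\mu) = \tau_{\R^d}(\eta)$ is a fixed quantity $T_0$ depending only on the given $\eta$. To analyze $\sigma_{X,Y}(\mu)$, I would use the fiber-wise Fourier decomposition along $\pi \colon X \to Y$,
\[
L^2(X) \ominus L^2(Y) = \bigoplus_{a \in \Z^d \setminus \{0\}} H_a,
\]
where $H_a \cong L^2(\Tbb^d)$ is spanned by the characters $\chi_{(a,b)}(x_1,x_2)=e^{2\pi i(a\cdot x_1 + b\cdot x_2)}$ for $b \in \Z^d$. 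A direct computation shows that the Koopman operator sends $\chi_{(a,b)}$ to $\chi_{(\transp{A}a,\,a+\transp{D}b)}$, so $U_g$ maps $H_a$ isometrically onto $H_{\transp{A}a}$ by $f \mapsto M_{\chi_a} U_D f$. Consequently $\norm{U(\mu)^m \chi_{(a,b)}}_2^2$ equals the coincidence probability of two independent copies of the dual $\mu$-walk started at $(a,b)$, and a Cauchy-Schwarz estimate on the resulting Fourier expansion bounds $\norm{U(\mu)^m}_{L^2(X)\ominus L^2(Y)}$ in terms of the mixing rate of $\nu$ on $L^2(\Tbb^d)$.

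To close the argument, take $\nu$ to be uniform on a symmetric set $\{g_1^{\pm 1},\dots,g_r^{\pm 1}\}$ with $g_1,\dots,g_r \in \SL_d(\Z)$ generating a Schottky-type free subgroup of rank $r$ whose Zariski closure is all of $\SL_d$; such sets exist for every $r$ by standard ping-pong constructions, and they automatically satisfy the strong irreducibility needed in condition (iii). By Kesten's theorem the regular representation of $F_r$ has spectral radius $\frac{\sqrt{2r-1}}{r}$, and the Koopman representation of $\nu$ on $L^2(\Tbb^d)\ominus \C$ is weakly contained in it, so $\norm{U_\nu^m}_{L^2(\Tbb^d)\ominus\C} \leq \bigl(\frac{\sqrt{2r-1}}{r}\bigr)^m$. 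Combining with the previous paragraph, $\sigma_{X,Y}(\mu)$ grows at least like $\frac{1}{2}\log r$ as $r \to \infty$, so choosing $r$ large enough (depending only on $T_0$) guarantees $\sigma_{X,Y}(\mu) > T_0/2$, and Theorem~\ref{thm:tower} applies.

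The main obstacle I expect is the rigorous transfer from the $\nu$-mixing rate to a bound on $\sigma_{X,Y}(\mu)$: the operator $U(\mu)$ does not decouple cleanly across the fibers $H_a$, since $\transp{A}$ moves between them, and the twist factors $M_{\chi_a}$ intertwine non-trivially with the $U_\nu$'s along a path of fibers. The transfer should proceed either through an explicit Cauchy-Schwarz bookkeeping on the character-coincidence probabilities along chains $a \to \transp{A}_1 a \to \transp{A}_2\transp{A}_1 a \to \cdots$, or more structurally by identifying the Koopman representation on $L^2(X)\ominus L^2(Y)$ as weakly contained in the regular representation of the non-amenable subgroup of $\Gamma$ projecting to $F_r$ via the $D$-block.
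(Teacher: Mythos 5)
Your verification of conditions (i) and (iii) is fine, and you correctly identify that $\tau_Z(\mu)$ is a fixed constant once $\eta$ is given, so the whole proposition hinges on making $\sigma_{X,Y}(\mu)$ large by a suitable choice of $\nu$. But the decisive step --- the transfer from the spectral radius of $\nu$ on $L^2_0(\Tbb^d)$ to a bound on $\norm{U_{X,Y}(\mu)^m}$ --- is exactly what you flag as ``the main obstacle I expect'' and never carry out, and it is the actual content of the paper's argument (Lemma~\ref{lm:UXY}). The paper's proof of that lemma is short but not a routine bookkeeping: after conjugating to $\ell^2((\Z^d\setminus\{0\})\times\Z^d)$ with the dual action $(a,b)\mapsto(\transp{A}a,\,a+\transp{D}b)$, one observes that the projection $P_0$ onto the slice $b=0$ satisfies $P_0T(\mu)P_0=0$ (because the second coordinate of the image of $(a,0)$ is $a\neq 0$), deduces $\norm{T(\mu)^2}\leq 3\norm{(1-P_0)T(\mu)}$, and then, conditioning on $A=g$ and using the orthogonality of the fibers $\Hcal_a$ together with a translation isometry, bounds $\norm{(1-P_0)T(\mu)}\leq\norm{U_{Y,0}(\nu)}$. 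Without some version of this argument your chain ``mixing rate of $\nu$ $\Rightarrow$ $\sigma_{X,Y}(\mu)$ large'' is an unproven assertion, so the proof is incomplete at its core.

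There is a second gap in the mechanism you propose for making $\norm{U_{Y,0}(\nu)}$ small. Kesten's theorem gives $\norm{\lambda_{F_r}(\nu)}=\frac{\sqrt{2r-1}}{r}$ only for the regular representation; to transfer this to the Koopman representation on $L^2_0(\Tbb^d)$ you need that representation to be weakly contained in $\lambda_{F_r}$, which amounts to amenability (e.g.\ triviality or cyclicity) of the stabilizers in your Schottky group of nonzero vectors of $\Z^d$ under the dual action. This is not automatic --- the full stabilizer of a nonzero vector in $\SL_d(\Z)$ is a nonamenable group $\SL_{d-1}(\Z)\ltimes\Z^{d-1}$ for $d\geq 3$, and you would have to argue that its intersection with your particular free subgroup is amenable. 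The paper avoids this entirely: it fixes one symmetric measure $\nu_0$ on $\SL_d(\Z)$ generating a Zariski-dense subgroup, invokes the Furman--Shalom spectral gap $\norm{U_{Y,0}(\nu_0)}<1$, and takes $\nu=\nu_0^{*k}$, so that $\norm{U_{Y,0}(\nu)}\leq\norm{U_{Y,0}(\nu_0)}^k$ is made as small as desired by choosing $k$ large, with strong irreducibility for free. Adopting that choice of $\nu$, and actually proving the comparison lemma above, would close both gaps.
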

As a consequence, we can say the following about orbit closures under the action of $\Gamma$, the group generated by the support of the constructed $\mu$.
For every $x \in \Tbb^{2d}$, either $\Gamma x$ is dense or $\Gamma x$ is contained in a finite union of affine subtori parallel to $\R^{d}/\Z^d \oplus \{0\}$.
For properly chosen $\eta$, the group $\Gamma$ will \textbf{not} have semisimple Zariski closure.
Thus, the work of Benoist-Quint~\cite{BenoistQuintIII} does not apply to such group.
Neither does the work of Guivarc'h-Starkov~\cite{GS} nor that of Muchnik~\cite{Muchnik} (though stationary measures even in this case are analyzed by Eskin and the third named author in~\cite{EL}).

To show the proposition, we need the following lemma to control $\sigma_{X,Y}(\mu)$.
\begin{lemm}
\label{lm:UXY}
In the setting above, denote by $(U_Y, L^2(Y,\mes_Y))$ the Koopman representation associated to the action of $\Aut(Y)$ on $Y$ and by $U_{Y,0}$ the restriction of $U_Y$ to the subspace of mean zero functions. 
Then we have
\[\norm{U_{X,Y}(\mu)^2} \leq \sqrt{3} \norm{U_{Y,0}(\nu)}.\]
\end{lemm}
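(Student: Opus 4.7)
The plan is to decompose $L^2(X)$ via Fourier analysis in the fiber $Z$-direction and reduce the operator-norm bound to the spectral gap of $U_{Y,0}(\nu)$. I would write $L^2(X) = \bigoplus_{k_1 \in \Z^d} \Hcal_{k_1}$, where $\Hcal_{k_1}$ is the closed subspace of functions $(x_1,x_2)\mapsto e^{2\pi i \langle k_1, x_1\rangle}\tilde f(x_2)$ with $\tilde f \in L^2(Y)$; since $\pi$ projects to $x_2$, we have $L^2(Y) = \Hcal_0$ and $V = \bigoplus_{k_1 \neq 0} \Hcal_{k_1}$. For $f \in V$, denote by $\tilde f_{k_1} \in L^2(Y)$ the fiber function of its $\Hcal_{k_1}$-component.

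The key local computation uses the block form of $g_2 g_1$ (with diagonal blocks $A_2 A_1, D_2 D_1$ and upper-right block $A_2 + D_1$) together with the independence of the $A_i$'s from the $D_i$'s. This makes the double average of $f_{k_1}\circ g_2 g_1$ factor into an $A$-part and a $D$-part, yielding
\[ U_X(\mu)^2 f_{k_1}(x_1,x_2) = \Phi_{k_1}(x_1,x_2)\,V_{k_1}\tilde f_{k_1}(x_2), \]
where $\Phi_{k_1}(x_1,x_2) = \Ebb_{A_1,A_2}[e^{2\pi i \langle k_1, A_2 A_1 x_1 + A_2 x_2\rangle}]$ satisfies $|\Phi_{k_1}| \leq 1$ pointwise, and $V_{k_1} := U_Y(\nu)\,M_{k_1}\,U_Y(\nu)$ with $M_{k_1}$ multiplication by $e^{2\pi i \langle k_1, \cdot\rangle}$. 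The crucial observation is that $k_1 \neq 0$ makes $M_{k_1}$ a nontrivial character: it sends the constant function $1$ into $L^2_0(Y)$, and the $0$-Fourier coefficient of $M_{k_1} h$ equals the $(-k_1)$-Fourier coefficient of $h$, which for $h = U_Y(\nu)\tilde f^0$ with $\tilde f^0 \in L^2_0(Y)$ is bounded by $\|U_{Y,0}(\nu)\|\,\|\tilde f^0\|$. Splitting $\tilde f_{k_1}$ into mean and mean-zero parts and estimating the two contributions to $V_{k_1}\tilde f_{k_1}$ separately using the spectral gap on $L^2_0(Y)$ then yields $\|V_{k_1}\|\leq C\,\|U_{Y,0}(\nu)\|$ for a small absolute constant $C$.

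The main obstacle is aggregating the per-$k_1$ estimates, since the Fourier supports of the $\Phi_{k_1}$'s overlap, so the pieces $\Phi_{k_1}V_{k_1}\tilde f_{k_1}$ are not orthogonal across $k_1$. I would compute the $(k_1',k_2')$-Fourier coefficient of $U_X(\mu)^2 f$ explicitly; it becomes a double sum weighted by transition probabilities of the dual random walk $k \mapsto A^T k$ on $\Z^d$. A Cauchy--Schwarz estimate, using that the associated Markov operator $\Fcal_A$ is a contraction on $\ell^2(\Z^d \setminus \{0\})$ (being an average of unitary permutations that preserve this subspace, as $A$ is invertible), then gives
\[ \|U_X(\mu)^2 f\|^2 \leq \|\Fcal_A\|^{4}\sum_{k_1 \neq 0} \|V_{k_1}\tilde f_{k_1}\|^2 \leq C^2\,\|U_{Y,0}(\nu)\|^2\,\|f\|^2, \]
from which the lemma follows.
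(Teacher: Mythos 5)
Your proposal is correct, and the underlying mechanism is the same as the paper's (Fourier decomposition along the fiber, independence of the $A$- and $D$-parts, and the fact that a nonzero fiber frequency forces a passage through mean-zero functions on $Y$), but the execution is genuinely different. The paper works with the one-step operator on the Fourier side: writing $T(\mu)$ for the conjugated operator on $\ell^2((\Z^d\setminus\{0\})\times\Z^d)$ and $P_0$ for the projection onto frequencies $(a,0)$, it observes $P_0T(\mu)P_0=0$, expands $T(\mu)^2=(P_0T(\mu)+(1-P_0)T(\mu))^2$ into three terms each bounded by $\norm{(1-P_0)T(\mu)}$, and then bounds $\norm{(1-P_0)T(\mu)}\leq\norm{U_{Y,0}(\nu)}$ by conditioning on $A=g$, using orthogonality of the images of the subspaces $\Hcal_a$ and translation isometries, and finally averaging over the law of $A$. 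You instead compute the two-step operator explicitly on each fiber-character subspace, which produces the sandwich $V_{k_1}=U_Y(\nu)M_{k_1}U_Y(\nu)$, bound $\norm{V_{k_1}}$ by splitting into mean and mean-zero parts (this is sound: the constant part of $\tilde f_{k_1}$ contributes at most $\norm{U_{Y,0}(\nu)}$ since $M_{k_1}1$ is mean-zero, and the mean-zero part contributes at most $\norm{U_{Y,0}(\nu)}$ as well if you use Pythagoras rather than the triangle inequality on the output of $M_{k_1}U_Y(\nu)\tilde f^0$), and then handle the non-orthogonality across $k_1$ by the doubly stochastic kernel $p(k_1,k_1')=\Pbb[\transp{(A_2A_1)}k_1=k_1']$ on $\Z^d\setminus\{0\}$; this Schur-type step does close the argument, since each fixed $A_2A_1\in\SL_d(\Z)$ permutes $\Z^d\setminus\{0\}$ (note you need integrality/unimodularity here, not just invertibility over $\R$, which the setting provides). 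Two further minor points: working with $f\circ(g_2g_1)$ rather than $f\circ(g_2g_1)^{-1}$ is harmless because the two operators are adjoint on the invariant complement of $L^2(Y)$ and have equal norm (and it is the only choice for which the upper-right block $A_2+D_1$ splits cleanly); and as written you only claim an unspecified absolute constant $C$, whereas the careful Pythagorean estimate gives $\norm{V_{k_1}}\leq\sqrt{2}\,\norm{U_{Y,0}(\nu)}$ and hence the stated bound (the exact constant is irrelevant for the application in Proposition~\ref{pr:positivetau}, but a naive triangle-inequality version gives $\sqrt{5}$, so a word of care is needed to literally match $\sqrt{3}$). What each approach buys: the paper's is shorter and avoids the aggregation step entirely via the projection identity; yours gives a more explicit structural picture of $U_{X,Y}(\mu)^2$ as a frequency walk in the $A$-variable tensored with the operators $U_Y(\nu)M_{k_1}U_Y(\nu)$, at the cost of the extra contraction lemma.
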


\begin{proof}
Let $F \colon L^2(X,m_X) \to \ell^2(\Z^{2d})$ denote the isometry given by the Fourier transform.
Under this isometry, $U_{X,Y}$ is conjugated to a unitary representation $T$ of $\Gamma$ on $\ell^2((\Z^d \setminus \{0\}) \times \Z^d)$.
Explicitly, 
let $\phi \in \ell^2((\Z^d \setminus \{0\}) \times \Z^d)$.
Then for all $(a,b) \in (\Z^d \setminus \{0\}) \times \Z^d$,
\[(T(\mu)\phi)(a,b) = \int_{\Gamma} \phi(\transp{g}(a,b)) \dd \mu(g) = \Ebb\bigl[\phi(\transp{A}a, a + \transp{D}b)\bigr].\]

Let $P_0$ be the orthogonal projection $\ell^2((\Z^d \setminus \{0\}) \times \Z^d) \to \ell^2((\Z^d \setminus \{0\})\times \{0\})$. 
Concretely, for $\phi \in \ell^2((\Z^d \setminus \{0\}) \times \Z^d)$ and $(a,b) \in (\Z^d \setminus \{0\}) \times \Z^d$,
\[(P_0\phi)(a,b) = \delta_0(b) \phi(a,0).\]
Then $P_0T(\mu)P_0 = 0$ because
\[(P_0T(\mu)P_0\phi)(a,b) = \delta_0(b)\Ebb\bigl[\delta_0(a) \phi(\transp{A}a, 0)\bigr] = 0.\]

Hence, taking the square of the equality $T(\mu) = P_0 T(\mu) + (1 - P_0) T(\mu)$, we see,
\[\norm{T(\mu)^2} \leq 3\norm{(1 - P_0)T(\mu)}.\]
To conclude, it suffices to show
\begin{equation}
\label{eq:1P0T}
\norm{(1 - P_0)T(\mu)} \leq \norm{U_{Y,0}(\nu)}.
\end{equation}

We first show the inequality in the case where $A$ is almost surely some fixed matrix $g \in \SL_d(\Z)$.
Consider, for $a \in \Z^d\setminus \{0\}$, the subspace 
\[\Hcal_a = \ell^2(\{a\} \times \Z^d) \subset \ell^2((\Z^d \setminus \{0\}) \times \Z^d).\]
Let $Q_a$ denote the orthogonal projection onto $\Hcal_a$.
Observe that
\[\forall a \in \Z^d\setminus\{0\}, T(\mu) \Hcal_a \subset \Hcal_{\transp{g}^{-1}a}.\]
Moreover $P_0$ preserves the subspaces $\Hcal_a$.
Hence, for any $\phi \in \ell^2((\Z^d \setminus \{0\}) \times \Z^d)$, the vectors $(1-P_0)T(\mu)Q_a \phi$, $a \in \Z^d \setminus \{0\}$, are all orthogonal to each other. 
Thus
\begin{align*}
\norm{(1-P_0)T(\mu)\phi}^2 &= \sum_{a \in \Z^d \setminus \{0\} } \norm{(1-P_0)T(\mu)Q_a\phi}^2\\
&\leq \sum_{a \in \Z^d \setminus \{0\}} \norm{(1-P_0)T(\mu)Q_a}^2 \norm{Q_a\phi}^2\\
&\leq \Bigl(\sup_{a \in \Z^d \setminus \{0\}} \norm{(1-P_0)T(\mu)Q_a}^2 \Bigr) \norm{\phi}^2.
\end{align*}
By identifying $\Hcal_a$ with $\ell^2(\Z^d)$ in the obvious way, we see that $\norm{(1-P_0)T(\mu)Q_a} = \norm{V_a}$ where $V_a \colon \ell^2(\Z^d) \to \ell^2(\Z^d \setminus \{0\})$ is the operator defined by
\[\forall \psi \in \ell^2(\Z^d),\, \forall b \in \Z^d \setminus \{0\}, \quad (V_a\psi)(b) = \Ebb\bigl[\psi(\transp{g}^{-1}a + \transp{D}b)\bigr].\]
Let $W_a \colon \ell^2(\Z^d) \to \ell^2(\Z^d)$ be the isometry induced by translating the index by $\transp{g}^{-1}a$, so that $V_a = V_0W_a$. 
But $V_0$ is conjugated to $U_{Y,0}(\nu)$ via the Fourier transform. Hence
\[\sup_{a \in \Z^d \setminus \{0\}} \norm{(1-P_0)T(\mu)Q_a} \leq \sup_{a \in \Z^d \setminus \{0\}} \norm{V_a} \leq \norm{V_0} = \norm{U_{Y,0}(\nu)}.\]
This shows \eqref{eq:1P0T} for the special case where $A$ is almost surely constant.

Using the independence between $A$ and $D$. We can write
\[\mu = \int_{\SL_d(\Z)} \mu_g \dd \eta(g),\]
with  $\mu_g$ being the law of the random matrix 
\[\left(\begin{array}{c|c}
g & I_d\\
\hline
0 & D
\end{array}\right).
\]
Then $\norm{T(\mu)} \leq \int_{\SL_d(\Z)} \norm{T(\mu_g)} \dd \eta(g)$ proves \eqref{eq:1P0T}.
\end{proof}

\begin{proof}[Proof of Proposition~\ref{pr:positivetau}]
Once $\eta$ is chosen. The action of $\Gamma$ on $Z$ is determined. Hence $\tau_Z(\mu)$ is determined. 

Let $\nu_0$ be a symmetric probability measure on $\SL_d(\Z)$ whose support generates a Zariski-dense subgroup.
Then by a result of Furman and Shalom~\cite[Theorem 6.5]{FurmanShalom} (which is a special case of Theorem~\ref{thm:BekkaGuivarch}), 
\[\norm{U_{Y,0}(\nu_0)} < 1.\]

Let $\nu = \nu_0^{*k}$ where $k$ is an integer.
By choosing $k$ large enough we can make $\norm{U_{Y,0}(\nu_0)}$ arbitrarily small and hence $\sigma_{X,Y}(\mu)$ arbitrarily large by Lemma~\ref{lm:UXY}. 
This ensures that
\[\tau_Z(\mu) < 2\sigma_{X,Y}(\mu).\]
At the same time, the support of $\nu$ generates a Zariski dense subgroup $\Gamma$ in $\SL_d$. In particular the action of $\Gamma$ on $\R^{2d}/Z$ is strongly irreducible.
This is why Theorem~\ref{thm:tower} can be applied.
\end{proof}

\subsection{A non-example}
Let $N$ be the connected and simply-connected nilpotent Lie group whose Lie algebra is the free $2$-step nilpotent Lie algebra on 3 generators.
It can be realised as $N = \R^3 \oplus \R^3$ with the group law being
\[(x,y)(x',y') = (x + x', y + y' + x \wedge x'),\quad \text{for all } x,x',y,'y \in \R^3,\]
where $\wedge$ denotes the usual cross product on $\R^3$.
As explained in \cite[Example 35]{BekkaGuivarch}, the automorphism group $\Aut(N)$ of $N$ is isomorphic to the subgroup of $\GL_6(\R)$ of matrices $g_{A,B}$ of the form
\[g_{A,B} = 
\left(\begin{array}{c|c}
A & 0\\
\hline
B & \det(A) (A^{\tr})^{-1}
\end{array}\right),\]
with $A \in \GL_3(\R)$ and $B$ any $3 \times 3$ matrix with real coefficients.
Here $\Aut(N)$ acts on the center $Z$ of $N$ via $\theta_Z \colon g_{A,B} \mapsto \det(A) (A^{\tr})^{-1}$ and acts on $N/Z$ via $\theta_{N/Z} \colon g_{A,B} \mapsto A$.

Let $\Lambda$ be any lattice in $N$ and set $X = N/\Lambda$.
Let $\mu$ be a probability measure on $\Aut(X)$ and $\Gamma$ the group generated by its support.
Denote moreover $Y = N/(\Lambda Z)$.
In order to apply Theorem~\ref{thm:main} to the factor map $X \to Y$, we need 
\begin{enumerate}
\item $\tau_Z(\mu)$ to be small; informally, that is $\theta_Z(\Gamma)$ is a small group;
\item $\sigma_{X,Y}(\mu)$ to be large; in view of Theorem~\ref{thm:BekkaGuivarch}, this requires  $\theta_{N/Z}(\Gamma)$ to be a large group (not virtually amenable by \cite[Theorem 1 and Theorem 5]{BekkaGuivarch});
\end{enumerate}
But $\theta_Z(\Gamma)$, isomorphic to $\theta_{N/Z}(\Gamma)$, cannot be small and large at the same time.
This is why, very likely, Theorem~\ref{thm:main} does not apply to such random walks.
However, we still expect the conclusion of Theorem~\ref{thm:main} to hold, provided that $\theta_{N/Z}(\Gamma)$ is a large group (e.g. Zariski dense in $\SL_3(\R)$).

\section{The setup}
\label{sc:setup}
Throughout this paper, $X = N/\Lambda$ denotes a nilmanifold.
As recalled in the introduction, this means that $N$ is a connected simply connected nilpotent Lie group, $\Lambda \subset N$ is a lattice, which is necessarily cocompact (\cite[Theorem 2.1]{Raghunathan}).
Recall that the $\Q$-span of $\log(\Lambda)$ defines a $\Q$-structure on $\Lie(N)$, the Lie algebra of $N$.
A connected closed subgroup of $N$ is said to be rational if its Lie algebra is rational in $\Lie(N)$ with respect to this $\Q$-structure.
For a connected closed subgroup $M \subset N$ to be rational it is necessary and sufficient that $M \cap \Lambda$ is a lattice in $M$. For these, see \cite[\S 5.1]{CorwinGreenleaf}.
    
Denote by $\Aut(X) = \Aut(N/\Lambda)$ denote the group of continuous automorphisms of $N$ preserving $\Lambda$.
Let $\Aff(X) = \Aff(N/\Lambda) = \Aut(X) \ltimes N$ denote the group of (invertible) affine transformations of $X$.
More precisely for $\gamma \in \Aut(X)$ and $n \in N$, let $(\gamma,n) \in \Aff(X)$ denote the map $X \to X$, $x\Lambda \mapsto n\gamma(x)\Lambda$.
Denote by $\theta \colon \Aff(X) \to \Aut(X)$ the projection to the automorphism part, that is, $\theta(\gamma,n) = \gamma$ for all $(\gamma,n) \in \Aff(X)$.

Moreover, we will identify an automorphism $\gamma \in \Aut(X)$ with $(\gamma,1_N) \in \Aff(X)$ and an element $n \in N$ with the left translation $(1,n) \in \Aff(X)$.
With this notation we have, for all $\gamma \in \Aut(X)$ and all $n \in N$, $\gamma n \gamma^{-1} = \gamma(n)$.
If $g \in \Aff(X)$ and $n \in N$ is central, then $gng^{-1} = \theta(g)(n)$.

Let $\mes_X$ denote the normalised $N$-invariant measure on $X$ induced by the Haar measure of $N$.
The action $\Aff(X) \acts X$ preserves $\mes_X$.
Let $(U,L^2(X,\mes_X))$ denote the associated Koopman representation.
That is, for $g \in \Aff(X)$, $U(g)$ is the unitary operator on $L^2(X,\mes_X)$ defined by
\[\text{for all $f \in L^2(X,\mes_X)$ and almost all $x \in X$, } U(g)f (x) = f(g^{-1}(x)).\]
Let also $U^*(g) = U(g)^* = U(g^{-1})$.
By an abuse of notation, we let $U(g)$ and $U^*(g)$ denote also the operators from $\Ccal^0(X)$, the space of continuous functions, to itself defined in the obvious way.

Let $\mu$ be a Borel measure on $\Aff(X)$.
We set $U(\mu) = \int U(g) \dd \mu(g)$ and $U^*(\mu) = \int U^*(g) \dd \mu(g)$.
For any integer $m \geq 0$, any Borel measure $\eta$ on $X$ and any continuous function $f \in \Ccal^0(X)$, we have
\[ \int_X f \dd \mu^{*m}*\eta = \int_X U^*(\mu)^m f \dd \eta.\]

\subsection{Hölder functions}
We fix a Riemannian metric on $X$ and let $d \colon X \times X \to {[0,+\infty)}$ denote the associated distance function.
Let $\alpha \in {(0,1]}$. 
Denote by $\Ccal^{0,\alpha}(X)$ the set of $\alpha$-Hölder continuous functions from $X$ to $\C$. Endow it with the norm
\[\norm{f}_{0,\alpha} = \norm{f}_\infty + \omega_\alpha(f)\]
where
\[\omega_\alpha(f) = \sup_{x \neq y \in X} \frac{\abs{f(x) - f(y)}}{d(x,y)^\alpha}.\]


For $g \in \Aff(X)$, define
\[\Lip_X(g) = \sup_{x,x' \in X,\, x \neq x'} \frac{d(gx,gx')}{d(x,x')}.\]
This quantity is finite since $g$ is of class $\mathcal{C}^\infty$ and $d$ is a Riemannian distance.
It is greater or equal to $1$ since $X$ is compact.
Moreover,  $\Lip_X \colon \Aff(X) \to {[1,+\infty)}$ is continuous and submultiplicative, i.e. for all $g,h \in G$,
\begin{equation}
\label{eq:product_Lip}
\Lip_X(gh) \leq \Lip_X(g) \Lip_X(h).
\end{equation}

It is straight forward to check that if $g \in \Aff(X)$ and $f \in \Ccal^{0,\alpha}(X)$, then $U^*(g)f$ is still $\alpha$-Hölder continuous and
\[\norm{U^*(g)f}_{0,\alpha} \leq \Lip_X(g)^\alpha \norm{f}_{0,\alpha}.\] 

Remark also that for $f_1, f_2 \in \Ccal^{0,\alpha}(X)$, then $f_1f_2 \in \Ccal^{0,\alpha}(X)$ and 
\begin{equation}
\label{eq:product_Holder}
\norm{f_1f_2}_{0,\alpha} \leq  \norm{f_1}_{0,\alpha} \norm{f_2}_{0,\alpha}.
\end{equation}

\section{The main argument}
\label{sc:induction}
As in the statement of Theorem~\ref{thm:main}, let $\mu$ be Borel measure on $\Aff(X)$ having a finite exponential moment.
Let $\Gamma \subset \Aut(X)$ denote the subgroup generated by the support of $\theta_*\mu$.
Let $Z \subset N$ be a $\Gamma$-invariant rational connected closed central subgroup.
Then $Y = N/(\Lambda Z)$ is a nilmanifold and we have a $\Gamma \ltimes N$-equivariant factor map $\pi \colon X \to Y$.
Let $\mes_Y$ denote the $N$-invariant probability measure on $Y$ induced by the Haar measure of $N$.
We defined two quantities $\tau_Z(\mu)$ and  $\sigma_{X,Y}(\mu)$ in the introduction.
This section is dedicated to the proof of the following proposition.
\begin{prop}
\label{pr:XtoY}
Assume that $\mu$ has a finite $\beta$\dash{}exponential moment.
Assume 
\[\tau_Z(\mu) < 2\sigma_{X,Y}(\mu).\]
Then given $0 < \alpha \leq \min\{1,\beta\}$, there exists a constant $C \geq 2$ such that the following holds.

For any Borel probability measure $\eta$ on $X$, any $t \in {(0,1/2)}$ and any $m \geq  C\log \frac{1}{t}$,
if
\[\Wcal_\alpha(\mu^{*m}*\eta, \mes_X) \geq t\] then 
\[\Wcal_\alpha(\pi_*\eta, \mes_Y) \geq e^{-Cm}.\]
\end{prop}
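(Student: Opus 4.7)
The plan is to argue by contrapositive: fix a test function $f$ with $\norm{f}_{0,\alpha} \leq 1$ witnessing $\abse{\int U^*(\mu)^m f \, d\eta - \int f \, d\mes_X} \geq t$, and deduce $\Wcal_\alpha(\pi_*\eta, \mes_Y) \geq e^{-Cm}$. The structural tool is Fourier analysis along the central fibers of $\pi \colon X \to Y$, which are tori $Z/(Z \cap \Lambda)$. Decompose $f = \tilde{f}\circ \pi + f_\perp$, where $\tilde{f} \colon Y \to \C$ is the fiberwise average and $f_\perp = \sum_{\chi \neq 0} f_\chi$ with $f_\chi(xz) = \chi(z) f_\chi(x)$; the Hölder norms of the pieces are controlled by that of $f$ (assuming an $N$-invariant metric, which one may arrange). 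The $\tilde{f}\circ \pi$ contribution factors through $Y$: since $\mes_Y$ is $\mu$-invariant and the finite $\beta$-exponential moment yields $\norm{U_Y^*(\mu)^m \tilde{f}}_{0,\alpha} \leq K_0^m$ for $K_0 = \int \Lip_X(g)^\alpha \, d\mu(g)$, it is at most $K_0^m \Wcal_\alpha(\pi_*\eta, \mes_Y)$, which is $\leq t/3$ as soon as $C$ is much larger than $\log K_0$ and $m \geq C\log(1/t)$.

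The main step is controlling the $f_\perp$ part; this is where the hypothesis $\tau_Z(\mu) < 2\sigma_{X,Y}(\mu)$ enters. Apply Cauchy--Schwarz to the splitting $\mu^{*m} = \mu^{*(m-m_0)} * \mu^{*m_0}$ with $m_0 \approx m/2$: setting $\rho = \mu^{*(m-m_0)} * \eta$, one obtains
\[
\Bigl|\int U^*(\mu)^m f_\perp \, d\eta\Bigr|^2 \leq \int \bigl|U^*(\mu)^{m_0} f_\perp\bigr|^2 \, d\rho.
\]
Expanding via independent samples $g_1, g_2 \sim \mu^{*m_0}$ gives $|U^*(\mu)^{m_0} f_\perp|^2 = \Ebb_{g_1, g_2}[(f_\perp \circ g_1)\overline{(f_\perp \circ g_2)}]$. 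Each summand is a product of fiber-characters, and becomes $Z$-invariant exactly when the $Z$-characters cancel, i.e. when $\theta_Z(g_1)$ and $\theta_Z(g_2)$ induce matching action on the characters occurring in $f_\perp$. Split $|U^*(\mu)^{m_0} f_\perp|^2$ into its $Z$-invariant projection $P_Z|U^*(\mu)^{m_0} f_\perp|^2$ and complement. The $Z$-invariant part descends to $Y$ and is controlled by $\norm{U^*(\mu)^{m_0} f_\perp}_{L^2(\mes_X)}^2 + \norm{\cdots}_{0,\alpha}\Wcal_\alpha(\pi_*\rho, \mes_Y)$; the first summand is $\leq e^{-(2\sigma - o(1))m_0}$ by definition of $\sigma = \sigma_{X,Y}(\mu)$, and the second is $\lesssim K_0^m \Wcal_\alpha(\pi_*\eta, \mes_Y)$.

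The main obstacle is the non-$Z$-invariant piece $\int (I - P_Z)|U^*(\mu)^{m_0} f_\perp|^2 \, d\rho$, where the $L^2(\mes_X)$-orthogonality is absent because $\rho \neq \mes_X$. Here the condition $\tau_Z < 2\sigma$ is essential: by definition of $\tau_Z$, the distribution of $\theta_Z(g)$ under $\mu^{*m_0}$ is concentrated on $\sim e^{\tau_Z m_0}$ values up to an exponentially small remainder, and conditioning on $\theta_Z(g_1), \theta_Z(g_2)$ isolates the pairs whose $\theta_Z$-images agree (absorbed by the $Z$-invariant case) from the rest, where the spectral decay $e^{-2\sigma m_0}$ comfortably dominates the entropy cost $e^{\tau_Z m_0}$ of enumerating distinct classes. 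Assembling everything yields
\[
\Bigl|\int U^*(\mu)^m f \, d\eta - \int f \, d\mes_X\Bigr| \leq e^{-\lambda m} + O\!\bigl(K_0^m \Wcal_\alpha(\pi_*\eta, \mes_Y)\bigr)
\]
for some $\lambda > 0$ depending on $2\sigma - \tau_Z$. Choosing $C$ large enough relative to $\lambda$, $\log K_0$, and $\alpha$, the hypothesis $m \geq C\log(1/t)$ forces the first term below $t/2$, so $\Wcal_\alpha(\pi_*\eta, \mes_Y) \geq e^{-Cm}$ as required. The hardest step is the non-$Z$-invariant bound, where transferring the $L^2(\mes_X)$ spectral gap to an estimate against $\rho$ requires precisely the slack provided by $2\sigma > \tau_Z$.
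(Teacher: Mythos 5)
Your overall strategy (Fourier split along the fibers, Cauchy--Schwarz, and an entropy-versus-spectral-gap comparison driven by $\tau_Z(\mu) < 2\sigma_{X,Y}(\mu)$) is the right flavor, and your treatment of the $\Hcal_0$-part and of the term $\normbig{U^*(\mu)^{m_0} f_\perp}_{L^2(\mes_X)}^2$ is fine. But there is a genuine gap at exactly the step you flag as hardest. After applying Cauchy--Schwarz with respect to $\rho=\mu^{*(m-m_0)}*\eta$ you must bound $\int (I-P_Z)\absbig{U^*(\mu)^{m_0} f_\perp}^2 \dd\rho$, and your proposed mechanism --- ``conditioning on $\theta_Z(g_1),\theta_Z(g_2)$\dots where the spectral decay $e^{-2\sigma m_0}$ dominates the entropy cost $e^{\tau_Z m_0}$'' --- does not apply there. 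For a fixed mismatched pair $(g_1,g_2)$ the function $f_\perp(g_1\cdot)\overline{f_\perp(g_2\cdot)}$ has fiberwise mean zero but is of size $O(1)$ pointwise; the spectral gap is a statement about $L^2(\mes_X)$ and gives nothing when integrating against the unknown measure $\rho$, and the contrapositive hypothesis $\Wcal_\alpha(\pi_*\eta,\mes_Y)<e^{-Cm}$ constrains only the $Y$-marginal of $\eta$, hence says nothing about how $\rho$ distributes along the fibers. So this term has no a priori smallness, and your assembly of the final estimate is unjustified.

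The paper's proof avoids this term altogether by changing the order of operations: first reduce (via a Féjer-kernel argument, Lemma~\ref{lm:finda0}) to a witness $f\in\Hcal_{a_0}$ consisting of a \emph{single} fiber frequency of height $\leq t^{-C}$ --- this is also needed to make your claim ``matching $\theta_Z$-images give $Z$-invariance'' literally correct, since with many frequencies present the matched pairs still produce non-invariant cross terms --- and then \emph{condition before squaring}: partition $\mu^{*m}=\sum_a \mu^{*m}(P_a)\,\mu^{(m)}_a$ over the classes $P_a=\{g:\theta(g)^{-1}\cdot a_0=a\}$, so that Cauchy--Schwarz yields $t^2\leq \sum_a \mu^{*m}(P_a)\int\absbig{f^{(m)}_a}^2\dd\eta$ with $f^{(m)}_a=U^*(\mu^{(m)}_a)f\in\Hcal_a$, and every function $\absbig{f^{(m)}_a}^2$ integrated against the unknown measure is genuinely $Z$-invariant. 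The entropy bound $\#A\leq e^{\tau m}$ and the spectral gap then enter only through the Haar-measure term, via the orthogonality identity $\normbig{U^*(\mu)^m f}_2^2=\sum_a\mu^{*m}(P_a)^2\normbig{f^{(m)}_a}_2^2$ and a Cauchy--Schwarz over $a\in A$ giving $\sqrt{\#A}\,\norm{U_{X,Y}(\mu)^m}\leq e^{-(\sigma-\tau/2)m}$ (with a Markov-inequality step, using the exponential moment, to discard classes with large Hölder norm). Note also that the paper integrates against $\eta$ itself using the full $\mu^{*m}$ rather than splitting off $m_0\approx m/2$ steps, and it handles the remaining case $a_0=0$ by a separate elementary lemma (Lemma~\ref{lm:gobackm}), which your $K_0^m$-bound essentially reproduces. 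To repair your write-up you would need to restructure the Cauchy--Schwarz exactly along these lines; as written, the non-$Z$-invariant piece is an unbridged gap.
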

In other words, if there is $f \in \Ccal^{0,\alpha}(X)$ satisfying
\[\abse{\int_X f \dd \mu^{*m}*\eta - \int_X f \dd \mes_X} > t\norm{f}_{0,\alpha},\]
then there exists $\phi \in \Ccal^{0,\alpha}(Y)$ such that 
\[\abse{\int_X \phi \dd \pi_* \eta - \int_X \phi \dd \mes_Y} > e^{-Cm} \norm{\phi}_{0,\alpha}.\]

\subsection{Principal torus bundle} 
Let $S = Z/(Z \cap \Lambda)$. 
Let $d = \dim Z$.
Then $S$ is a torus of dimension $d$.
Note that $\pi$ is a fiber bundle of fiber $S$.
Moreover, it is a principal bundle: since $Z$ is contained in the center of $N$, the action of $Z$ by left translation on $X$ factors through $S$.

By choosing a basis in $Z\cap \Lambda$, we fix an isomorphism between $\Z^d$ and the group $\Hom(S,S^1)$ of unitary characters of $S$.
Denote the isomorphism as $a \mapsto \chi_a$, $a \in \Z^d$.
The Koopman representation $U$ restricted to $Z$ factors through $S$. Hence we can decompose $L^2(X,\mes_X)$ into a Hilbert sum of characteristic subspaces
\begin{equation}
\label{eq:decompHa}
L^2(X,\mes_X) = \sum_{a \in \Z^d} \Hcal_a
\end{equation}
where for $a \in \Z^d$,
\[\mathcal{H}_a = \{\, f \in L^2(X,\mes_X) \mid \forall z \in Z,\, U(z)f = \chi_a(z)f\,\}.\]
Here we identified $\chi_a$ with its lift as character of $Z$.
For $a = 0$, $\Hcal_0$ is the subspace of functions that are constant on each fiber of $\pi$. 
Since $\pi_*\mes_X = \mes_Y$, we have the isometry
\[\Hcal_0 = L^2(Y,\mes_Y).\]
Thus, the Hilbert space of the representation $U_{X,Y}$ is precisely $\sum_{a \in \Z^d \setminus \{0 \}} \Hcal_a$.

Since for all $g \in \Aff(X)$ and $z \in Z$, $zg = g \theta(g)^{-1}(z)$, we have 
\[\forall g \in \Aff(X),\, \forall a \in \Z^d,\quad U(g)\Hcal_a = \Hcal_{\theta(g)\cdot a},\]
where $\gamma \cdot a \in \Z^d$ is such that $\chi_{\gamma \cdot a} = \chi_a \circ \gamma^{-1}$ for $\gamma \in \Gamma$. This defines an action of $\Gamma$ on $\Z^d$.
Note that $\Gamma$ acts via some homomorphism $\Gamma \to \GL_d(\Z)$.

\subsection{Fourier transform}
For continuous functions, the decomposition~\eqref{eq:decompHa} can be made more explicit using Fourier transforms. 
The aim here is to prove the following lemma using Fourier transforms.
\begin{lemm}
\label{lm:finda0}
Given $\alpha \in {(0,1]}$, there is a constant $C$ depending on $\alpha$ such that the following holds. 
If a measure $\eta$ on $X$, a function $f \in \Ccal^{0,\alpha}(X)$ and $t \in {(0,1/2)}$ satisfy
\[\abse{\int_X f \dd \eta - \int_X f \dd \mes_X} \geq t \norm{f}_{0,\alpha}\]
then there exist $a_0 \in \Z^d$ with $\norm{a_0} \leq t^{-C}$ and $f_0 \in \Ccal^{0,\alpha}(X) \cap \Hcal_{a_0}$ such that
\[\abse{\int_X f_0 \dd \eta - \int_X f_0 \dd \mes_X} \geq t^C\norm{f_0}_{0,\alpha}.\]
\end{lemm}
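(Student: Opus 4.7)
The plan is to decompose $f$ along the fibers of the principal $S$\dash{}bundle $\pi \colon X \to Y$, approximate it by a finite Fourier sum, and extract a dominant frequency by pigeonhole. Normalize $\norm{f}_{0,\alpha} = 1$, and set $\nu = \eta - \mes_X$, a signed measure of total variation at most $2$ with $\abs{\nu(f)} \geq t$. For each $a \in \Z^d$, let
\[
f_a(x) := \int_S \chi_a(s)\, f(sx) \dd \mes_S(s),
\]
which is the orthogonal projection of $f$ onto $\Hcal_a$. Because $S$ is compact and acts smoothly on the compact manifold $X$, there is a constant $L = L(X) \geq 1$ such that $d(sx, sy) \leq L\, d(x, y)$ for all $s \in S$ and $x, y \in X$; in particular $\norm{f_a}_{0,\alpha} \leq L^\alpha$ for every $a$.

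For the decay in $a$, substituting $s \mapsto zs$ in the integral and subtracting yields, for every $z \in Z$,
\[
(\chi_a(z) - 1)\, f_a(x) = \int_S \chi_a(s)\bigl(f(z^{-1} s x) - f(s x)\bigr) \dd \mes_S(s).
\]
Given $a \in \Z^d \setminus \{0\}$, I pick $z \in Z$ projecting to $a/(2\norm{a}^2)$ in the universal cover of $S$, so that $\chi_a(z) = -1$ and $\sup_{x \in X} d(x, zx) \lesssim \norm{a}^{-1}$ (using the smoothness of the $Z$\dash{}action on compact $X$). The H\"older bound then gives $\norm{f_a}_\infty \lesssim \norm{a}^{-\alpha}$.

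To replace $f$ by a finite sum, I convolve along the fibers with the $d$\dash{}dimensional Fej\'er kernel $K_M$ on $S$, where $d = \dim Z$. This is a non-negative trigonometric polynomial of unit $L^1$ mass with $\widehat{K_M}$ supported in $\norm{a}_\infty \leq M$ and taking values in $[0,1]$, whose mass concentrates near the identity with the standard H\"older-approximation rate. Setting
\[
g_M(x) := \int_S K_M(s)\, f(sx) \dd \mes_S(s) = \sum_{\norm{a}_\infty \leq M} \widehat{K_M}(a)\, f_a(x),
\]
one obtains $\norm{f - g_M}_\infty \lesssim M^{-\alpha}$ using the Lipschitz comparison above. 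Choosing $M = \lceil c\, t^{-1/\alpha} \rceil$ with $c$ large enough ensures $\norm{f - g_M}_\infty \leq t/4$, hence $\abs{\nu(g_M)} \geq t/2$. Since $\abs{\widehat{K_M}(a)} \leq 1$ and the sum has $O(M^d)$ terms, pigeonhole produces some $a_0$ with $\norm{a_0} \leq M$ and $\abs{\nu(f_{a_0})} \gtrsim t/M^d$. Combined with $\norm{f_{a_0}}_{0,\alpha} \leq L^\alpha$ and $\norm{a_0} \lesssim t^{-1/\alpha}$, this gives the desired conclusion with $f_0 := f_{a_0}$, once $C$ is chosen large enough in terms of $\alpha$ and $d$ (using $t < 1/2$ to absorb absolute constants).

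The main technical ingredient is the Fej\'er approximation step: it must simultaneously yield the sup-norm rate $O(M^{-\alpha})$ for H\"older functions and present $g_M$ as an explicit finite linear combination of the $f_a$'s with uniformly bounded coefficients, so that the pigeonhole loss is only the polynomial factor $M^d$. The trade-off between the $M^{-\alpha}$ approximation error and the $M^d$ pigeonhole loss is exactly what fixes the final dependence $C \asymp 1 + d/\alpha$.
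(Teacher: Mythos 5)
Your argument is essentially the paper's own proof: average $f$ against characters of the fiber torus $S$ to project onto the subspaces $\Hcal_a$, approximate $f$ in sup norm by the Fej\'er-smoothed function $g_M$ with $M \asymp t^{-1/\alpha}$, and pigeonhole among the $O(M^d)$ frequencies, using that the projections satisfy $\norm{f_a}_{0,\alpha} \ll \norm{f}_{0,\alpha}$ uniformly in $a$ (the decay estimate $\norm{f_a}_\infty \ll \norm{a}^{-\alpha}$ you derive is never used). The one inaccuracy is the endpoint $\alpha = 1$, which the lemma covers: the Fej\'er kernel only gives $\norm{f - g_M}_\infty \ll \frac{\log M}{M}\norm{f}_{0,1}$ rather than $M^{-1}\norm{f}_{0,1}$, so your claimed rate $M^{-\alpha}$ fails there; this is harmless --- take $M$ slightly larger, or note that on the compact $X$ a Lipschitz function is $\alpha'$-H\"older with comparable norm for any $\alpha' < 1$ --- and the paper indeed treats $\alpha = 1$ separately with the $\frac{\log N}{N}$ bound.
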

Specialising this lemma to the case where $X$ is a torus and $Y$ is a point, we can recover Lemma 4.5 in Boyer~\cite{Boyer}. Our proof is slightly shorter.

Let $\mes_S$ denote the normalised Haar measure on $S$. 
For $a \in \Z^d$, define for any $f \in \Ccal^0(X)$,
\[F_a f (x) = \int_S \chi_a(z) (U(z)f)(x) \dd \mes_S(z).\]
It is readily check that $F_a f \in \Hcal_a$. It preserves $\Ccal^{0,\alpha}(X)$ for any  $\alpha \in {(0,1]}$.
Moreover, since $\Lip_X$ is continuous and $S$ is compact, we have, uniformly in $a$,
\begin{equation}
\label{eq:Fa0alpha}
\forall f \in \Ccal^{0,\alpha}(X),\quad \norm{F_a f}_{0,\alpha} \ll \norm{f}_{0,\alpha}.
\end{equation}
Define also the Féjer kernel: for $N \in \N$,
\[\Fcal_N = \sum_{(a_i) \in [-N , N]^d} \, \Biggl(\prod_{i=1}^d \Bigl(1 - \frac{\abs{a_i}}{N}\Bigr) \Biggr) F_{(a_i)}.\]
\begin{lemm}
\label{lm:Fejer}
Let $\alpha \in {(0,1)}$ and $N \in \N$. For any $f \in \Ccal^{0,\alpha}(X)$,
\begin{equation}
\label{eq:Fejer}
\norm{\Fcal_N f - f}_{\infty} \ll N^{-\alpha} \norm{f}_{0,\alpha}.
\end{equation}
\end{lemm}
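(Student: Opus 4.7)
The approach is to recognize $\mathcal{F}_N$ as an averaging operator against the classical $d$-dimensional Féjer kernel on the fiber torus $S$, and then reduce the bound to the standard one-dimensional Féjer kernel estimate.

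First I would swap the finite sum and the integral defining $F_a$ to write
\[\mathcal{F}_N f(x) = \int_S K_N(z)\, (U(z)f)(x)\, \dd \mathrm{m}_S(z),\]
where $K_N(z) = \prod_{i=1}^d \Bigl( \sum_{a_i = -N}^{N} \bigl(1 - \tfrac{|a_i|}{N}\bigr) \chi_{a_i}(z_i)\Bigr)$ is the product of $d$ copies of the classical one-dimensional Féjer kernel. As such, $K_N$ is nonnegative and has total mass one with respect to $\mathrm{m}_S$. Consequently,
\[\mathcal{F}_N f(x) - f(x) = \int_S K_N(z) \bigl((U(z)f)(x) - f(x)\bigr) \dd \mathrm{m}_S(z).\]

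Next, I would quantify the integrand using Hölder continuity. Since $X$ is compact and $S \subset N$ acts on $X$ smoothly by left translations, there exists a constant $L \geq 1$ (depending only on the Riemannian metric on $X$ and a chosen invariant metric $d_S$ on $S$) such that $d(z^{-1}x, x) \leq L \cdot d_S(z, e_S)$ for all $x \in X$ and $z \in S$. Combined with the Hölder bound $|f(z^{-1}x) - f(x)| \leq \omega_\alpha(f)\, d(z^{-1}x, x)^\alpha$, this gives
\[\norm{\mathcal{F}_N f - f}_\infty \leq L^\alpha \omega_\alpha(f) \int_S K_N(z)\, d_S(z, e_S)^\alpha\, \dd \mathrm{m}_S(z).\]
Identifying $S$ with $\mathbb{T}^d$ and taking $d_S$ to be the $\ell^\infty$ product metric, one has $d_S(z, e_S)^\alpha \leq \sum_{i=1}^{d} |z_i|^\alpha$, where $|z_i|$ denotes the distance of $z_i$ to $0$ in $\mathbb{T}$. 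Fubini and the product structure of $K_N$ then yield
\[\int_S K_N(z)\, d_S(z, e_S)^\alpha\, \dd \mathrm{m}_S(z) \leq d \int_{\mathbb{T}} K_N^{(1)}(\theta)\, |\theta|^\alpha\, \dd \theta,\]
where $K_N^{(1)}$ is the one-dimensional Féjer kernel.

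Finally, it remains to show the classical estimate $\int_{\mathbb{T}} K_N^{(1)}(\theta) |\theta|^\alpha \dd \theta \ll_\alpha N^{-\alpha}$. Using the pointwise bounds $K_N^{(1)}(\theta) \leq N$ for $|\theta| \leq 1/N$ and $K_N^{(1)}(\theta) \leq C/(N \theta^2)$ for $1/N \leq |\theta| \leq 1/2$ (obtained from $K_N^{(1)}(\theta) = \sin^2(\pi N \theta)/(N \sin^2(\pi \theta))$), one splits the integral at $|\theta| = 1/N$: the near-origin contribution is $\ll N \cdot N^{-1-\alpha} = N^{-\alpha}$, and the tail contribution is $\ll N^{-1} \int_{1/N}^{1/2} \theta^{\alpha - 2} \dd \theta \ll_\alpha N^{-\alpha}$, where the condition $\alpha < 1$ is used crucially so that $\theta^{\alpha-2}$ is integrable only with a factor growing like $N^{1-\alpha}$. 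Combining, the lemma follows. The only slightly delicate step is the uniform Lipschitz control $d(z^{-1}x, x) \leq L \cdot d_S(z, e_S)$, which is a standard consequence of the smoothness of the $S$-action and the compactness of $X$; everything else is routine.
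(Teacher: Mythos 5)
Your proof is correct and follows essentially the same route as the paper: rewrite $\Fcal_N$ as convolution with a product of one-dimensional Féjer kernels on the fiber torus, use smoothness of the translation action and compactness to compare $d(z^{-1}x,x)$ with the distance on $S$, and reduce to the moment bound $\int K_N^{(1)}(\theta)\abs{\theta}^\alpha \dd\theta \ll_\alpha N^{-\alpha}$. The only difference is that you prove this last one-dimensional estimate directly by splitting at $\abs{\theta}=1/N$ (which is fine, and where $\alpha<1$ enters), whereas the paper simply cites it from Butzer--Nessel.
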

Here the implied constant depend on the choice of the basis on $Z \cap \Lambda$.
\begin{proof} 
For $t \in \Tbb = \R/\Z$, write $e(t) = e^{2\pi i t}$. 
While defining $\chi_a$, we had chosen a basis of the lattice $Z\cap \Lambda$. 
This choice induces an isomorphism $\phi \colon \Tbb^d \to S$ so that for all $t = (t_1,\dotsc, t_d) \in \Tbb^d$ and all $a = (a_1, \dotsc, a_d) \in \Z^d$,
\[\chi_a(\phi(t)) = \prod_{i=1}^d e(a_it_i).\]

For $N \geq 1$, denote by $K_N \colon \Tbb \to \R$ the $N$-th Féjer kernel on the circle, i.e.
\[\forall t\in \Tbb,\; K_N(s) = \sum_{a = -N + 1}^{N-1} \Bigl(1 - \frac{\abs{a}}{N}\Bigr) e(at) = \frac{1}{N} \Bigl( \frac{\sin(N\pi t)}{\sin(\pi t)} \Bigr)^2.\]

Let $f \in \Ccal^{0,\alpha}(X)$.
It follows from the definition that for all $x \in X$,
\[\Fcal_N f(x) = \int_{\Tbb^d} \biggl(\prod_{i=1}^d K_N(t_i)\biggr) f(\phi(t_1,\dotsc,t_d)^{-1}x) \dd  t_1 \dotsm \dd t_d.\]

We fix a Riemannian distance $d_{\Tbb^d}$  on $\Tbb^d$.
Since $\phi$ is smooth and both $\Tbb^d$ and $X$ are compact,
\[\forall t \in \Tbb^d,\; \forall x \in X,\quad d(\phi(t)^{-1}x,x) \ll d_{\Tbb^d}(t,0)\]
where the implied constant depends on the choice of $d_{\Tbb^d}$.
It follows that  
\begin{align*}
\abs{f(x) - \Fcal_N f(x)} &= \int_{\Tbb^d}  \biggl(\prod_{i=1}^d K_N(t_i)\biggr) \abs{f(\phi(t_1,\dotsc,t_d)^{-1}x) - f(x)} \dd t_1 \dotsm \dd t_d\\
&\leq  \int_{\Tbb^d} \prod_{i=1}^d K_N(t_i) \norm{f}_{0,\alpha} d(\phi(t_1,\dotsc,t_d)^{-1}x,x)^\alpha \dd t_1 \dotsm \dd t_d\\
&\ll \norm{f}_{0,\alpha} \int_{\Tbb^d} \biggl(\prod_{i=1}^d K_N(t_i)\biggr) d_{\Tbb^d} ((t_1,\dotsc,t_d),0)^\alpha \dd t_1 \dotsm \dd t_d
\end{align*}
Note that for $(t_1,\dotsc,t_d) \in {[-\frac{1}{2},\frac{1}{2}]}^d$, we have $d_{\Tbb^d} ((t_1,\dotsc,t_d),0)^\alpha \ll t_1^\alpha + \dots + t_d^\alpha$.
Hence
\begin{equation*}
\int_{\Tbb^d} \biggl(\prod_{i=1}^d K_N(t_i)\biggr) d_{\Tbb^d} ((t_1,\dotsc,t_d),0)^\alpha \dd t_1 \dotsm \dd t_d \ll \int_{-\frac{1}{2}}^{\frac{1}{2}} K_N(t) \abs{t}^\alpha \dd  t.
\end{equation*}
The last quantity is bounded by $N^{-\alpha}$, by \cite[Lemma 1.6.4]{ButzerNessel}.
\end{proof}

\begin{proof}[Proof of Lemma~\ref{lm:finda0}]
We first prove the lemma for $\alpha \in {(0,1)}$. 
Let $C$ denote the implied constant in \eqref{eq:Fejer}. Pick an integer $N$ such that 
\[\frac{t}{8C}\leq N^{-\alpha} \leq \frac{t}{4C}.\]
By Lemma~\ref{lm:Fejer}, we get
\[\norm{\Fcal_N f - f}_{\infty} \leq \frac{t}{4} \norm{f}_{0,\alpha}.\]
Combined with the assumption, this gives
\[\abse{\int_X \Fcal_N f \dd (\eta - \mes_X)} \geq \frac{t}{2}\norm{f}_{0,\alpha}.\]
Then by the definition of the Féjer kernel,
\[\frac{t}{2}\norm{f}_{0,\alpha} \leq \sum_{a \in {[-N,N]}^d} \abse{\int_X F_a f \dd (\eta - \mes_X)}\]
Hence there exists $a \in {[-N,N]}^d$ such that
\[\abse{\int_X F_a f \dd (\eta - \mes_X)} \geq \frac{t}{2(2N+1)^d}\norm{f}_{0,\alpha} \gg t^{1+\frac{d}{\alpha}}\norm{f}_{0,\alpha}.\]
Thus, on account of \eqref{eq:Fa0alpha}, $f_0 = F_a f$ satisfies the required properties.

If $\alpha = 1$, then \eqref{eq:Fejer} in Lemma~\ref{lm:Fejer} becomes (cf. \cite[Lemma 1.6.4]{ButzerNessel})
\[\norm{\Fcal_N f - f}_{\infty} \ll \frac{\log N}{N} \norm{f}_{0,1}.\]
The rest of the proof is similar.
\end{proof}

\subsection{Essential growth rate}
Recall the definition of the quantity $\tau_Z(\mu)$ from the introduction.
Consider a Borel probability measure $\mu$ on $\Aut(Z)$ where $Z$ is a connected simply-connected abelian Lie group. 
For $\kappa > 0$, let 
\[\tau_Z(\mu,\kappa) = \limsup_{m \to +\infty} \frac{1}{m} \log \min\{\,\# A \mid A \subset \Aut(Z) \text{ with } \mu^{*m}(A) \geq 1 - e^{-\kappa m}\,\}.\]
This quantity is non-decreasing in $\kappa$. 
Let 
\[\tau_Z(\mu) = \lim_{\kappa \to 0} \tau_Z(\mu,\kappa).\]
We define similarly $\tau_Z(\mu)$ if, more generally, $\mu$ is a measure on a group which acts measurably on $Z$ by automorphisms. 

Under an exponential moment assumption, this quantity is finite. Moreover it can be bounded in terms of the top Lyapunov exponent of $\mu$. 
\begin{lemm}
Assume that the support of $\mu$ preserves a lattice of $Z$. Assume that $\mu$ has a finite exponential moment. Then
\[\tau_Z(\mu) \leq (d^2 - 1) \lambda_{1,Z}(\mu)\]
where $d = \dim Z$ and $\lambda_{1,Z}(\mu)$ denote the top Lyapunov exponent of the linear random walk defined by $\mu$ on $Z$.
\end{lemm}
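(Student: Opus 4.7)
The plan is to exhibit, for every $\epsilon > 0$, a ``typical'' subset $A_m \subset \GL_d(\Z)$ of cardinality at most $e^{m(d^2-1)(\lambda_{1,Z}(\mu)+\epsilon)+O(1)}$ with $\mu^{*m}(A_m) \geq 1 - e^{-\kappa m}$ for some $\kappa = \kappa(\epsilon) > 0$. The natural candidate is
\[A_m = \{\, g \in \GL_d(\Z) : \norm{g} \leq e^{m(\lambda_{1,Z}(\mu)+\epsilon)} \,\},\]
so the task splits into a probabilistic upper tail bound for $\norm{g_m \cdots g_1}$ and a purely combinatorial count of integer matrices of bounded norm in $\GL_d(\Z)$.

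For the tail bound, the key input is that Fekete's lemma applied to the subadditive sequence $\Ebb[\log\norm{g_n\cdots g_1}]$ (subadditivity follows from $\norm{gh} \leq \norm{g}\norm{h}$) yields $\frac{1}{n}\Ebb[\log\norm{g_n\cdots g_1}] \to \lambda_{1,Z}(\mu)$, so I can fix a block length $k$ with $\Ebb[\log\norm{B_1}] \leq k(\lambda_{1,Z}(\mu)+\epsilon/2)$, where $B_j = g_{jk}\cdots g_{(j-1)k+1}$ are iid. By submultiplicativity, $\log\norm{B_j}$ is dominated by $\sum_{i=(j-1)k+1}^{jk}\log\norm{g_i}$, and the latter has a finite exponential moment by hypothesis; hence $\log\norm{B_j}$ itself does. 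Cramér's large-deviation inequality applied to the iid sequence $(\log\norm{B_j})_{j \leq \lfloor m/k \rfloor}$ then provides some $\kappa > 0$ with
\[\mu^{*m}\bigl(\{g : \norm{g} > e^{m(\lambda_{1,Z}(\mu)+\epsilon)}\}\bigr) \leq e^{-\kappa m},\]
the leftover $r < k$ factors being absorbed into the slack via a crude union bound on $\max_{i \leq m}\log\norm{g_i}$, which has exponentially small tail by the moment assumption.

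For the counting bound, I claim $\#\{g \in \GL_d(\Z) : \norm{g} \leq R\} \ll_d R^{d^2-1}$. Writing $g = [a_1,\dots,a_d]$ by columns, each $a_i \in \Z^d$ with $\norm{a_i} \leq R$, there are $O(R^{d(d-1)})$ choices for $(a_1,\dots,a_{d-1})$. When this tuple is linearly independent, the constraint $\det g = \pm 1$ becomes $\ell(a_d) = \pm 1$ for a nonzero linear form $\ell \in (\Z^d)^*$ whose coefficients are the signed $(d-1)\times(d-1)$ minors of $(a_1,\dots,a_{d-1})$. The integer solutions form at most two cosets of the rank-$(d-1)$ sublattice $\ker\ell \cap \Z^d$, whose covolume in $\ker\ell$ equals $\norm{\ell_{\mathrm{prim}}} \geq 1$ (where $\ell_{\mathrm{prim}}$ is the primitive form with the same kernel), so each coset contributes $\ll R^{d-1}$ points with $\norm{a_d}\leq R$. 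Linearly dependent $(a_1,\dots,a_{d-1})$ contribute nothing. Multiplying gives $O(R^{d(d-1)+(d-1)}) = O(R^{d^2-1})$.

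Combining the two estimates, $\tau_Z(\mu,\kappa) \leq (d^2-1)(\lambda_{1,Z}(\mu)+\epsilon)$ for the $\kappa = \kappa(\epsilon)$ above; taking $\epsilon \to 0$ and using monotonicity of $\tau_Z(\mu,\kappa)$ in $\kappa$ gives the desired $\tau_Z(\mu) \leq (d^2-1)\lambda_{1,Z}(\mu)$. The main subtlety is the tail step: a direct Cramér estimate for $\sum\log\norm{g_i}$ would only yield $(d^2-1)\Ebb[\log\norm{g_1}]$ on the right, which can be strictly larger than $(d^2-1)\lambda_{1,Z}(\mu)$; the device of grouping into long blocks so as to exploit the Fekete convergence is what closes this gap.
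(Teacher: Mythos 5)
Your proposal is correct and follows essentially the same route as the paper: the paper reduces to $Z=\R^d$ with $\Supp(\mu)$ preserving $\Z^d$, invokes its appendix large-deviation estimate (Theorem~\ref{thm:LD}, proved by exactly your block/Cram\'er device to beat the gap between $\Ebb[\log\norm{g_1}]$ and $\lambda_{1,Z}(\mu)$), and takes $A=\GL_d(\Z)\cap B\bigl(0,e^{(\lambda_{1,Z}(\mu)+\omega)m}\bigr)$, using implicitly the bound $\#\{g\in\GL_d(\Z):\norm{g}\le R\}\ll R^{d^2-1}$ that you spell out. Your only additions are making that lattice-point count explicit (where the decisive point is that $\ker\ell\cap\Z^d\subset\Z^d$ has all minima $\ge 1$, rather than the covolume per se) and re-proving the large-deviation step inline instead of citing it.
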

\begin{proof}
Without loss of generality, we assume $Z = \R^d$ and that $\Supp(\mu)$ preserves the lattice $\Z^d$.
By the large deviation estimate (Theorem~\ref{thm:LD} proved in the Appendix), for any $\omega > 0$ there is some $\kappa > 0$ such that for all $m$ sufficiently large
\[\mu^{*m}(B(0,e^{(\lambda_{1,Z}(\mu)+\omega)m})) \geq 1 - e^{-\kappa m}.\]
By taking $A = \GL_d(\Z) \cap B(0,e^{(\lambda_{1,Z}(\mu)+\omega)m})$, we get 
\[\tau_Z(\mu) \leq \tau_Z(\mu,\kappa) \leq (d^2 - 1)(\lambda_{1,Z}(\mu)+\omega).\]
We obtain the desired inequality by letting $\omega \to 0$.
\end{proof}

\subsection{The Cauchy-Schwarz argument}
The heart of the proof of Proposition~\ref{pr:XtoY} is a use of the Cauchy-Schwarz inequality.
Let
\[
C_\beta = \int_{\Aff(X)} \Lip_X(g)^\beta \dd \mu(g).
\]

\begin{lemm}
\label{lm:CSargument}
Assume that $\mu$ has a finite $\beta$\dash{}exponential moment (i.e.\ that $C_\beta<\infty$), and that 
\[\tau_Z(\mu) < 2\sigma_{X,Y}(\mu).\]
Then for every $0 < \alpha \leq \min\{1,\beta\}$, there exists a constant $m_0$ depending on $\mu$, and $C$ depending on $\theta_*\mu$ and $ 2\sigma_{X,Y}(\mu) - \tau_Z(\mu)$, such that the following holds.
Let $t \in {(0,1/2)}$ and $f \in \Ccal^{0,\alpha}(X) \cap \Hcal_{a_0}$ with $a_0 \in \Z^d\setminus \{0\}$.
Let $\eta$ be a Borel probability measure on $X$.
If 
\begin{equation}
\label{eq:fRW}
\abse{\int_X f \dd\mu^{*m} * \eta} \geq t\norm{f}_{0,\alpha},
\end{equation}
for some $m \geq \max(C\log\frac{1}{t}, m_0)$, then there exists $f_1 \in \Ccal^{0,\alpha}(X)\cap \Hcal_0$ such that
\[\abse{\int_X f_1 \dd \eta - \int_X f_1 \dd \mes_X} \geq (2C_\beta)^{-2 m} \, t^2 \, \norm{f_1}_{0,\alpha}.\]
\end{lemm}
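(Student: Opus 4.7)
The plan is a Cauchy--Schwarz ``doubling'' argument. Split $m = m'+m''$ and set $\eta' := \mu^{*m''}*\eta$, so that $\int U^*(\mu)^m f\,d\eta = \int U^*(\mu)^{m'}f\,d\eta'$. Applying Cauchy--Schwarz against the probability measure $\eta'$ and expanding
\[\abs{U^*(\mu)^{m'}f}^2 = \Ebb_{g_1,g_2\sim\mu^{*m'}}[h_{g_1,g_2}], \qquad h_{g_1,g_2} := U^*(g_1)f\cdot\overline{U^*(g_2)f},\]
yields, after unfolding,
\[t^2 \norm{f}_{0,\alpha}^2 \leq \Ebb_{g_1,g_2}\!\left[\int U^*(\mu)^{m''}h_{g_1,g_2}\,d\eta\right].\]

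The key structural fact is that $h_{g_1,g_2} \in \Hcal_{\theta(g_1)^{-1}a_0-\theta(g_2)^{-1}a_0}$. Let $E$ denote the ``collision'' event $\{\theta(g_1)^{-1}a_0 = \theta(g_2)^{-1}a_0\}$: on $E$ the product sits in $\Hcal_0$; on $E^c$ it lies in $L^2_{X,Y}$ and therefore $\int U^*(\mu)^{m''}h_{g_1,g_2}\,d\mes_X = 0$. Subtracting the $\mes_X$-mean and using $\mes_X$-invariance of $U^*$, the expected mean equals $\norm{U^*(\mu)^{m'}f}_{L^2}^2 \leq e^{-2\sigma_{X,Y}(\mu)m'(1-o(1))}\norm{f}_{0,\alpha}^2$, which is at most $\tfrac{1}{4}t^2\norm{f}_{0,\alpha}^2$ once $m'$ exceeds a suitable multiple of $\log(1/t)/\sigma_{X,Y}(\mu)$, leaving
\[\Ebb\!\left[\int U^*(\mu)^{m''}h_{g_1,g_2}\,d(\eta-\mes_X)\right] \geq \tfrac{3}{4}t^2\norm{f}_{0,\alpha}^2.\]

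The main technical step is to bound the non-collision contribution. On $E^c$ the spectral gap gives $\norm{U^*(\mu)^{m''}h}_{L^2} \leq e^{-\sigma_{X,Y}(\mu)m''}\norm{h}_{L^2}$, but to integrate against the arbitrary measure $\eta$ one needs sup-norm decay. I would combine this with a H\"older--$L^2$ interpolation on the compact nilmanifold of the form $\norm{\phi}_\infty \lesssim \norm{\phi}_{0,\alpha}^{1-\delta}\norm{\phi}_{L^2}^\delta$ for some $\delta = \delta(\dim X,\alpha) > 0$; the H\"older factor is controlled via the product rule~\eqref{eq:product_Holder} and $\norm{U^*(g)F}_{0,\alpha} \leq \Lip_X(g)^\alpha\norm{F}_{0,\alpha}$, while expectation over $(g_1,g_2)$ is handled by submultiplicativity~\eqref{eq:product_Lip} and the $\beta$-exponential moment (using $\alpha \leq \beta$). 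Choosing the ratio $m''/m'$ large enough --- the threshold being set precisely by $2\sigma_{X,Y}(\mu)-\tau_Z(\mu)$, since the number of distinct characters that can appear as $\theta(g_1)^{-1}a_0-\theta(g_2)^{-1}a_0$ is of order $e^{2\tau_Z(\mu)m'}$ and must be beaten by $e^{-\sigma_{X,Y}(\mu)m''}$ --- makes the non-collision contribution at most $\tfrac{1}{4}t^2\norm{f}_{0,\alpha}^2$.

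One is then left with $\bigl|\Ebb[\indic_E \int U^*(\mu)^{m''}h_{g_1,g_2}\,d(\eta-\mes_X)]\bigr| \geq \tfrac{1}{2}t^2\norm{f}_{0,\alpha}^2$. Restricting further to $\{\Lip_X(g_1), \Lip_X(g_2) \leq L\}$ with $L \asymp C_\beta^{m'/\beta}$ costs at most a bounded factor by Markov applied to the $\beta$-moment, and a pigeonhole argument then produces a specific pair $(g_1,g_2)$ in this truncated collision event realising $\abs{\int U^*(\mu)^{m''}h_{g_1,g_2}\,d(\eta-\mes_X)} \gtrsim t^2\norm{f}_{0,\alpha}^2$. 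Setting $f_1 := U^*(\mu)^{m''}h_{g_1,g_2}$, which lies in $\Hcal_0 \cap \Ccal^{0,\alpha}(X)$ because $h_{g_1,g_2} \in \Hcal_0$ on $E$ and $U^*$ preserves this subspace, the H\"older norm is controlled by $\norm{f_1}_{0,\alpha} \lesssim C_\alpha^{m''}L^{2\alpha}\norm{f}_{0,\alpha}^2 \lesssim C_\beta^{m''+2m'}\norm{f}_{0,\alpha}^2 \leq (2C_\beta)^{2m}\norm{f}_{0,\alpha}^2$, and the claimed inequality $|\int f_1\,d(\eta-\mes_X)| \geq (2C_\beta)^{-2m}t^2\norm{f_1}_{0,\alpha}$ follows. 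The main obstacle is the $L^2$-to-$L^\infty$ conversion in the non-collision step: the spectral-gap decay must compete simultaneously with the H\"older blow-up of the random walk \emph{and} the orbit entropy $e^{O(\tau_Z(\mu)m')}$, which is precisely where the hypothesis $\tau_Z(\mu) < 2\sigma_{X,Y}(\mu)$ is used and why the constant $C$ depends on $2\sigma_{X,Y}(\mu)-\tau_Z(\mu)$.
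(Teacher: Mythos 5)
Your skeleton (Cauchy--Schwarz doubling at time $m'$, the observation that $U^*(g_1)f\,\overline{U^*(g_2)f}$ lies in $\Hcal_{\theta(g_1)^{-1}a_0-\theta(g_2)^{-1}a_0}$, hence in $\Hcal_0$ exactly on the collision event, and control of the Haar mean by $\norm{U^*(\mu)^{m'}f}_2\le\norm{U_{X,Y}(\mu)^{m'}}$ since $a_0\neq 0$) matches the heuristic in the paper's introduction, but the step you yourself flag as the main technical one does not work as proposed. To bound the non-collision contribution against the \emph{arbitrary} measure $\eta$ you need sup-norm smallness of $\psi=U^*(\mu)^{m''}h_{g_1,g_2}$, and the interpolation $\norm{\psi}_\infty\lesssim\norm{\psi}_{0,\alpha}^{1-\delta}\norm{\psi}_2^{\delta}$ gives at best $(C_\beta^{m''}L^{2\alpha})^{1-\delta}e^{-\delta\sigma m''}$: the H\"older norm of the evolved function grows at rate $\log C_\beta$ (the Lipschitz expansion of the walk), which is completely unrelated to, and in general much larger than, the gap $\sigma_{X,Y}(\mu)$, so this quantity need not be small for any choice of the split $m=m'+m''$ (both blocks scale with $m$, and no hypothesis of the lemma bounds $C_\beta$ in terms of $\sigma_{X,Y}(\mu)$). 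Relatedly, your stated use of the hypothesis --- that the character count $e^{2\tau_Z(\mu)m'}$ ``must be beaten by $e^{-\sigma_{X,Y}(\mu)m''}$'' --- does not correspond to any estimate actually appearing in your scheme: once you bound non-collision pairs uniformly, the character count never enters, and the pigeonhole at the end does not need a collision-probability lower bound either. So the place where $\tau_Z(\mu)<2\sigma_{X,Y}(\mu)$ is supposed to act is left without a mechanism.

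The paper's proof avoids any $L^2\to L^\infty$ conversion. It keeps a single block of length $m$ and decomposes $\mu^{*m}=\sum_a\mu^{*m}(P_a)\mu^{(m)}_a$ according to the class $a=\theta(g)^{-1}\cdot a_0$; the candidates are $f_1=\absbig{f^{(m)}_a}^2$ with $f^{(m)}_a=U^*(\mu^{(m)}_a)f\in\Hcal_a$, so membership in $\Hcal_0$ is automatic and the unknown-measure side stays in the nonnegative form $\sum_a\mu^{*m}(P_a)\int\absbig{f^{(m)}_a}^2\dd\eta\ge t^2$ after two applications of Cauchy--Schwarz. One then only has to show the corresponding Haar quantity is small: restricting to an entropy set $A$ with $\#A\le e^{\tau m}$ (this is where $\tau_Z(\mu)$ enters), Cauchy--Schwarz together with the exact orthogonality $\normbig{U^*(\mu)^m f}_2^2=\sum_a\mu^{*m}(P_a)^2\normbig{f^{(m)}_a}_2^2$ gives $\sum_{a\in A}\mu^{*m}(P_a)\normbig{f^{(m)}_a}_2^2\le\sqrt{\#A}\,\normbig{U_{X,Y}(\mu)^m}\le e^{-(\sigma-\tau/2)m}$, and this is exactly where $\tau_Z(\mu)<2\sigma_{X,Y}(\mu)$ is used; the H\"older norms are only needed for the final normalization of $f_1$ (after discarding a set $B_\kappa$ of classes by Markov), never to make anything small against $t$. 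You would need to restructure your argument along these lines (or find some genuinely different substitute for the sup-norm step) for the proof to go through.
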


\begin{proof}
Without loss of generality, we may assume $\norm{f}_{0,\alpha} = 1$.

We are going the partition $\Gamma\ltimes N$ according to the action on $\Z^d$.
For $a \in \Z^d$, define
\[P_a = \{\, g \in \Gamma \ltimes N \mid  \theta(g)^{-1}\cdot a_0 = a\,\}.\]
For $m\geq 1$ and $a \in \Z^d$ define $\mu^{(m)}_a$ to be renormalized restriction of $\mu^{*m}$ to $P_a$ so that we have 
\[\mu^{*m} = \sum_{a \in \Z^d} \mu^{*m}(P_a) \mu^{(m)}_a.\]
Define also
\[ f^{(m)}_a = U^*(\mu^{(m)}_a) f\]
so that
\begin{equation}
\label{eq:fassum}
U^*(\mu)^m f = \sum_{a \in \Z^d} \mu^{*m}(P_a) f^{(m)}_a.
\end{equation}
From the definition of $P_a$, we know that $f^{(m)}_a \in \Hcal_a$. Hence the sum in \eqref{eq:fassum} is an orthogonal one. In particular,
\begin{equation}
\label{eq:fassum'}
    \normbig{U^*(\mu)^m f}_2^2 = \sum_{a \in \Z^d} \mu^{*m}(P_a)^2 \norm{f^{(m)}_a}_2^2.
\end{equation}
From $f^{(m)}_a \in \Hcal_a$, follows that $\absbig{f^{(m)}_a}^2 \in \Hcal_0$.
The functions $\absbig{f^{(m)}_a}^2$ are going to be our candidates for $f_1$.

The core of the argument is the following applications of the Cauchy-Schwarz inequality.
From \eqref{eq:fRW} and \eqref{eq:fassum}, we get
\[t \leq \abse{\int_X U^*(\mu)^m f \dd\eta} \leq \sum_{a \in \Z^d} \mu^{*m}(P_a) \abse{\int_X f^{(m)}_a \dd \eta}.\]
By the Cauchy-Schwarz inequality for the sum,
\[t^2 \leq \sum_{a \in \Z^d} \mu^{*m}(P_a) \abse{\int_X f^{(m)}_a \dd \eta}^2.\]
By the Cauchy-Schwarz inequality for the integral,
\begin{equation}
\label{eq:t2fma}
t^2 \leq \sum_{a \in \Z^d} \mu^{*m}(P_a) \int_X \absbig{f^{(m)}_a}^2 \dd \eta.
\end{equation}
We want to compare the right hand side of the above equation (where the integration is over the unknown measure $\eta$) to the following analogous expression involving $\norm{\cdot}_2^2$, i.e.\ when the integration is with respect to the Haar measure $\mes_X$:
\[\sum_{a \in \Z^d} \mu^{*m}(P_a) \normbig{f^{(m)}_a}_2^2 = \sum_{a \in \Z^d} \mu^{*m}(P_a) \int_X \absbig{f^{(m)}_a}^2 \dd \mes_X.\]

But first, we need throw away the $a$'s for which the $\normbig{f^{(m)}_a}_{0,\alpha}$ is too large.
From the exponential moment assumption and \eqref{eq:product_Lip},
\[\sum_{a \in \Z^d} \mu^{*m}(P_a) \int_{P_a} \Lip_X(g)^\beta \dd \mu^{(m)}_a(g) =  \int_{\Aff(X)} \Lip_X(g)^\beta \dd \mu^{*m}(g) \leq C_\beta^m.\]
By the Markov inequality, we have for any $\kappa > 0$,
\[ \mu^{*m}*\delta_{a_0}(B_\kappa) \leq e^{-\kappa m}\]
where
\[B_\kappa = \Bigl\{\, a \in \Z^d \mid \int_{P_a} \Lip_X(g)^\beta \dd \mu^{(m)}_a(g) > C_\beta^{m}e^{\kappa m} \,\Bigr\}.\]
For $a \in \Z^d \setminus B_\kappa$, we have, since $\alpha \leq \beta$,
\begin{align}
\normbig{f^{(m)}_a}_{0,\alpha} &\leq \int_{P_a} \norm{U^*(g)f}_{0,\alpha} \dd \mu^{(m)}_a(g)\notag\\
&\leq \int_{P_a} \Lip_X(g)^\alpha \norm{f}_{0,\alpha} \dd \mu^{(m)}_a(g)\notag\\
\label{eq:Bknorm0alpha} &\leq C_\beta^m e^{\kappa m}.
\end{align}

Next, we need to exploit the assumption $\tau_Z(\mu) < 2\sigma_{X,Y}(\mu)$.
Choose $\tau > \tau_Z(\mu)$ and $\sigma < \sigma_{X,Y}(\tau)$ such that $\sigma - \tau/2 = \frac{2\sigma_{X,Y}(\mu) - \tau_Z(\mu)}{4} > 0$ and moreover,
\begin{enumerate}
\item for $m \geq m_1 = m_1(\mu)$, $\norm{U_{X,Y}(\mu)^m} \leq e^{-\sigma m}$;
\item there exists $\kappa = \kappa(\theta_*\mu) \in {(0,\frac{1}{2})}$ such that for  $m \geq m_2 = m_2(\theta_*\mu)$, there exists $A \subset \Z^d$ satisfying $\# A \leq e^{\tau m}$ and $\mu^{*m}*\delta_{a_0}(A) \geq 1 - e^{-\kappa m}$.
\end{enumerate}
Note that $\kappa$ only depends on $\theta_*\mu$ because we are letting $\langle \Supp(\mu)\rangle$ act on $Z$ via $\langle \Supp(\mu)\rangle \xrightarrow{\theta} \Gamma \to \Aut(Z)$.
By replacing $A$ by $A \setminus B_\kappa$, we may assume without loss of generality that $A \subset \Z^d\setminus B_\kappa$.

Using the fact that $\norm{U^*(\mu^{(m)}_a)} \leq 1$ (hence $\normbig{f^{(m)}_a}_2 \leq \norm{f}_2 \leq 1$), the Cauchy-Schwarz inequality, and \eqref{eq:fassum'}, we obtain 
\begin{align*}
\sum_{a \in A} \mu^{*m}(P_a) \normbig{f^{(m)}_a}_2^2 &\leq \sum_{a \in A} \mu^{*m}(P_a) \normbig{f^{(m)}_a}_2\\
&\leq \sqrt{\# A} \sqrt{\sum_{a \in A} \mu^{*m}(P_a)^2 \normbig{f^{(m)}_a}_2^2} \\
&= \sqrt{\# A} \normbig{U^*(\mu)^m f}_2\\
&\leq \sqrt{\# A} \normbig{U_{X,Y}(\mu)^m}\\
&\leq e^{-(\sigma - \tau/2)m}.
\end{align*}

Now remember \eqref{eq:t2fma}. Bounding $\norm{f^{(m)}_a}_\infty \leq \norm{f}_\infty \leq 1$ for $a \in \Z^d \setminus A$, we obtain
\[t^2 \leq e^{-\kappa m} + \sum_{a \in A} \mu^{*m}(P_a) \int_X \absbig{f^{(m)}_a}^2 \dd \eta,\]
which we rewrite as 
\[t^2 \leq e^{-\kappa m} + \sum_{a \in A} \mu^{*m}(P_a) \normbig{f^{(m)}_a}_2^2 + \sum_{a \in A} \mu^{*m}(P_a) \int_X \absbig{f^{(m)}_a}^2 \dd (\eta - \mes_X).\]
Then it follows from the above that 
\[t^2 \leq e^{-\kappa m} +  e^{-(\sigma - \tau/2)m} + \sum_{a \in A} \mu^{*m}(P_a) \int_X \absbig{f^{(m)}_a}^2 \dd (\eta - \mes_X),\]
Now if
\[m \geq 2\max\Bigl\{\frac{1}{\kappa},\frac{1}{\sigma - \tau/2}\Bigr\} \log \frac{2}{t}\]
then
\[\frac{t^2}{2} \leq \sum_{a \in A} \mu^{*m}(P_a) \int_X \absbig{f^{(m)}_a}^2 \dd (\eta - \mes_X).\]
Hence there exists $a \in A$ such that
\[\int_X \absbig{f^{(m)}_a}^2 \dd (\eta - \mes_X)\geq \frac{t^2}{2}.\]
Moreover, since $A \subset \Z^d\setminus B_\kappa$, we have by \eqref{eq:product_Holder} and \eqref{eq:Bknorm0alpha},
\[\normbig{\absbig{f^{(m)}_a}^2}_{0,\alpha} \leq \normbig{f^{(m)}_a}_{0,\alpha}^2 \leq C_\beta^{2m} e^{2 \kappa m} \leq (2C_\beta)^{2m}.\]
Thus, $f_1 = \absbig{f^{(m)}_a}^2$ satisfies the required properties, proving the lemma with $m_0 = \max\{m_1,m_2\}$ and $C = 4\max\Bigl\{\frac{1}{\kappa},\frac{4}{2\sigma_{X,Y}(\mu) - \tau_Z(\mu)}\Bigr\}$.
\end{proof}

\subsection{Proof of the key proposition}
We need one more lemma before we prove Proposition~\ref{pr:XtoY}. 
\begin{lemm}
\label{lm:gobackm}
Assume that $\mu$ has a finite $\beta$-exponential moment. 
For every $0 < \alpha \leq \min\{1,\beta\}$, there exists a constant $C \geq 1$ such that the following holds for any parameter $t \in {(0,1/2)}$ and any $m \in \N$ sufficiently large. If there exists $f_0 \in C^{0,\alpha} \cap \Hcal_0$ satisfying
\begin{equation*}
\abse{\int_X f_0 \dd \mu^{*m} * \eta - \int_X f_0 \dd \mes_X} \geq t \norm{f_0}_{0,\alpha},
\end{equation*}
then there exists $f_1 \in C^{0,\alpha} \cap \Hcal_0$ such that
\begin{equation*}
\abse{\int_X f_1 \dd \eta - \int_X f_1 \dd \mes_X} \geq e^{-Cm} t^C \norm{f_1}_{0,\alpha}.
\end{equation*}
\end{lemm}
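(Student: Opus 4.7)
The plan is to take $f_1 = U^*(\mu)^m f_0$ itself. The first thing to verify is that $f_1$ still lies in $\Hcal_0$. This uses that $Z$ is central in $N$ and $\Gamma$-invariant: for any $g \in \Aff(X)$ and $z \in Z$ we have $zg = g\,\theta(g)^{-1}(z)$ with $\theta(g)^{-1}(z) \in Z$, so for $f \in \Hcal_0$,
\[U(z)U(g)f = U(g)U(\theta(g)^{-1}(z))f = U(g)f.\]
Hence $\Hcal_0$ is $U(g)$-invariant for every $g \in \Aff(X)$, and the same holds for $U^*(g)$ and for the convex combination $U^*(\mu)^m$. Since $U^*(g)$ also preserves $\Ccal^{0,\alpha}(X)$, we get $f_1 \in \Ccal^{0,\alpha}(X) \cap \Hcal_0$.

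Next, by the $\mu$-invariance of $\mes_X$ and the definition of convolution,
\[\int_X f_1 \dd \eta - \int_X f_1 \dd \mes_X = \int_X f_0 \dd \mu^{*m}*\eta - \int_X f_0 \dd \mes_X,\]
whose absolute value is bounded below by $t \norm{f_0}_{0,\alpha}$ by hypothesis. It remains to control $\norm{f_1}_{0,\alpha}$ in terms of $\norm{f_0}_{0,\alpha}$. Using the pointwise bound $\norm{U^*(g)f_0}_{0,\alpha} \leq \Lip_X(g)^\alpha \norm{f_0}_{0,\alpha}$, submultiplicativity of $\Lip_X$, and Jensen's inequality (valid since $\alpha \leq \beta$), I would estimate
\[\norm{f_1}_{0,\alpha} \leq \Bigl(\int_{\Aff(X)} \Lip_X(g)^\alpha \dd \mu(g)\Bigr)^m \norm{f_0}_{0,\alpha} \leq C_\beta^{m\alpha/\beta}\,\norm{f_0}_{0,\alpha}.\]
Combining the two inequalities and choosing $C$ large enough (depending on $\alpha$, $\beta$ and $\log C_\beta$), we get, using $t^C \leq t$ since $t < 1/2$,
\[\abse{\int_X f_1 \dd \eta - \int_X f_1 \dd \mes_X} \geq t\,C_\beta^{-m\alpha/\beta}\,\norm{f_1}_{0,\alpha} \geq e^{-Cm}\,t^C\,\norm{f_1}_{0,\alpha},\]
as required.

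No step in this argument looks delicate; the lemma essentially records that pulling a witness back by $m$ steps of the random walk costs at most a factor $C_\beta^{m\alpha/\beta}$ in Hölder norm, and costs \emph{nothing} in terms of staying inside $\Hcal_0$. The only conceptual point to check carefully is the $\Aff(X)$\dash{}invariance of $\Hcal_0$, which is exactly what the centrality and $\Gamma$-invariance of $Z$ deliver.
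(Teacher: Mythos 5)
Your proof is correct, but it takes a different route from the paper. The paper does not average: it first discards, via Markov's inequality, the set $E$ of $g$ with $\Lip_X(g)^\beta > 4C_\beta^m t^{-1}$ (which has $\mu^{*m}$-mass at most $t/4$), and then finds by pigeonhole a \emph{single} $g \notin E$ such that $f_1 = U^*(g)f_0$ already witnesses $\abse{\int_X f_1 \dd(\eta-\mes_X)} \geq t/2$ with $\norm{f_1}_{0,\alpha} \leq 4C_\beta^m t^{-1}$. You instead take the averaged function $f_1 = U^*(\mu)^m f_0$, which turns the lower bound into an exact identity $\int_X f_1 \dd\eta - \int_X f_1 \dd\mes_X = \int_X f_0 \dd\mu^{*m}*\eta - \int_X f_0 \dd\mes_X$ (using $\Aff(X)$-invariance of $\mes_X$) and shifts all the work to the norm bound, handled by submultiplicativity of $\Lip_X$, independence, and Jensen, giving $\norm{f_1}_{0,\alpha} \leq C_\beta^{m\alpha/\beta}\norm{f_0}_{0,\alpha}$; this avoids both the trimming and the pigeonhole and yields a slightly cleaner bound ($e^{O(m)}$ with no $t^{-1}$ loss), while the paper's version has the mild aesthetic advantage that $f_1$ is a genuine pullback $U^*(g)f_0$ rather than an average. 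Both bounds are of the form $e^{-Cm}t^C$ and equally usable downstream in Proposition~\ref{pr:XtoY}. One small imprecision: you assert that $\Hcal_0$ is $U(g)$-invariant for \emph{every} $g \in \Aff(X)$, but $\theta(g)^{-1}(z) \in Z$ is only guaranteed when $\theta(g) \in \Gamma$, since $Z$ is assumed $\Gamma$-invariant, not $\Aut(X)$-invariant; this is harmless here because $\mu^{*m}$ is supported on elements whose automorphism part lies in $\Gamma$, but you should state the invariance only for those $g$.
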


\begin{proof}
Without loss of generality, assume $\norm{f_0}_{0,\alpha} = 1$.
By the moment assumption, there is $C_\beta \geq 1$ such that for any $m \in \N$, $\int \Lip_X(g)^\beta \dd \mu^{*m}(g) \leq C_\beta^m$.
Set 
\[E = \{\, g \in \Aff(X) \mid \Lip_X(g)^\beta > 4C_\beta^{m} t^{-1}\,\}\]
so that we have \[\mu^{*m}(E)\leq \frac{t}{4}\] by the Markov inequality . Thus for any $\alpha \in {(0,\beta]}$,  
\[\forall g \in \Aff(X) \setminus E,\quad \norm{U^*(g)f_0}_{0,\alpha} \leq \Lip_X(g)^{\alpha}\norm{f_0}_{0,\alpha} \leq  4C_\beta^{m} t^{-1}.\]
By the assumption on $f_0$, 
\begin{align*}
t &\leq \int_{\Aff(X)} \abse{\int_X U^*(g) f_0 \dd (\eta- \mes_X)} \dd \mu^{*m}(g)\\
&\leq 2 \mu^{*m}(E) + \int_{\Aff(X) \setminus E} \abse{\int_X U^*(g) f_0 \dd (\eta- \mes_X)} \dd \mu^{*m}(g).
\end{align*}
Hence
\[\int_{\Aff(X) \setminus E} \abse{\int_X U^*(g) f_0 \dd (\eta- \mes_X)} \dd \mu^{*m}(g) \geq \frac{t}{2}.\]
Hence there exists $g \in \Aff(X) \setminus E$ such that $f_1 = U^*(g) f_0$ satisfies
\[\abse{\int_X f_1 \dd (\eta- \mes_X)} \geq \frac{t}{2}.\]
Moreover, since $g \notin E$,
\[\norm{f_1}_{0,\alpha} \leq 4C_\beta^{m} t^{-1},\]
showing the required property for $f_1$.
\end{proof}

\begin{proof}[Proof of Proposition~\ref{pr:XtoY}]
Let $t \in {(0,1)}$ be such that there exists $f \in \Ccal^{0,\alpha}(X)$ such that 
\[\abse{\int_X f \dd\mu^{*m}*\eta - \int_X f \dd \mes_X} \geq t \norm{f}_{0,\alpha}.\]
By Lemma~\ref{lm:finda0} there is $a_0 \in \Z^d$ and $f_0 \in \Ccal^{0,\alpha}(X) \cap \Hcal_{a_0}$ such that
\[\abse{\int_X f_0 \dd\mu^{*m}*\eta - \int_X f_0 \dd \mes_X} \geq t^{O(1)} \norm{f_0}_{0,\alpha}.\]

Using either Lemma~\ref{lm:gobackm} in the case where $a=0$ or Lemma~\ref{lm:CSargument} otherwise (note that $a_0 \neq 0$ implies $\int_X f_0 \dd \mes_X= 0$), we obtain some $f_1 \in \Ccal^{0,\alpha}(X) \cap \Hcal_{0}$ such that
\begin{equation}
\abse{\int_X f_1 \dd\eta - \int_X f_1 \dd \mes_X} \geq e^{-O(m)} t^{O(1)} \norm{f_1}_{0,\alpha}.
\end{equation}
 
Letting $\phi \in \Ccal^{0,\alpha}(Y)$ be such that $f_1 = \phi \circ \pi$, we have $\int_X f_1 \dd \eta = \int_Y \phi \dd \pi_* \eta$,  $\int_X f_1 \dd \mes_X = \int_Y \phi \dd \mes_Y$ and $\norm{\phi}_{0,\alpha} \ll \norm{f_1}_{0,\alpha}$. The last implied constant depends only on the choice of Riemannian metrics on $X$ and on $Y$.
Therefore,
\[\abse{\int_Y \phi \dd\pi_* \eta - \int_Y \phi \dd \mes_Y} \gg e^{-O(m)} t^{O(1)} \norm{\phi}_{0,\alpha},\]
finishing the proof of the proposition.
\end{proof}

\section{Proof of the main theorems}
\label{sc:main proof}
We are ready to prove the main theorem of this paper.
\begin{proof}[Proof of Theorem~\ref{thm:main}]
We use the same notation $\mu$, $\beta$, $X$, $N$, $\Lambda$, $Z$, $Y$, $\Gamma$, $\theta$ in Proposition~\ref{pr:XtoY} as in Theorem~\ref{thm:main}. 

Assume that the $\mu$-induced walk on $Y$ satisfies $(C_Y,\lambda,\alpha)$-quantitative equidistribution for parameters $C_Y > 0$, $\lambda > 0$ and $0 < \alpha \leq \min(1,\beta)$. 
Let $\lambda' \in {(0,\lambda)}$. 
We want to show that the $\mu$-induced walk on $X$ satisfies $(C_X,\lambda',\alpha)$\dash{}quantitative equidistribution for a large constant $C_X$.
Assume that for some $t \in {(0,\frac{1}{2})}$, $m \geq C_X \log\frac{1}{t}$ it holds that $\Wcal_\alpha(\mu^{*m} * \delta_x,\mes_X) > t$. 

Denote by $\pi \colon X \to Y$ the natural projection.
Now we can apply Proposition~\ref{pr:XtoY}, whose constant we denote by $C_\pi$, on $\eta = \mu^{*(m - m')}*\delta_x$ with $m'$ random walk steps.
Choose $m'$ to be such that 
\[C_\pi \log\frac{1}{t} < m' <2 C_\pi \log\frac{1}{t}.\]
By the proposition,
\[
\Wcal_\alpha(\mu^{*(m-m')}*\delta_{\pi(x)},\mes_Y) =
\Wcal_\alpha(\pi_*\eta,\mes_Y) \ge e^{-C_\pi m'} > t^{2 C_\pi^2},
\]
If $C_X$ is large enough then we can guarantee that $m - m' \geq C_Y \log(t^{-2 C_\pi^2})$, so that the premise of the $(C_Y,\lambda,\alpha)$\dash{}quantitative equidistribution of the random walk induced on $Y$ applies. 

For simplicity, we will assume for the remainder of the proof that $\mu$ is supported on $\Aut(X)$, and leave the case that it is supported on $\Aff(X)$ to the reader. The two proofs are almost identical.

The quantitative equidistribution on $Y$ tells us that there exists $y'\in Y$ with  $d(\pi(x),y') < e^{-\lambda (m-m')}$ such that the projection of $\Gamma y'$ to the maximal torus factor $T_Y$ of $Y$ is contained in a proper closed $\Gamma$-invariant subgroup $L$ of $T_Y$ of height $\leq t^{-2 C_Y C_\pi^2}$. Note that if $C_X > \frac{2 C_\pi \lambda}{\lambda-\lambda'}$, then $e^{-\lambda (m-m')} < e^{-\lambda' m}$. By choosing $x'\in X$ to be the point closest to $x$ in $\pi^{-1}(y')$, we get $d(x,x')= d(\pi(x),y')<e^{-\lambda' m}$.

Let $T_X$ denote the maximal torus factor of $X$. Then $T_Y$ is a factor of $T_X$. Moreover the following the diagram of $\Gamma$-equivariant maps commutes.
\[
  \begin{tikzcd}
    X \arrow{r}{\pi} \arrow{d} & Y \arrow{d} \\
    T_X \arrow{r}{\pi'} & T_Y
  \end{tikzcd}
\]
Thus, the projection of $\Gamma x'$ to $T_X$ is contained in $\pi'^{-1}(L)$, which is a 
proper closed $\Gamma$-invariant subgroup of $T_X$ of height $\leq O(t^{-2 C_Y C_\pi^2})$ by the following observation. 
\begin{lemm}
Let $T'$ be a factor torus of a torus $T$ and let $\pi' \colon T \to T'$ be the factor map. There exists $C' > 1$ such that for if $L$ is a proper closed subgroup of $T'$ of height $\le h$, then $\pi'^{-1}(L)$ is a proper closed subgroup of $T$ of height $\leq C' h$.
\end{lemm}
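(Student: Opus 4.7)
The argument is essentially a one-line Pontryagin duality computation combined with a norm bound for integer linear maps: pulling back closed subgroups via $\pi'$ corresponds on characters to pushing forward duals along the transpose of $\pi'$, and an integer matrix sends short integer vectors to only boundedly longer ones. So the plan is to make this correspondence explicit and read off the height bound.

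First I would fix notation. Using the identifications $\widehat{T} \cong \Z^d$ and $\widehat{T'} \cong \Z^{d'}$ that are part of the setup (and relative to which heights are defined), the surjective continuous homomorphism $\pi' \colon T \to T'$ has Pontryagin dual $\widehat{\pi'} \colon \widehat{T'} \hookrightarrow \widehat{T}$, which is injective and is therefore represented by an integer matrix $M \in M_{d \times d'}(\Z)$, depending only on $T$, $T'$, $\pi'$ and the chosen identifications.

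Next I would verify the key equality $(\pi'^{-1}(L))^{*} = M(L^{*})$. A character $\chi_b$ of $T$ annihilates $\pi'^{-1}(L)$ if and only if it annihilates the subgroup $\ker \pi' \subset \pi'^{-1}(L)$, hence descends through $\pi'$ to a character $\chi_a$ of $T'$ with $b = Ma$, and moreover this descended character annihilates $L$, i.e.\ $a \in L^{*}$. Consequently, if $L^{*}$ is generated by integer vectors $a_1, \dots, a_k$ of norm $\leq h$, then $(\pi'^{-1}(L))^{*}$ is generated by the integer vectors $Ma_1, \dots, Ma_k$, each of norm $\leq \norm{M} h$, where $\norm{M}$ is the operator norm of $M$ with respect to the fixed Euclidean structures. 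Setting $C' = \max(\norm{M}, 1)$, which depends only on $T$, $T'$, and the chosen identifications (and not on $L$ or $h$), yields the required height bound.

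Finally, properness of $\pi'^{-1}(L)$ in $T$ is immediate: $L$ proper in $T'$ forces $L^{*} \neq 0$, and the injectivity of $M$ then guarantees $M(L^{*}) \neq 0$, so $\pi'^{-1}(L) \subsetneq T$. I do not anticipate any real obstacle; the only mildly delicate bookkeeping is keeping track of the identifications between $\widehat{T}, \widehat{T'}$ and $\Z^d, \Z^{d'}$, but these are fixed in advance in the paper's conventions.
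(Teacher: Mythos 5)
Your proposal is correct and coincides with the paper's own argument: the paper likewise maps a generating set of $L^{*}$ through the dual of $\pi'$ to obtain generators of $(\pi'^{-1}(L))^{*}$, with $C'$ the operator norm of that integer linear map. Your write-up merely makes the duality identification $(\pi'^{-1}(L))^{*}=\widehat{\pi'}(L^{*})$ and the properness check explicit, which is fine.
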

\begin{proof}
A generating set of the dual of $L$ can be mapped by the dual of $\pi'$ to a generating set of the dual of $\pi'^{-1}(L)$. The dual of $\pi'$ changes the norm of the vectors by at most a finite factor $C'$, the operator norm of this linear transformation.
\end{proof}

By taking a new $C_X$ that is large enough, this give us $(C_X,\lambda',\alpha)$\dash{}quantitative equidistribution of the random walk on $X$.
\end{proof}

Theorem~\ref{thm:tower} follows immediately.

\begin{proof}[Proof of Theorem~\ref{thm:tower}]
Remark that for each $k = 1,\dotsc, l - 1$, because $X \to X_k$ is a smooth map between compact Riemannian manifolds, the condition that $\mu$ has a finite exponential moment implies that the image measure in $\Aff(X_k)$ also has finite exponential moment.
It suffices then to use Theorem~\ref{thm:torus} for the random walk on the torus $X_{l-1}$ and apply Theorem~\ref{thm:main} repeatedly $l - 1$ times. 
\end{proof}

Corollary~\ref{cor:qualitative main} follows from the following lemma.

\begin{lemm}
Let $X$ be a nilmanifold and $\mu$ a probability measure on $\Aff(X)$.
Let $T$ denote the maximal torus factor of $X$.
Let $H$ denote the subgroup generated by $\Supp(\mu)$.
If the $\mu$-induced random walk on $X$ satisfies a $(C,\lambda,\alpha)$\dash{}quantitative equidistribution for some $C > 0$, $\lambda > 0$ and $\alpha \in (0,1]$ then for any $x \in X$
\begin{enumerate}
    \item either $\mu^{*m}*\delta_x$ converges to $\mes_X$ in the weak-$*$ topology, 
    \item or the projection of $Hx$ to $T$ is contained in a proper closed $H$-invariant subset.
\end{enumerate}
\end{lemm}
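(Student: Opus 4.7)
The plan is to show directly that if $\mu^{*m}*\delta_x$ fails to converge weakly to $\mes_X$, then option (ii) holds. Since $\Wcal_\alpha$ metrizes the weak-$*$ topology on probability measures on the compact metric space $X$, the failure of weak convergence yields some threshold $\epsilon \in (0,1/2)$ and an infinite sequence of times $m_1 < m_2 < \dots$ with $\Wcal_\alpha(\mu^{*m_k}*\delta_x,\mes_X) > \epsilon$ for all $k$. After dropping finitely many indices we ensure $m_k \geq C\log(1/\epsilon)$, so the hypothesis of $(C,\lambda,\alpha)$\dash{}quantitative equidistribution applies at each $m_k$ with $t=\epsilon$.

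For each $k$, this yields a point $x'_k \in X$ with $d(x,x'_k) \leq e^{-\lambda m_k}$, a closed subgroup $H'_k \subset \Aff(X)$ with $d(g,H'_k) \leq e^{-\lambda m_k}$ for every $g \in \Supp(\mu)$, and a proper closed $\theta(H'_k)$\dash{}invariant subgroup $L_k \subset T$ of height $\leq \epsilon^{-C}$ with $\pi(H'_k x'_k) - \pi(x'_k) \subset L_k$. The crucial point is that the bound $\epsilon^{-C}$ on the height is uniform in $k$. The set of closed subgroups of $T$ of height $\leq \epsilon^{-C}$ is finite (each is determined by its dual, a subgroup of $\Z^d$ generated by integer vectors in the finite set $\{a \in \Z^d : \norm{a} \leq \epsilon^{-C}\}$), so passing to a subsequence I may take $L_k = L$ independent of $k$. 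Using discreteness of $\Aut(X)$, for $k$ large every element of $H'_k$ close to some $g \in \Supp(\mu)$ has the same image under $\theta$ as $g$; as remarked after the affine definition, this gives $\theta(H'_k)=\theta(H)$, so $L$ is $\theta(H)$\dash{}invariant.

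The passage to the limit is then a continuity argument. For any $h = g_1 \cdots g_n \in H$ with each $g_i \in \Supp(\mu)\cup \Supp(\mu)^{-1}$, I pick $\tilde{g}_{i,k} \in H'_k$ with $\tilde{g}_{i,k} \to g_i$ in $\Aff(X)$, and set $\tilde{h}_k = \tilde{g}_{1,k}\cdots\tilde{g}_{n,k} \in H'_k$. Then $\tilde{h}_k x'_k \to hx$ while $\pi(x'_k) \to \pi(x)$, and $\pi(\tilde{h}_k x'_k) - \pi(x'_k) \in L$ for every $k$; closedness of $L$ gives $\pi(hx) - \pi(x) \in L$. Hence $\pi(Hx) \subset \pi(x) + L$. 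This coset is a proper closed subset of $T$, and it is $H$\dash{}invariant, since for $h \in H$ and $l \in L$ the affine action satisfies $h\cdot(\pi(x)+l) = \pi(hx) + \theta(h) l$, with $\pi(hx) - \pi(x) \in L$ as shown and $\theta(h)l \in L$ by $\theta(H)$\dash{}invariance of $L$.

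The main obstacle, and the step I would think about most carefully, is aligning the data across different times $k$: the extraction of a single limit subgroup $L$ depends on finiteness of bounded-height subgroups, and the identification $\theta(H'_k) = \theta(H)$ relies on $\Aut(X)$ being discrete so that $e^{-\lambda m_k}$\dash{}perturbations only affect the translation part. Once these are in place, the remaining work is compactness and continuity, plus the routine algebraic check that a coset of a $\Gamma$\dash{}invariant subgroup through a point whose $H$\dash{}orbit already lies in the coset is automatically $H$\dash{}invariant under the affine action.
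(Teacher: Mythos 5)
Your proposal is correct and follows essentially the same route as the paper: extract a uniform threshold $t$ along an unbounded sequence of times via density of H\"older functions, apply the $(C,\lambda,\alpha)$\dash{}quantitative equidistribution at each time, use finiteness of closed subgroups of $T$ of bounded height to fix a single $L$, and pass to the limit to conclude $\pi(Hx)\subset \pi(x)+L$ with $\pi(x)+L$ a proper closed $H$\dash{}invariant subset. The only cosmetic difference is that the paper takes limits only for the generators $g\in\Supp(\mu)$ and deduces invariance algebraically, whereas you approximate arbitrary words in $H$; both are fine.
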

\begin{proof}
Let $\pi \colon X \to T$ be the projection from $X$ to its maximal torus factor.
Assume that  $\mu^{*m}*\delta_x$ does not converge to $\mes_X$ in the weak-$*$ topology.
The space of $\alpha$-Hölder functions $C^{0,\alpha}(X)$ is dense in the space of continuous functions.
It follows that there is $t > 0$ such that 
\[ \Wcal_\alpha(\mu^{*m}*\delta_x, \mes_X) > t\]
for an unbounded sequence of $m$.

From the quantitative equidistribution, we get
\begin{enumerate}
\item a sequence $(x_k)$ of points in $X$,
\item a sequence $(H_k)$ of subgroups of $\Aff(X)$,
\item a sequence $(L_k)$ of proper closed subgroup of $T$ of height $\leq t^{-C}$ and invariant under $\theta(H_k) = \theta(H)$
\end{enumerate}
such that 
\begin{enumerate}
\item $\lim_{k \to +\infty} x_k  = x$,
\item $\lim_{k \to +\infty} \sup_{g \in \Supp(\mu)} d(g, H_k) = 0$, and
\item $\pi(H_k x_k) - \pi(x_k) \subset L_k$ for all $k$.
\end{enumerate}
In $T$, there are only finitely many closed subgroup of height $\leq t^{-C}$.
Therefore, after extracting a subsequence, we may assume that $L_k =: L$ are all equal.
Letting $k$ go to $+\infty$, we find
\[\forall g \in \Supp(\mu),\quad  \pi(gx) - \pi(x) \in L.\]
This is enough to conclude that $\pi(x) + L \subset T$ is $H$-invariant and $\pi(Hx) \subset \pi(x) + L$.
\end{proof}


\appendix
\section{A large deviation estimate.}
\label{sc:large dev}
Let $\mu$ be Borel probability measure on $\GL_d(\R)$, $d \geq 2$. 
Consider the random walk in the linear group defined by $\mu$.
Recall that $\mu$ is said to have a finite exponential moment if there is $\beta > 0$ such that
\begin{equation}
\label{eq:expmomentGLd}
\int_{\GL_d(\R)} \max\bigl\{\norm{g}, \norm{g^{-1}}\bigr\}^\beta \dd \mu(g) < +\infty.    
\end{equation}

Recall also that the top Lyapunov exponent of $\mu$ is defined by
\[\lambda_1(\mu) = \lim_{m\to +\infty}\frac{1}{m}\int_{\GL_d(\R)} \log \norm{g} \dd \mu^{*m}(g)\]
\begin{thm}
\label{thm:LD}
Let $\mu$ be a Borel probability measure on $\GL_d(\R)$. Assume $\mu$ has a finite exponential moment. For any $\omega > 0$ there is $\kappa > 0$ such that for all $m$ large enough.
\[\mu^{*m}\{\, g\in \GL_d(\R) \mid \log \norm{g} > m (\lambda_1(\mu) + \omega) \,\} \leq e^{-\kappa m}.\]
\end{thm}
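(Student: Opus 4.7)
The plan is to establish this by a standard exponential moment (Chernoff-type) argument, combined with submultiplicativity of the operator norm and the subadditive description of $\lambda_1(\mu)$.

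First I would introduce the moment generating function of the norm: for $\beta \geq 0$, set
\[\Phi_n(\beta) = \int_{\GL_d(\R)} \norm{g}^\beta \dd \mu^{*n}(g).\]
By the finite exponential moment hypothesis, there is some $\beta_0 > 0$ with $\Phi_1(\beta_0) < \infty$, and by submultiplicativity of the norm together with the fact that under $\mu^{*(n+m)}$ we factor $g = g_1 g_2$ with $g_1,g_2$ independent of laws $\mu^{*n}$, $\mu^{*m}$, we obtain $\Phi_{n+m}(\beta) \leq \Phi_n(\beta)\Phi_m(\beta)$ for all $\beta \in [0,\beta_0]$. Hence $\log \Phi_n(\beta)$ is subadditive in $n$, so by Fekete's lemma
\[p(\beta) := \lim_{n\to+\infty} \frac{1}{n}\log \Phi_n(\beta) = \inf_n \frac{1}{n}\log \Phi_n(\beta)\]
exists and is finite on $[0,\beta_0]$.

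Now Markov's inequality yields, for any $\beta \in (0,\beta_0]$,
\[\mu^{*n}\{\,g : \log\norm{g} > n(\lambda_1(\mu)+\omega)\,\} \leq e^{-\beta n(\lambda_1(\mu)+\omega)}\Phi_n(\beta),\]
so the theorem will follow once we find $\beta \in (0,\beta_0]$ with $p(\beta) < \beta(\lambda_1(\mu)+\omega)$; any $\kappa$ strictly less than the gap will then work for all $n$ sufficiently large. Thus the whole problem reduces to showing $\limsup_{\beta \to 0^+} p(\beta)/\beta \leq \lambda_1(\mu)$.

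To this end, the same subadditivity argument applied to the (finite) integrals $\int \log\norm{g} \dd \mu^{*n}(g)$ shows that $\tfrac{1}{n}\int \log\norm{g}\dd \mu^{*n}(g)$ converges to $\lambda_1(\mu)$, so I can fix $n_0$ with $\tfrac{1}{n_0}\int \log\norm{g}\dd \mu^{*n_0}(g) < \lambda_1(\mu) + \omega/4$. I would then expand $\Phi_{n_0}$ around $\beta = 0$ via the pointwise bound $e^{\beta t} \leq 1 + \beta t + \tfrac{1}{2}\beta^2 t^2 e^{\beta\abs{t}}$ applied to $t = \log\norm{g}$, giving for $\beta \in (0, \beta_0/2]$
\[\Phi_{n_0}(\beta) \leq 1 + \beta \int \log\norm{g}\dd \mu^{*n_0}(g) + \tfrac{1}{2}\beta^2 \int (\log\norm{g})^2 \norm{g}^{\beta_0/2}\dd \mu^{*n_0}(g).\]
The last integral is finite because $(\log\norm{g})^2 \norm{g}^{\beta_0/2} \ll \norm{g}^{\beta_0}$ and $\Phi_{n_0}(\beta_0) \leq \Phi_1(\beta_0)^{n_0} < \infty$, so it is bounded by some constant $M_{n_0}$. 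Taking logarithms and dividing by $n_0$,
\[p(\beta) \leq \frac{1}{n_0}\log \Phi_{n_0}(\beta) \leq \beta\Bigl(\lambda_1(\mu) + \frac{\omega}{4}\Bigr) + \frac{M_{n_0}}{2 n_0}\beta^2,\]
which is strictly less than $\beta(\lambda_1(\mu)+\omega/2)$ as soon as $\beta$ is chosen small enough relative to $M_{n_0}$ and $\omega$. This gives the required $\beta$ and hence $\kappa := \beta\omega/2 > 0$.

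The main technical point, and the only place the hypothesis is used in an essential way, is controlling the quadratic remainder $\int (\log\norm{g})^2 \norm{g}^{\beta}\dd \mu^{*n_0}(g)$ uniformly for small $\beta$; the finite exponential moment absorbs both the logarithmic factor and the $\norm{g}^{\beta}$ factor into a fixed $\beta_0$-exponential moment that is finite on $\mu^{*n_0}$ for each fixed $n_0$. Everything else — the Markov step, the subadditive description of $p(\beta)$ and of $\lambda_1(\mu)$ — is formal.
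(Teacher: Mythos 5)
Your argument is correct in outline and takes a genuinely different route from the paper. The paper's proof blocks the walk into segments of a fixed length $l$ chosen (straight from the definition of $\lambda_1(\mu)$ as a limit) so that $\Ebb\bigl[\log\norm{g_1\dotsm g_l}\bigr]<l(\lambda_1(\mu)+\omega/3)$, applies Cram\'er's theorem to the i.i.d.\ block variables $\log\norm{g_{kl-l+1}\dotsm g_{kl}}$, uses submultiplicativity, and disposes of the at most $l-1$ leftover factors by a Markov bound on a single $\norm{g_i}$. You instead introduce the subadditive pressure $p(\beta)=\lim_n\frac{1}{n}\log\int\norm{g}^\beta\dd\mu^{*n}(g)$ and prove the Chernoff bound directly, reducing everything to $p(\beta)<\beta(\lambda_1(\mu)+\omega)$ for some small $\beta>0$, which you extract from a second-order expansion of $\Phi_{n_0}(\beta)$ at a fixed time $n_0$ with $\frac{1}{n_0}\int\log\norm{g}\dd\mu^{*n_0}(g)<\lambda_1(\mu)+\omega/4$. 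Your route is more self-contained (no appeal to Cram\'er) and avoids the leftover-block bookkeeping; the paper's is shorter because it quotes a classical theorem. Both use the subadditive description of $\lambda_1(\mu)$ in exactly the same way.

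One step needs a small repair. The pointwise bounds $e^{\beta\abs{\log\norm{g}}}\le\norm{g}^{\beta_0/2}$ and $(\log\norm{g})^2\norm{g}^{\beta_0/2}\ll\norm{g}^{\beta_0}$ fail on the set $\{\norm{g}<1\}$, which is nonempty for a general $\mu$ on $\GL_d(\R)$ (e.g.\ $g=\frac{1}{2}I$): there $e^{\beta\abs{\log\norm{g}}}=\norm{g}^{-\beta}>1>\norm{g}^{\beta_0/2}$, and the factor $(\log\norm{g})^2$ cannot be absorbed into a positive power of $\norm{g}$. The fix is immediate using the paper's (two-sided) definition of finite exponential moment, $\int\max\{\norm{g},\norm{g^{-1}}\}^{\beta_0}\dd\mu(g)<+\infty$: since $\norm{g}\,\norm{g^{-1}}\ge 1$, one has $\abs{\log\norm{g}}\le\log\max\{\norm{g},\norm{g^{-1}}\}$, hence $e^{\beta\abs{\log\norm{g}}}\le\max\{\norm{g},\norm{g^{-1}}\}^{\beta_0/2}$ for $\beta\le\beta_0/2$, and the quadratic remainder is $\ll\int\max\{\norm{g},\norm{g^{-1}}\}^{\beta_0}\dd\mu^{*n_0}(g)\le\bigl(\int\max\{\norm{g},\norm{g^{-1}}\}^{\beta_0}\dd\mu(g)\bigr)^{n_0}<+\infty$ by submultiplicativity of $g\mapsto\max\{\norm{g},\norm{g^{-1}}\}$ under products (the same remark makes the first-order term $\int\log\norm{g}\dd\mu^{*n_0}(g)$ finite). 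With that change your proof is complete.
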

\begin{proof}
Let $g_1, g_2, \dotsc $ be independent random variables distributed according to $\mu$.

Given $\omega > 0$, let $l \geq 1$ be such that
\[\Ebb\bigl[ \log\norm{g_1 \dotsm g_l}\bigr] < l(\lambda_1(\mu) + {\omega}/{3}).\]
Observe that $\bigl(\log \norm{g_{kl-l+1} \dotsm g_{kl}}\bigr)_{k \geq 1}$ is a sequence of i.i.d. real-valued random variables having a finite exponential moment.
Thus, by Cr\'amer's theorem, there exists $\tau > 0$ such that for $k$ large enough,
\[\Pbb\bigl[\log \norm{g_1\dotsm g_l} + \dotsb + \log\norm{g_{kl-l+1}\dotsm g_{kl}} > kl(\lambda_1(\mu) + {2\omega}/{3})\bigr] \leq e^{-\tau k}.\]
The norm is submultiplicative, hence for $k$ large enough,
\[\Pbb\bigl[\log \norm{g_1\dotsm g_{kl}}> kl(\lambda_1(\mu) + {2\omega}/{3})\bigr] \leq e^{-\tau k}.\]

For any $m$, write $m = kl +j$ with $0 \leq j < k$. 
Using submultiplicativity again, we see that if $\log \norm{g_1 \dotsm g_m} > m (\lambda_1(\mu) + \omega)$ then either $\log \norm{g_1\dotsm g_{kl}} > kl(\lambda_1(\mu) + {2\omega}/{3})$ or there is $1\leq i \leq j$ such that $\log\norm{g_{kl+i}} > \frac{\omega}{3l}m$.
Thus,
\[\Pbb\bigl[\log \norm{g_1\dotsm g_{m}}> m(\lambda_1(\mu) + \omega)\bigr]  \leq e^{-\tau k} + l \Pbb\bigl[\log \norm{g_1} > \frac{\omega}{3l}m\bigr].\]
Finally, since $\mu$ has a finite exponential moment, there is some $\beta > 0$ such that $\Ebb\bigl[\norm{g_1}^\beta\bigr]$ is finite.
Hence by Markov's inequality,
\[\Pbb\bigl[\log \norm{g_1} > \frac{\omega}{3l}m\bigr] = \Pbb\bigl[\norm{g_1}^\beta > e^{\frac{\beta\omega}{3l}m}\bigr] \leq e^{-\frac{\beta \omega}{3l}m} \Ebb\bigl[\norm{g_1}^\beta\bigr].\]
Put together, we find 
\[\Pbb\bigl[\log \norm{g_1\dotsm g_{m}}> m(\lambda_1(\mu) + \omega)\bigr]  \leq e^{-\kappa m}\]
for $\kappa = \frac{1}{2}\min\{\frac{\tau}{l}, \frac{\beta \omega}{3l}\}$ and $m$ large enough.
\end{proof}

\section{The case of a torus}
\label{sc:Zconnected}
Here we explain how to remove the Zariski connectedness assumption in the main theorem of \cite{HS}.
Namely, the goal is the following.
\begin{thm}
\label{thm:aut,base}
Let $X = \R^d/\Z^d$.
Let $\mu$ be a probability measure on $\Aut(X) = \GL_d(\Z)$ having a finite exponential moment.
Let $\Gamma$ denote the subgroup generated by the support of $\mu$.
Assume that the action of $\Gamma$ on $\R^d$ is strongly irreducible. 
Then given any $\lambda \in (0,\lambda_{1,\R^d}(\mu))$, there exists a constant $C = C(\mu,\lambda) \geq 1$ such that the following holds. 
If $x \in X$ satisfies
\[\abs{\widehat{\mu^{*n} *\delta_x}(a)} > t \quad \text{and} \quad n \geq C \log\frac{\norm{a}}{t}\]
for some $a \in \Z^d\setminus\{0\}$ and $t \in {(0,\frac{1}{2})}$, then there exists a rational point $x' \in X$ of denominator at most $(\frac{\norm{a}}{t})^{C}$ such that $d(x,x') \leq e^{-\lambda m}$.
\end{thm}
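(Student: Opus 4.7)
The plan is to reduce Theorem~\ref{thm:aut,base} to the main theorem of \cite{HS}, which establishes the same conclusion under the additional hypothesis that the Zariski closure of $\Gamma$ is Zariski-connected. Let $G$ denote the Zariski closure of $\Gamma$ in $\GL_d(\R)$, let $G^0$ be its identity component, and set $H := \Gamma \cap G^0$, a normal finite-index subgroup of $\Gamma$ of index $r = [G:G^0]$. Strong irreducibility of $\Gamma$ passes to $H$ (any finite-index subgroup of $H$ is finite-index in $\Gamma$), and $H$ is Zariski-dense in the connected group $G^0$ (the Zariski closure of each $\Gamma$-coset of $H$ equals the corresponding $G$-coset of $G^0$), so the hypotheses of \cite{HS} become available as soon as a suitable probability measure on $H$ is in hand.

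I construct such a measure $\mu_H$ via first returns: setting $\tau := \inf\{n \geq 1 : g_n \cdots g_1 \in H\}$ for $g_i \sim \mu$ i.i.d., let $\mu_H$ be the law of $g_\tau \cdots g_1 \in H$. The induced Markov chain on the finite group $\Gamma/H$ is irreducible (since $\Supp(\mu)$ generates $\Gamma$), so $\tau$ has exponential tails; combined with the exponential moment of $\mu$, this gives $\mu_H$ a finite exponential moment (for a possibly smaller exponent). A standard argument shows $\langle\Supp(\mu_H)\rangle$ is finite-index in $H$, so its Zariski closure is a finite-index algebraic subgroup of the connected $G^0$ and therefore equals $G^0$. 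Strong irreducibility is inherited, and Wald's identity gives $\lambda_{1,\R^d}(\mu_H) = \Ebb[\tau] \cdot \lambda_{1,\R^d}(\mu)$. Applying \cite{HS} to $\mu_H$ yields: for any $\lambda_* \in (0, \lambda_{1,\R^d}(\mu_H))$ there is $C_* = C_*(\mu_H, \lambda_*)$ such that if $m \geq C_*\log(\norm{b}/s)$ and $|\widehat{\mu_H^{*m}*\delta_y}(b)| > s$, then there is a rational $y' \in X$ of denominator $\leq (\norm{b}/s)^{C_*}$ with $d(y, y') \leq e^{-\lambda_* m}$.

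To transfer the conclusion back to $\mu$, I decompose the $\mu$-walk by the return times. Let $T_0 < T_1 < \dots$ be the successive return times to $H$ and $M_n = \max\{k : T_k \leq n\}$, so that $g_n\cdots g_1 = \tilde g \cdot h$ with $h = g_{T_{M_n}}\cdots g_1$ the completed-excursion product and $\tilde g = g_n \cdots g_{T_{M_n}+1}$ the incomplete tail. Cramér's theorem applied to the i.i.d.\ excursion lengths, together with the exponential moment of $\mu$, gives that the events $\{M_n < (1-\epsilon)n/\Ebb[\tau]\}$, $\{n - T_{M_n} > C'\log(\norm{a}/t)\}$, and $\{\norm{\tilde g} > (\norm{a}/t)^{O(1)}\}$ have total probability $\ll t$, provided $n \geq C\log(\norm{a}/t)$ with $C$ large. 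On the complement, a Fubini-type rearrangement of the renewal expansion of $\widehat{\mu^{*n}*\delta_x}(a)$ produces a mixture over $(m, \tilde g)$ whose Fourier contributions factor as $\widehat{\mu_H^{*m}*\delta_x}(\tilde g^\top a)$; by pigeonhole, the hypothesis $|\widehat{\mu^{*n}*\delta_x}(a)| > t$ forces some $(m, \tilde g)$ with $m \geq (1-\epsilon)n/\Ebb[\tau]$, $\norm{\tilde g^\top a} \leq (\norm{a}/t)^{O(1)}$, and $|\widehat{\mu_H^{*m}*\delta_x}(\tilde g^\top a)| > t/O(1)$. Invoking \cite{HS} on $\mu_H$ then produces a rational $x'$ with denominator $\leq (\norm{a}/t)^{O(C_*)}$ and $d(x,x') \leq e^{-\lambda_* m}$; sending $\lambda_* \to \lambda_{1,\R^d}(\mu_H)$ and $\epsilon \to 0$ covers any $\lambda \in (0, \lambda_{1,\R^d}(\mu))$.

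The main obstacle is the conditional-independence bookkeeping in the transfer: the completed-excursion products $(h_1, \dots, h_{M_n})$ are not exactly i.i.d.\ $\mu_H$ once one conditions on $M_n$ and the tail, due to the constraint $\sum_{k\leq M_n}\tau_k \leq n < \sum_{k\leq M_n+1}\tau_k$. The cleanest route is a careful Fubini rearrangement that integrates out the excursion lengths, recovering $\mu_H$-iterates with the correct weights. An alternative construction, using $\mu^{*p}|_H/\mu^{*p}(H)$ (with $p$ the period of the $\Gamma/H$-chain) as $\mu_H$, avoids the random stopping time at the cost of a combinatorial $2^m$-term expansion of $(\mu^{*p})^{*m}$; either way the residual quantitative estimates — polynomial-in-$\norm{a}/t$ growth of $\norm{\tilde g}$, large-deviation control on $M_n$, and the conversion of the denominator bound $(\norm{b}/s)^{C_*}$ into one polynomial in $\norm{a}/t$ — are routine given the exponential-moment inputs.
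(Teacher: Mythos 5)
Your construction of the return-time measure $\mu_H=\mu^\circ$ and its properties (exponential moment, Lyapunov exponent $\Ebb[\tau]\lambda_1$, large deviations for the return times, Zariski density in $G^\circ$) matches the paper's starting point, and the quantitative bookkeeping at the end is consistent. But the step you yourself flag as "the main obstacle" and then dismiss as routine Fubini bookkeeping is in fact the heart of the matter, and as stated it fails. To know the length of the incomplete tail $\tilde g$ (and hence its contribution $\tilde g^{\top}a$ to the frequency) you must condition on the total length of the completed excursions, i.e.\ on $T_{M_n}$; given that conditioning, the product of the completed excursions is \emph{not} distributed as $\mu_H^{*m}$ but as a product of independent, \emph{non-identically distributed} blocks --- the laws $\nu_l$ of an excursion conditioned on its length $l$. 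No rearrangement "integrating out the excursion lengths" recovers genuine $\mu_H$-iterates, because the sum of the lengths is exactly what you conditioned on. Pigeonhole then only tells you that some \emph{length-conditioned} Fourier coefficient is large, and the black-box theorem of \cite{HS} does not apply to such conditioned (non-i.i.d.-product) measures; the implication from "conditioned coefficient large" back to "$|\widehat{\mu_H^{*m}*\delta_x}(b)|$ large" goes the wrong way. The same defect appears in your alternative via $\mu^{*p}|_H$: the $2^m$-term expansion contains mixed words whose factors are different measures, which is again not covered by \cite{HS} as a black box.

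This is precisely why the paper does not transfer a conclusion from an auxiliary connected-case walk, but instead redoes the Fourier-decay input. It first extends the multiplicative-convolution Fourier decay of \cite[Theorem 2.1]{HS} to convolutions of \emph{different} measures (Theorem~\ref{thm:fourier}), using the Bourgain--Dyatlov polynomial trick; it then conditions $\mu^{*n}$ on the exact return-time pattern as in \eqref{eq:RWcondRT}, checks the non-concentration hypotheses for the conditioned block measures $\nu_l$ (via Lemma~\ref{lm:muppD} and Lemma~\ref{lm:eta'NC}), and deduces Fourier decay of $\mu^{*n}$ itself on the cosets $\gamma_jE$ of the subalgebra generated by $G^\circ(\R)$ (Theorem~\ref{thm:decaymun}); only then does it re-run the endgame of \cite{HS} for $\mu$ directly. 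So your proposal is missing a genuine new ingredient --- a Fourier decay (or equivalent non-concentration-to-decay) statement valid for products of distinct conditioned measures --- and without it the reduction to the Zariski-connected case does not close.
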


The corresponding statement for affine random walks is the following. 
Recall that $\theta \colon \Aff(X) \to \Aut(X)$ denote the linear part.
\begin{thm}
\label{thm:aff,base}
Let $X = \R^d/\Z^d$.
Let $\mu$ be a finitely supported probability measure on $\Aff(X) = \GL_d(\Z) \ltimes \R^d$.
Let $\Gamma$ denote the subgroup generated by the support of $\theta_*\mu$.
Assume that the action of $\Gamma$ on $\R^d$ is strongly irreducible. 
Then given any $\lambda \in (0,\lambda_{1,\R^d}(\mu))$, there exists a constant $C = C(\theta_*\mu,\lambda) \geq 1$ such that the following holds. 
If $x \in X$ satisfies
\[\abs{\widehat{\mu^{*n} *\delta_x}(a)} > t \quad \text{and} \quad n \geq C \log\frac{\norm{a}}{t}\]
for some $a \in \Z^d\setminus\{0\}$ and $t \in {(0,\frac{1}{2})}$, then there exists a point $x' \in X$ and a finite set $F \subset \Aff(X)$ such that $d(x,x') \leq e^{-\lambda m}$, $d_{\mathrm{H}}(\Supp(\mu),F) \leq e^{-\lambda m}$ and moreover, denoting by $H$ the subgroup generated by $F$, the orbit $Hx'$ is finite of cardinality at most $(\frac{\norm{a}}{t})^{C}$.
\end{thm}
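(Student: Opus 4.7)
The strategy is to reduce Theorem~\ref{thm:aff,base} to its analogue under the extra assumption that the Zariski closure of $\Gamma$ is connected, namely \cite[Theorem~1.3]{HLL} (recorded as Theorem~\ref{thm:affine torus} in the main text). Let $G \subset \GL_d(\R)$ denote the Zariski closure of $\Gamma$ and $G^\circ$ its identity component, and set $\Gamma_0 := \Gamma \cap G^\circ$. Then $\Gamma_0$ is normal of finite index in $\Gamma$, its Zariski closure equals the connected group $G^\circ$, and it still acts strongly irreducibly on $\R^d$ because strong irreducibility is inherited by finite-index subgroups. Hence Theorem~\ref{thm:affine torus} applies to any finitely supported probability measure on $\Aff(X)$ whose linear part generates a finite-index subgroup of $\Gamma_0$. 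To produce such a measure from $\mu$, let $F := \Gamma/\Gamma_0$, choose $N \in \N$ (a multiple of the exponent of $F$) large enough that $c := \mu^{*N}(\theta^{-1}(\Gamma_0)) > 0$, and define
\[\mu_0 := c^{-1}\,\mu^{*N}|_{\theta^{-1}(\Gamma_0)}.\]
A standard random-walks-on-finite-groups argument (after symmetrising $\mu \mapsto \mu*\check\mu$ if necessary) shows that $\langle \Supp(\theta_*\mu_0)\rangle$ has finite index in $\Gamma_0$, so Theorem~\ref{thm:affine torus} is applicable to $\mu_0$.

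The technical heart is to transfer the hypothesis $\abs{\widehat{\mu^{*n}*\delta_x}(a)} > t$ to an analogous bound $\abs{\widehat{\mu_0^{*m}*\delta_y}(a)} > t^{O(1)}$ for some $m \asymp n/N$ and some $y \in X$ close to a $\mu$-trajectory image of $x$. Writing $\mu^{*N} = c\mu_0 + (1-c)\mu_1$ (with $\mu_1$ the renormalised complementary part) and $n = n_0 N + r$ with $0 \leq r < N$, expand $(\mu^{*N})^{*n_0}$ as a convex combination indexed by $\epsilon \in \{0,1\}^{n_0}$. Bernoulli concentration localises the mass on sequences with $\#\{i : \epsilon_i = 1\} \geq cn_0/2$; a pigeonhole on these $\epsilon$ combined with the triangle inequality on Fourier coefficients then produces a single $\epsilon$ whose iterated convolution has Fourier coefficient at $a$ of magnitude at least $t\cdot e^{-O(n_0)}$. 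Using the normality of $\Gamma_0$ in $\Gamma$, the $\mu_1$-factors between consecutive $\mu_0$-factors can be absorbed by conjugation into adjacent $\mu_0$-factors and into the base-point, yielding the desired Fourier bound for $\mu_0^{*m}*\delta_y$.

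Applying Theorem~\ref{thm:affine torus} to $\mu_0$ at $(y,a,t^{O(1)},m)$ yields $y' \in X$ with $d(y,y') \leq e^{-\lambda' m}$ and a finite set $F_0 \subset \Aff(X)$ Hausdorff-close to $\Supp(\mu_0)$ generating a group $H_0$ with $\#(H_0 y') \leq (\norm{a}/t)^{O(1)}$. Since $\Supp(\mu_0) \subset \Supp(\mu)^N$ and perturbations keep the (discrete) automorphism parts fixed while moving only the (abelian) translation parts, the perturbation $F_0$ of $\Supp(\mu_0)$ can be realised by perturbing the translation parts of individual elements of $\Supp(\mu)$ to give the required $F$ near $\Supp(\mu)$; backtracking the trajectory from $y$ to $x$ via these perturbed elements produces $x' \in X$ near $x$ with $\langle F \rangle x'$ of controlled size. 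The main obstacle is the Fourier transfer step of the second paragraph, where one must carefully coordinate the Bernoulli concentration, the pigeonhole/triangle-inequality extraction, and the normality-based absorption of $\mu_1$-factors, while tracking the polynomial degradation of $t$ and ensuring $\lambda'\cdot m$ matches $\lambda\cdot n$ (which reduces to the Lyapunov-exponent scaling $\lambda_{1,\R^d}(\mu_0)\asymp N\lambda_{1,\R^d}(\mu)$).
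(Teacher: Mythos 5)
Your reduction-to-the-connected-case strategy has two genuine gaps, each fatal as written. First, the claim that $\langle \Supp(\theta_*\mu_0)\rangle$ has finite index in $\Gamma_0$ is unjustified and is false at this level of generality: the subgroup generated by the words of a fixed length $N$ that happen to land in $\Gamma_0$ can have infinite index (e.g.\ for a free group $\langle a,b\rangle$ mapping onto $\Z/2\Z$ by $a\mapsto 1$, $b\mapsto 0$, and $N=2$, one gets $\langle a^2,b^2\rangle$, of infinite index), and the parenthetical ``symmetrise $\mu$'' is not available, since both the hypothesis $\abs{\widehat{\mu^{*n}*\delta_x}(a)}>t$ and the conclusion concern $\mu$ itself. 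Without finite index (or at least Zariski density of $\langle\Supp(\theta_*\mu_0)\rangle$ in $G^\circ$ plus strong irreducibility, neither of which you prove), Theorem~\ref{thm:affine torus}/\cite[Theorem 1.3]{HLL} cannot be invoked for $\mu_0$. Second, and more seriously, the Fourier transfer step does not produce an object to which that theorem applies. After pigeonholing you hold an interlaced convolution $\mu_{\epsilon_{n_0}}*\dotsm*\mu_{\epsilon_1}*\mu^{*r}*\delta_x$; pushing the $\mu_1$-blocks to one side using normality of $\Gamma_0$ replaces each $\mu_0$-block by a conjugate $(\mathrm{conj}_{d_i})_*\mu_0$ with conjugators $d_i$ that are unbounded, varying elements of $\Gamma\ltimes\R^d$ (words of length up to $n$), so the result is a product of independent but \emph{non-identically distributed} measures, not $\mu_0^{*m}*\delta_y$. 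The black-box theorem is stated for i.i.d.\ powers of one finitely supported measure, with constants depending on that measure, so it does not apply; and the alternative of restricting to patterns where all $\mu_0$-factors are consecutive costs a weight $e^{-O(n)}$, which (like the loss $t\,e^{-O(n_0)}$ you already concede) destroys the required relation between $m$ and $\log(1/t)$. The back end has the same quantitative problem: transporting $y'$ back to $x'$ through a word of length $\asymp n$ multiplies distances by $e^{O(n)}$, so you do not recover $d(x,x')\leq e^{-\lambda n}$ for every $\lambda<\lambda_{1,\R^d}(\mu)$, and a perturbation of $\Supp(\mu_0)\subset\Supp(\mu)^N$ need not be realizable by perturbing the translation parts of $\Supp(\mu)$ itself.

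These obstructions are precisely why the appendix does not use the connected-case theorems as black boxes for an auxiliary measure. Instead, the only place where Zariski connectedness enters \cite{HS,HLL} is the Fourier decay of the linear random walk, and the paper re-proves that ingredient directly for $(\theta_*\mu)^{*n}$ restricted to the cosets $\gamma_j E$: it decomposes $\mu^{*n}$ according to first-return times to $G^\circ$ (the return measure $\mu^\circ$ does generate $\Gamma\cap G^\circ$, at the price of losing finite support and producing conditioned measures $\nu_l$ that are all different), and compensates by upgrading the multiplicative-convolution estimate to Theorem~\ref{thm:fourier}, valid for distinct measures, via the Bourgain--Dyatlov polarization trick. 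With Theorem~\ref{thm:decaymun} in hand, the remaining granulation and backtracking arguments of \cite{HS} and \cite{HLL} are quoted verbatim, since they never use connectedness. If you want to salvage a reduction along your lines, you would have to redo those internal arguments for non-identically distributed products anyway, which is essentially what the paper's route accomplishes.
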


In view of \cite[Lemma 4.5]{Boyer} or alternatively Lemma~\ref{lm:finda0}, Theorem~\ref{thm:torus} follows.

The key point is a Fourier decay estimate for $(\theta_*\mu)^{*n}$, stated as Theorem~\ref{thm:decaymun} below, replacing~\cite[Theorem 3.20]{HS}.
To establish this Fourier decay property, we first need a Fourier decay estimate for multiplicative convolutions of measures having nice non-concentration properties, Theorem~\ref{thm:fourier}.
Then in subsection~\ref{ss:returnTime}, using return times and the special case of Zariski-connected groups, we obtain a decomposition of $(\theta_*\mu)^{*n}$ as a sum of multiplicative convolutions of measures having the required non-concentration properties.
Once Theorem~\ref{thm:decaymun} established, the rest of the proof of Theorem~\ref{thm:aut,base} is identical to the corresponding part in~\cite{HS} and that of Theorem~\ref{thm:aff,base} to the corresponding part in~\cite{HLL}.

\subsection{Multiplicative convolutions in simple algebras}
First, we need a slight improvement of \cite[Theorem 2.1]{HS} by allowing the measures we convolve to be different.

Let $E$ be a normed simple algebra over $\R$ of finite dimension.
For $x \in E$, denote by $\det(x)$ the determinant of the linear endomorphism $E \to E$, $y \mapsto xy$.
For $\rho > 0$, write
\[S(\rho) = \{\, x \in E \mid \abs{\det(x)} \leq \rho \,\}.\]

\begin{defn}
Let $\epsilon > 0$, $\kappa > 0$, $\tau > 0$ be parameters.
We say a Borel measure $\eta$ on $E$ satisfies $\NC_0(\epsilon,\kappa,\tau)$ at scale $\delta > 0$ if
\begin{enumerate}
\item $\eta \bigl(E \setminus B(0,\delta^{-\epsilon})\bigr) \leq \delta^{\tau}$;
\item for every $x \in E$, $\eta(x + S(\delta^{\epsilon})) \leq \delta^{\tau}$;
\item for every $\rho \geq \delta$ and every proper affine subspace $W \subset E$, $\eta(W^{(\rho)}) \leq \delta^{-\epsilon} \rho^\kappa$.
\end{enumerate}
\end{defn}

Throughout this appendix, each occurrence of $((t))$ with $t > 0$ denotes an unspecified Borel measure of total mass at most $t$.
\begin{defn}
We say a Borel probability measure $\eta$ on $E$ satisfies $\NC(\epsilon,\kappa,\tau)$ at scale $\delta$ if it can be written as $\eta = \eta_0 + ((\delta^\tau))$ with $\eta_0$ satisfying $\NC_0(\epsilon,\kappa,\tau)$ at scale $\delta$.
\end{defn}

Here, $\NC$ stands for non-concentration. 

Let $E^*$ denote the linear dual of $E$ over $\R$. Recall that the Fourier transform of a finite Borel measure $\nu$ on $E$ is defined as
\[\forall \xi \in E^*,\quad \hat{\nu}(\xi) = \int_E e(\xi(x)) \dd \nu(x)\]
where $e(t) = e^{2\pi i t}$, for $t \in \R$.

\begin{thm}[Fourier decay of multiplicative convolutions in simple algebra]
\label{thm:fourier}
Let $E$ be a normed simple algebra over $\R$ of finite dimension. Given $\kappa > 0$, there exists $s = s(E,\kappa) \in \N$ and $\epsilon = \epsilon(E,\kappa) > 0$ such that for any parameter $\tau \in {(0, \epsilon \kappa)}$ the following holds for any scale $\delta > 0$ sufficiently small.

If $\eta_1, \dotsc, \eta_s$ are Borel probability measures on $E$ satisfying $\NC(\epsilon,\kappa,\tau)$ at scale $\delta$, then for all $\xi \in E^*$ with $\delta^{-1+\epsilon} \leq \norm{\xi} \leq \delta^{-1-\epsilon}$,
\[
\abs{(\eta_1 * \dotsm * \eta_s)^\wedge(\xi)} \leq \delta^{\epsilon \tau}.
\]
\end{thm}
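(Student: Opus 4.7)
The plan is to adapt the proof of \cite[Theorem 2.1]{HS}, which establishes the same statement under the stronger assumption $\eta_1 = \dotsb = \eta_s$. The argument there is in the Bourgain--Gamburd spirit: one combines an $L^2$\dash{}flattening lemma for multiplicative convolutions with the discretized product theorem for simple algebras over $\R$. Both ingredients are naturally bilinear in nature; the assumption that all convolution factors coincide is used purely for notational convenience and for propagating the $\NC$ hypothesis along the iteration. My strategy is to replay the proof, reorganising the iteration so that a "fresh" non-concentrated measure is used at each step.

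Concretely, I would proceed in three steps. \emph{Step 1 (Reduction to $L^2$\dash{}flattening).} Smooth each $\eta_i$ by convolution with a bump $P_\delta$ at scale~$\delta^{1+O(\epsilon)}$, truncated to the region $B(0,\delta^{-\epsilon}) \setminus S(\delta^{\epsilon})$ where the $\NC_0$ part is supported. By Plancherel together with a standard dyadic decomposition of $\hat P_\delta$, the desired Fourier estimate reduces to an upper bound of the form
\[
\norme{(\eta_1 * \dotsm * \eta_s) * P_\delta}_2^2 \leq \delta^{-\dim E + c(s)\tau},
\]
valid at the frequency scale under consideration. \emph{Step 2 (Bilinear flattening).} The main tool is a bilinear statement: given two probability measures $\nu, \nu'$ on $E$ satisfying $\NC(\epsilon,\kappa,\tau)$, if
\[
\norme{(\nu * \nu') * P_\delta}_2 \geq \delta^{-c \tau}\norme{\nu * P_\delta}_2
\]
fails for some small fixed $c = c(E,\kappa) > 0$, then applying Cauchy--Schwarz followed by a Balog--Szemerédi--Gowers argument extracts from $\nu$ and $\nu'$ a pair of $\delta$\dash{}separated subsets whose product set is abnormally small. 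The discretized product theorem in simple algebras (in the form established in \cite{HS}, which directly treats pairs of sets and does not require equality) then forces one of these sets to concentrate near a proper affine subspace or near the singular variety $S(\delta^\epsilon)$, contradicting $\NC(\epsilon,\kappa,\tau)$. \emph{Step 3 (Iteration).} Applying Step~2 with $\nu = \eta_1 * \dotsb * \eta_i$ and $\nu' = \eta_{i+1}$, and iterating for $s = s(E,\kappa)$ steps, produces the required decay $\delta^{c(s)\tau}$ per step in the $L^2$\dash{}norm.

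The main obstacle is that the $\NC$ hypothesis needs to apply to the first factor $\nu = \eta_1 * \dotsb * \eta_i$ in each step, and this is not preserved under multiplicative convolution in any obvious way (convolution could push mass toward low-determinant elements or concentrate it on proper subspaces). I plan to circumvent this by designing the bilinear flattening lemma so that only the \emph{second} factor $\nu' = \eta_{i+1}$ is required to satisfy $\NC$, while the first factor needs only a crude lower bound on its $L^2$\dash{}norm (which is provided by the outcome of the previous iteration). In effect, the iteration consumes one fresh $\NC$ measure per step, which explains why the conclusion requires a number of factors $s$ depending on $\kappa$ and $E$ but no structure on the $\eta_i$ beyond $\NC$. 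Once this asymmetric flattening is in place, matching the parameters to guarantee $\tau \in {(0,\epsilon\kappa)}$ and the frequency range $\delta^{-1+\epsilon} \le \norm{\xi} \le \delta^{-1-\epsilon}$ is a routine bookkeeping exercise following \cite{HS}.
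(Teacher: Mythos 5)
Your route is genuinely different from the paper's, and as written it has two real gaps. First, the reduction in your Step 1 is not valid as stated: Plancherel applied to $(\eta_1 * \dotsm * \eta_s)*P_\delta$ only controls the \emph{average} of $\abs{(\eta_1 * \dotsm * \eta_s)^\wedge}^2$ over the ball of frequencies of radius about $\delta^{-1}$, and an average bound of size $\delta^{c\tau}$ is perfectly compatible with the Fourier coefficient at the one frequency $\xi$ you care about being of size $1$ (large coefficients on a sparse set of frequencies are invisible to the $L^2$ norm). Converting $L^2$-flatness into pointwise decay at a fixed $\xi$ is a separate step: one writes $\abs{(\eta_1 * \dotsm * \eta_s)^\wedge(\xi)}^2 \leq \int \abs{(\eta_2 * \dotsm * \eta_s)^\wedge(\xi\cdot x)}^2 \dd\eta_1(x)$ and uses the non-concentration of $\eta_1$ (both the affine part and the bound away from $S(\delta^\epsilon)$) to show that the frequencies $\xi\cdot x$ spread over a substantial portion of the annulus, so that their average is controlled by the $L^2$ quantity; you have not supplied this, and it is where the hypotheses on $\norm{\xi}$ and on $\eta_1$ actually enter. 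Second, the asymmetric flattening lemma on which your whole iteration rests (structure extracted only from the fresh factor $\eta_{i+1}$, the accumulated convolution entering only through its $L^2$ norm) is asserted rather than proved; the Balog--Szemer\'edi--Gowers step ties the structured sets to a specific factor, and arranging for that factor to be the one whose $\NC$ you can contradict is precisely the nontrivial reworking of \cite[Theorem 2.1]{HS} that your ``routine bookkeeping'' hides. So the proposal is a plausible program, but its two key steps are missing.

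For comparison, the paper does not reopen the Bourgain--Gamburd machinery at all: it deduces the mixed-measure statement formally from the equal-measure case \cite[Theorem 2.1]{HS} by a polarization trick borrowed from Bourgain--Dyatlov. One sets $\eta_\lambda = \lambda_1\,\eta_1\mm\eta_1 + \dotsb + \lambda_s\,\eta_s\mm\eta_s$; since symmetrization and convex combinations preserve the $\NC$ property, for $\lambda$ in the simplex the equal-measure theorem gives $\absbig{\widehat{\eta_\lambda^{*s}}(\xi)} \leq \delta^{\epsilon_0\tau}$. But $\lambda \mapsto \widehat{\eta_\lambda^{*s}}(\xi)$ is a homogeneous polynomial of degree $s$, so the coefficient of $\lambda_1\dotsm\lambda_s$, namely $\sum_{\sigma\in\mathfrak{S}_s}\bigl((\eta_{\sigma(1)}\mm\eta_{\sigma(1)})*\dotsm*(\eta_{\sigma(s)}\mm\eta_{\sigma(s)})\bigr)^\wedge(\xi)$, is $O(\delta^{\epsilon_0\tau})$; a positivity lemma (a H\"older-type inequality showing these symmetrized multiplicative convolutions have nonnegative real Fourier transform, and bounding $\abs{(\nu*\nu'*\nu'')^\wedge(\xi)}^{2k}$ by the symmetrized version) lets one isolate the identity-permutation term and then pass back to $\abs{(\eta_1*\dotsm*\eta_s)^\wedge(\xi)}$. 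If you pursue your approach you will essentially be reproving the main theorem of \cite{HS} in bilinear form; the polarization argument is the shortcut that makes the mixed case a two-page corollary.
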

The special case where $\eta_1 = \dots = \eta_s$ are the same measure is precisely \cite[Theorem 2.1]{HS}.
We will deduce the general case from the special case using a trick from an article of Bourgain and Dyatlov~\cite{BD}.


For measures $\eta$ and $\eta'$ on $E$, we write $\eta \pp \eta'$ for the additive convolution between $\eta$ and $\eta'$. 
Similarly, $\eta \mm \eta'$ denotes the image measure of $\eta \otimes \eta'$ under the map $(x,y) \mapsto x - y$.
Finally, for integer $k \geq 1$, we write 
\[\eta^{\pp k} = \underbrace{\eta \pp \dotsb \pp \eta}_{k \text{ times}}.\]


The following two observations on the $\NC$ property are immediate.
\begin{lemm}
\label{lm:ppffNC}
Let $\epsilon, \kappa, \tau, \sigma > 0$ be parameters and let $\delta > 0$.
\begin{enumerate}
\item If $\eta$ is a Borel probability measures on $E$ satisfying $\NC(\epsilon,\kappa,\tau)$ at scale $\delta$, then $\eta \mm \eta$ 
satisfies $\NC(O(\epsilon),\kappa,\tau/2)$ at scale $\delta$.
\item Convex combinations of probability measures satisfying $\NC(\epsilon,\kappa,\tau)$ also satisfy $\NC(\epsilon,\kappa,\tau)$ at scale $\delta$.
\end{enumerate}

\end{lemm}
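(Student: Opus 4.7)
The plan is to verify both parts directly against the three clauses in the definition of $\NC_0$, exploiting the fact that each clause is a linear upper bound on the measure. For part (ii), I will write each ingredient $\eta_i = \eta_{i,0} + \nu_i$, with $\eta_{i,0}$ satisfying $\NC_0(\epsilon,\kappa,\tau)$ and $\nu_i$ of total mass at most $\delta^\tau$. Setting $\eta_0 = \sum_i \lambda_i \eta_{i,0}$ and $\nu = \sum_i \lambda_i \nu_i$ for the convex combination $\eta = \sum_i \lambda_i \eta_i$, the error $\nu$ still has total mass at most $\delta^\tau$, and each of the three $\NC_0$ bounds is inherited by $\eta_0$ as a $\lambda_i$-weighted average of the individual bounds. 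No degradation of the parameters is needed.

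For part (i), I will split $\eta = \eta_0 + \nu$ as above and expand
\[
\eta \mm \eta \;=\; \eta_0 \mm \eta_0 \;+\; \eta_0 \mm \nu \;+\; \nu \mm \eta_0 \;+\; \nu \mm \nu .
\]
The last three positive measures together have total mass at most $2 \delta^\tau + \delta^{2\tau}$, which is bounded by $\delta^{\tau/2}$ once $\delta$ is sufficiently small, so they can be absorbed into the $((\delta^{\tau/2}))$ slack of the $\NC$ conclusion. The task then reduces to checking that $\eta_0 \mm \eta_0$ satisfies $\NC_0(2\epsilon,\kappa,\tau/2)$.

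The key tool for this remaining verification is the Fubini identity $(\eta_0 \mm \eta_0)(A) = \int_E \eta_0(v + A) \, \dd \eta_0(v)$, valid for any Borel $A \subset E$. Clause (1) is a union bound: if $u, v \in B(0, \delta^{-\epsilon})$ then $u - v \in B(0, 2\delta^{-\epsilon}) \subset B(0, \delta^{-2\epsilon})$ for $\delta$ small, so the complement has $\eta_0 \mm \eta_0$-mass at most $2 \delta^\tau$. For clause (2), $v + x + S(\delta^\epsilon)$ is a translate of $S(\delta^\epsilon)$, so the hypothesis on $\eta_0$ gives the pointwise bound $\delta^\tau$ uniformly in $v$. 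For clause (3), $v + W$ is again a proper affine subspace, so the hypothesis gives the pointwise bound $\delta^{-\epsilon} \rho^\kappa \leq \delta^{-2\epsilon} \rho^\kappa$ uniformly in $v$; integrating against $\eta_0$ preserves each estimate. The only real obstacle here is bookkeeping: one must absorb the unavoidable factors of $2$ and $3$ by either taking $\delta$ small enough or mildly degrading $\epsilon$ to $2\epsilon$ and $\tau$ to $\tau/2$, which is precisely what the $O(\epsilon)$ and $\tau/2$ in the statement allow.
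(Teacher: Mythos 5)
Your verification is correct and is exactly the routine check the paper has in mind: it states this lemma without proof (``the following two observations \dots are immediate''), and your Fubini identity $(\eta_0\mm\eta_0)(A)=\int_E \eta_0(v+A)\,\dd\eta_0(v)$ together with the linearity of the three $\NC_0$ clauses is the natural way to fill it in. The only caveat is that absorbing the factors $2$ and $3$ (e.g.\ $2\delta^{-\epsilon}\le\delta^{-2\epsilon}$ and $3\delta^{\tau}\le\delta^{\tau/2}$) does require $\delta$ sufficiently small, which is harmless here since the lemma is only ever applied at scales $\delta\in[e^{-m},e^{-\alpha m/2}]$ with $m$ large.
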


\begin{lemm}
\label{lm:eta'NC}
Let $\epsilon, \kappa, \tau, \sigma > 0$ be parameters and let $\delta > 0$.
Let $\eta$ and $\eta'$ be Borel probability measures on $E$ such that $\eta = \delta^\sigma \eta' + ((1))$. 
If $\eta$ satisfies $\NC(\epsilon,\kappa,\tau)$ at scale $\delta$ then $\eta'$ satisfies $\NC(\epsilon + \sigma,\kappa,\tau - \sigma)$ at scale $\delta$.
\end{lemm}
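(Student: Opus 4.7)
The plan is to exploit the hypothesis $\eta = \delta^\sigma \eta' + ((1))$, which means there exists a positive Borel measure $\nu$ of total mass at most $1$ with $\eta = \delta^\sigma \eta' + \nu$. Since $\nu \geq 0$, this gives the pointwise domination $\delta^\sigma \eta' \leq \eta$ as measures, equivalently $\eta' \leq \delta^{-\sigma}\eta$. Combined with the decomposition $\eta = \eta_0 + ((\delta^\tau))$ coming from the $\NC(\epsilon,\kappa,\tau)$ property of $\eta$, we obtain
\[
\eta' \leq \delta^{-\sigma}\eta_0 + \tilde\nu,
\]
where $\tilde\nu$ is a positive measure of total mass at most $\delta^{\tau-\sigma}$.

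Next, define $\eta'_0$ to be the minimum $\eta' \wedge (\delta^{-\sigma}\eta_0)$ of the two measures (for instance, by choosing a common dominating measure and taking the pointwise minimum of the Radon--Nikodym derivatives). Then $\eta'_0 \leq \delta^{-\sigma}\eta_0$, and the residual $\eta' - \eta'_0 = (\eta' - \delta^{-\sigma}\eta_0)_+$ is dominated by $\tilde\nu$, hence has total mass at most $\delta^{\tau-\sigma}$. Thus $\eta' = \eta'_0 + ((\delta^{\tau-\sigma}))$ has the required form for the $\NC$ decomposition.

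It remains to verify the three conditions of $\NC_0(\epsilon+\sigma,\kappa,\tau-\sigma)$ for $\eta'_0$. Each one is obtained by bounding $\eta'_0 \leq \delta^{-\sigma}\eta_0$ and using the corresponding property of $\eta_0$, together with the elementary monotonicity facts that (for $\delta < 1$) $B(0,\delta^{-\epsilon-\sigma}) \supset B(0,\delta^{-\epsilon})$ and $S(\delta^{\epsilon+\sigma}) \subset S(\delta^{\epsilon})$:
\[
\eta'_0\bigl(E \setminus B(0,\delta^{-\epsilon-\sigma})\bigr) \leq \delta^{-\sigma}\eta_0\bigl(E \setminus B(0,\delta^{-\epsilon})\bigr) \leq \delta^{\tau-\sigma},
\]
\[
\eta'_0\bigl(x + S(\delta^{\epsilon+\sigma})\bigr) \leq \delta^{-\sigma}\eta_0\bigl(x + S(\delta^{\epsilon})\bigr) \leq \delta^{\tau-\sigma},
\]
and for every proper affine subspace $W \subset E$ and every $\rho \geq \delta$,
\[
\eta'_0\bigl(W^{(\rho)}\bigr) \leq \delta^{-\sigma}\eta_0\bigl(W^{(\rho)}\bigr) \leq \delta^{-\epsilon-\sigma}\rho^\kappa.
\]
There is no real obstacle here: the whole argument is a domination/book-keeping exercise, and the parameter shifts $(\epsilon,\tau) \mapsto (\epsilon+\sigma,\tau-\sigma)$ are exactly what is needed to absorb the factor $\delta^{-\sigma}$ in each of the three $\NC_0$ bounds.
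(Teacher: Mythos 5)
Your proof is correct; the paper states this lemma without proof as an ``immediate'' observation, and your argument is exactly the intended bookkeeping: from $\delta^\sigma\eta' \leq \eta = \eta_0 + ((\delta^\tau))$ you get $\eta' \leq \delta^{-\sigma}\eta_0 + ((\delta^{\tau-\sigma}))$, and taking $\eta_0' = \eta' \wedge (\delta^{-\sigma}\eta_0)$ gives a decomposition $\eta' = \eta_0' + ((\delta^{\tau-\sigma}))$ for which the three $\NC_0$ bounds with the shifted parameters $(\epsilon+\sigma,\kappa,\tau-\sigma)$ follow by monotonicity exactly as you wrote. No gaps.
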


Finally, we will need to compare Fourier transform of multiplicative convolutions with that of multiplicative convolutions of additive convolutions.
\begin{lemm}
\label{lm:nu123}
Let $\nu$, $\nu'$, $\nu''$ be Borel probability measures on $E$, then for any integer $k \geq 1$, the Fourier transform of $\nu * (\nu'^{\pp k} \mm \nu'^{\pp k}) * \nu''$ takes non-negative real values and moreover,
\[\forall \xi \in E^*,\quad \abs{(\nu * \nu' * \nu'')^{\wedge}(\xi)}^{2k} \leq \bigl(\nu * (\nu'^{\pp k} \mm \nu'^{\pp k}) * \nu''\bigr)^{\wedge}(\xi).\]
\end{lemm}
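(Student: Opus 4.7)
The strategy is to reduce both assertions to three elementary ingredients: Fubini's theorem, the Fourier identity $\widehat{\sigma \mm \sigma} = \abs{\hat\sigma}^2$, and Jensen's inequality. The argument is entirely formal, and I do not anticipate any serious obstacle; the only micro-point is keeping the bilinearity of multiplication straight so that the linear form that arises below really belongs to $E^*$.

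First, since multiplication in $E$ is bilinear, for each fixed $x,z \in E$ the map $y \mapsto \xi(xyz)$ is a linear form $\xi_{x,z} \in E^*$. Fubini then yields the key identity, valid for any Borel probability measure $\rho$ on $E$:
\[(\nu * \rho * \nu'')^\wedge(\xi) \;=\; \int_E\int_E \widehat{\rho}(\xi_{x,z}) \dd\nu(x)\dd\nu''(z).\]

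Second, for any Borel probability measure $\sigma$ on $E$ and any $\zeta \in E^*$ one has
\[\widehat{\sigma^{\pp k}\mm \sigma^{\pp k}}(\zeta) \;=\; \absbig{\widehat{\sigma^{\pp k}}(\zeta)}^2 \;=\; \abs{\widehat{\sigma}(\zeta)}^{2k},\]
since additive convolution multiplies Fourier transforms and $\widehat{\tau\mm\tau} = \hat\tau \cdot \overline{\hat\tau}$; this is in particular a non-negative real number. Applying this with $\sigma = \nu'$ and $\zeta = \xi_{x,z}$ and substituting into the Fubini identity above with $\rho = \nu'^{\pp k}\mm\nu'^{\pp k}$ shows that $(\nu * (\nu'^{\pp k}\mm\nu'^{\pp k}) * \nu'')^\wedge(\xi)$ is the integral of a non-negative integrand against the probability measure $\nu\otimes\nu''$, proving the first claim.

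Third, I would apply Jensen's inequality in the form $\abse{\int f\dd\mu}^{p} \leq \int \abs{f}^{p}\dd\mu$, valid for $p \geq 1$ and any probability measure $\mu$, with $p = 2k$, $\mu = \nu\otimes\nu''$, and $f(x,z) = \widehat{\nu'}(\xi_{x,z})$:
\[\absbig{\textstyle\int \widehat{\nu'}(\xi_{x,z})\dd(\nu\otimes\nu'')(x,z)}^{2k} \;\leq\; \int \abs{\widehat{\nu'}(\xi_{x,z})}^{2k}\dd(\nu\otimes\nu'')(x,z).\]
By the Fubini identity applied to $\rho = \nu'$ the left-hand side equals $\abs{(\nu*\nu'*\nu'')^\wedge(\xi)}^{2k}$, while by the Fourier identity of the second step together with the Fubini identity applied to $\rho = \nu'^{\pp k}\mm\nu'^{\pp k}$ the right-hand side equals $(\nu * (\nu'^{\pp k}\mm\nu'^{\pp k}) * \nu'')^\wedge(\xi)$. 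This is precisely the inequality in the statement, and concludes the proof.
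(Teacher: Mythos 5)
Your proof is correct and follows essentially the same route as the paper: the paper applies H\"older's inequality to the inner integral $(x,z) \mapsto \int e(\xi(xyz)) \dd \nu'(y)$ and expands the resulting $2k$-th power, which is exactly your combination of the Fubini identity, the relation $\widehat{\nu'^{\pp k}\mm \nu'^{\pp k}}(\zeta) = \abs{\widehat{\nu'}(\zeta)}^{2k}$, and Jensen's inequality. Your explicit verification of the non-negativity claim is a minor presentational plus, but the argument is the same.
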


\begin{proof}
By definition,
\[(\nu * \nu' * \nu'')^{\wedge}(\xi) = \iiint e(\xi(xyz)) \dd \nu(x) \dd \nu'(y) \dd \nu''(z).\]
By Hölder's inequality applied to the function $(x,z) \mapsto \int e(\xi(xyz)) \dd \nu'(y)$,
\begin{align*}
&\abs{(\nu * \nu' * \nu'')^{\wedge}(\xi)}^{2k} \\
\leq& \iint \abse{\int e(\xi(xyz)) \dd \nu'(y)}^{2k}  \dd \nu(x)  \dd \nu''(z)\\
=& \iiint e\bigl(\xi\bigl(x(y_1 + \dotsb + y_k - y_{k+1} - \dotsb - y_{2k})z\bigr)\bigr)\\ 
&\hspace{12em} \dd \nu'^{\otimes 2k}(y_1,\dotsc,y_{2k})  \dd \nu(x)  \dd \nu''(z)\\
=& \bigl(\nu * (\nu'^{\pp k} \mm \nu'^{\pp k}) * \nu''\bigr)^{\wedge}(\xi).\qedhere
\end{align*}
\end{proof}

\begin{proof}[Proof of Theorem~\ref{thm:fourier}]
For $\lambda = (\lambda_1,\dotsc,\lambda_s) \in \C^s$, define
\[ \eta_\lambda = \lambda_1 \eta_1 \mm \eta_1 + \dotsb + \lambda_s \eta_s \mm \eta_s.\]
Consider the function $F \colon \C^s \to \C$ defined by
\[F(\lambda) = \widehat{\eta_\lambda^{*s}}(\xi) = (\eta_\lambda * \dotsm * \eta_\lambda)^\wedge(\xi).\]
For all $\lambda = (\lambda_1,\dotsc,\lambda_s) \in \R^s$ with $\lambda_1 + \dotsb + \lambda_s = 1$, by Lemma~\ref{lm:ppffNC}, $\eta_\lambda$ satisfy $\NC(\epsilon,\kappa,\tau/2)$ at scale $\delta$. 
Hence by \cite[Theorem 2.1]{HS}, we can bound
\[\abs{F(\lambda)} \leq \delta^{\epsilon_0 \tau}\]
for some $\epsilon_0 = \epsilon_0(E,\kappa)$.

Observe that $F(\lambda)$ is a homogeneous polynomial function of degree $s$.
Then above implies 
\[\abs{\partial_1 \dotsm \partial_s F(0,\dotsc,0)} \ll \delta^{\epsilon_0 \tau}.\]
The left-hand side is the coefficient of the monomial term $\lambda_1 \dotsm \lambda_s$, which is
\[\partial_1 \dotsm \partial_s F(0,\dotsc,0) = \sum_{\sigma \in \mathfrak{S}_s}  \bigl((\eta_{\sigma(1)} \mm \eta_{\sigma(1)}) * \dotsm * (\eta_{\sigma(s)} \mm \eta_{\sigma(s)}) \bigr)^\wedge(\xi).\]
By Lemma~\ref{lm:nu123}, each term of the right-hand side is non-negative real. 
It follows that
\[\absbig{\bigl((\eta_{1} \mm \eta_{1}) * \dotsm * (\eta_{s} \mm \eta_{s}) \bigr)^\wedge(\xi)} \ll \delta^{\epsilon_0 \tau}.\]
In view of Lemma~\ref{lm:nu123}, this concludes the proof of the theorem.
\end{proof}

\subsection{Fourier decay for linear random walks.}
\label{ss:returnTime}
From now on let $\mu$ be a probability measure on $\Aut(\Tbb^d) = \GL_d(\Z)$.
Let $\lambda_1$ denote the top Lyapunov exponent of $\mu$ and 
let $\Gamma$ denote the subgroup generated by $\Supp(\mu)$.
We assume
\begin{enumerate}
\item The measure $\mu$ has a finite exponential moment.
\item The action of $\Gamma$ on $\R^d$ is strongly irreducible.
\end{enumerate}
Let $G$ denote the Zariski closure of $\Gamma$ in $\GL_d$ and $G^\circ$ the identity component of $G$; then $\Gamma_0 = \Gamma\cap G^\circ$ is a finite index subgroup of $\Gamma$.
Let $E$ denote the subalgebra generated by $G^\circ(\R)$. If $\gamma_1,\dots,\gamma_J$ are a complete set of representatives for the cosets in $\Gamma/\Gamma_0$, then for any $\gamma \in \Gamma$ we have that $\gamma E = \gamma_j E$ for some $1\leq j \leq J$.

\begin{thm}[Fourier decay for random walks in $\GL_d(\Z)$]
\label{thm:decaymun}
Let $\Gamma$, $\mu$ and $\gamma_1,\dots, \gamma_J$ be as above. Then there exists $\alpha_0 = \alpha_0(\mu) > 0$ such that for every $\alpha \in (0,\alpha_0)$, there exists $c = c(\mu, \alpha) > 0$ such that for all $n$ sufficiently large, all $1 \leq j \leq J$ and $\xi \in E^*$ with \[e^{\alpha n} \leq e^{\lambda_1 n}\norm{\xi} \leq e^{\alpha_0 n}\] the following estimate on Fourier coefficients of $\mu^{*n}$ holds:
\[ \Bigl\lvert \int_{\gamma_j E} e\bigl(\xi(\gamma_j^{-1}g)\bigr) \dd\mu^{*n}(g) \Bigr\rvert  \leq e^{-c_0n}.\]
\end{thm}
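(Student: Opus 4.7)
My plan is to reduce Theorem~\ref{thm:decaymun} to the Zariski-connected case (\cite[Theorem 3.20]{HS}) via a coset decomposition that produces an $s$-fold multiplicative convolution of measures on $E$ satisfying $\NC$, to which Theorem~\ref{thm:fourier} will then apply. Since $G^\circ$ is normal in $G$, the finite-index subgroup $\Gamma_0 = \Gamma \cap G^\circ$ is normal in $\Gamma$, of index $J$. The first step is to construct an auxiliary random walk supported on $\Gamma_0$ with Zariski-connected Zariski closure. Let $T$ be the first return time to $\Gamma_0$ of the $\mu$-walk, and let $\mu_0$ denote the law of $g_1 \cdots g_T$. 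Exponential mixing of the induced random walk $\bar\mu$ on the finite group $\Gamma/\Gamma_0$ (after passing to an aperiodic component if needed), combined with the finite exponential moment of $\mu$, ensures that $T$ and hence $\mu_0$ have finite exponential moments. The measure $\mu_0$ generates a Zariski-dense subgroup of the connected algebraic group $G^\circ$ and acts strongly irreducibly on $\R^d$ (inherited from $\Gamma$ via the finite-index inclusion), so \cite[Theorem 3.20]{HS} applies to $\mu_0$; in particular the intermediate $\NC(\epsilon,\kappa,\tau)$ property for $\mu_0^{*k}$ on $E$ at scale $e^{-\lambda_1(\mu_0) k}$ holds for all $k$ sufficiently large.

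Next, fix $s = s(E,\kappa)$ and $\epsilon = \epsilon(E,\kappa)$ as in Theorem~\ref{thm:fourier}. Writing $n = sm + r$ with $r$ bounded, decompose $\mu^{*n}$ along the cosets of $\Gamma_0$ visited by each factor:
\[
\mu^{*n}|_{\gamma_j \Gamma_0} \;=\; \sum_{c_1 \cdots c_{s+1} \equiv \gamma_j} \mu^{*m}|_{c_1 \Gamma_0} * \cdots * \mu^{*m}|_{c_s \Gamma_0} * \mu^{*r}|_{c_{s+1} \Gamma_0}.
\]
By normality of $\Gamma_0$ in $\Gamma$, each summand can be rearranged as $\gamma_j' \cdot (\eta_1 * \cdots * \eta_s)$, where $\gamma_j' \in \gamma_j \Gamma_0$ is a fixed element and each $\eta_i$ is a probability measure on $\Gamma_0 \subset E$ obtained from the normalized restriction $\delta_{c_i}^{-1} \mu^{*m}|_{c_i \Gamma_0}$ by conjugation by a bounded coset representative; the boundary factor coming from $\mu^{*r}|_{c_{s+1} \Gamma_0}$ is a bounded multiplicative perturbation which can be absorbed into one of the $\eta_i$ at the cost of worsening the $\NC$ parameters by a controlled amount.

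The crux of the argument is to verify that each normalized $\eta_i$ satisfies $\NC(\epsilon,\kappa,\tau)$ on $E$ at scale $e^{-\lambda_1 m}$. Using the renewal structure of the $\mu$-walk, the walk of length $m$ conditioned to land in coset $c_i$ admits, with overwhelming probability, approximately $K \approx m / \Ebb[T]$ returns to $\Gamma_0$ and factors as a $\mu_0^{*K}$-distributed element times a bounded overshoot (the segment between the last return and time $m$, conditioned on the terminal coset). Since $\lambda_1(\mu_0) = \Ebb[T] \cdot \lambda_1(\mu)$, the scale $e^{-\lambda_1(\mu_0) K}$ matches $e^{-\lambda_1 m}$ up to lower-order fluctuations, and the $\NC$ property for $\mu_0^{*K}$ transfers to $\eta_i$ through multiplicative convolution by the overshoot and normalization: the overshoot has bounded norm and bounded inverse norm (by the finite exponential moments of $\mu$ and of $g \mapsto g^{-1}$), and the conditioning event has probability bounded away from $0$ uniformly in $n$ by the mixing of $\bar\mu$. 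I expect this transfer of $\NC$ across the conditioning and the overshoot to be the main obstacle: conditioning on a terminal coset could in principle destroy the delicate affine non-concentration in condition~(iii), so one must carefully exploit the exponential mixing of $\bar\mu$ together with the independence of successive excursions between returns. Once this is in place, Theorem~\ref{thm:fourier} applied to each $s$-fold convolution, summed over the at most $J^{s+1}$ coset configurations, yields $\absbig{(\mu^{*n}|_{\gamma_j \Gamma_0})^\wedge(\xi)} \leq e^{-c_0 n}$ for $\xi$ in the range prescribed by the theorem.
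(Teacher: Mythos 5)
Your high-level architecture is the same as the paper's: pass to the induced walk on $G^\circ$ via return times (your $\mu_0$ is the paper's $\mu^\circ$), invoke Aoun's lemmas for its exponential moment, the relation $\lambda_1(\mu^\circ)=T\lambda_1$ and large deviations for return times, get the non-concentration input from the Zariski-connected case of \cite{HS}, and conclude with Theorem~\ref{thm:fourier} for $s$ possibly distinct measures. But there is a genuine gap, and it sits exactly where you flag ``the main obstacle'': your decomposition slices the walk at the \emph{deterministic} times $m,2m,\dotsc$, so the blocks are normalized restrictions of $\mu^{*m}$ to cosets $c_i\Gamma_0$, and no non-concentration statement is available for such measures. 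The results of \cite[Section 3]{HS} apply only to the connected walk $\mu^\circ$ (and, more precisely, they give $\NC$ for the rescaled additive convolution $(\tilde\mu^\circ_m)^{\pp D}\mm(\tilde\mu^\circ_m)^{\pp D}$, not for the walk measure itself), so $\NC$ must be transported from powers of $\mu^\circ$ to your $\eta_i$. Your renewal sketch does not achieve this: the overshoot between the last return and time $m$ is not a bounded element (its length is only controlled up to exponentially small probability), the conditioning on the terminal coset interacts with the excursion structure, and even after sorting this out the measure you need $\NC$ for is the law of the product of the excursions conditioned on its real-time length --- which is not a power of $\mu^\circ$, so a further argument is required that your proposal does not supply.

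The paper's proof resolves precisely this point by slicing at the \emph{random} return times $\tau(m),\tau(2m),\dotsc,\tau(sm)$ instead of at deterministic times: by the Markov property, $\mu^{*n}=\sum_{l_1+\dotsb+l_s+k=n}p_{l_1}\dotsm p_{l_s}\,\mu^{*k}*\nu_{l_s}*\dotsm*\nu_{l_1}+((\Pbb[\tau(sm)>n]))$, where $\nu_l$ is the law of $g_{\tau(m)}\dotsm g_1$ conditioned on $\tau(m)=l$ and $p_l=\Pbb[\tau(m)=l]$. Since $(\mu^\circ)^{*m}=\sum_l p_l\nu_l$, the $\NC$ property passes from $(\tilde\mu^\circ_m)^{\pp D}\mm(\tilde\mu^\circ_m)^{\pp D}$ to the rescaled $\nu_l^{\pp D}\mm\nu_l^{\pp D}$ by the trivial weight argument (Lemma~\ref{lm:eta'NC}), once one discards the $l\notin\Lcal=\{l:p_l\ge e^{-\alpha\tau m/4D}\}$, a loss of total mass $e^{-cn}$ thanks to the large-deviation bound on $\tau(sm)$. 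The leftover deterministic-time factor is then a single $\mu^{*k}$ with $k\le 3\omega n$, and it is handled not by absorbing it into a block but at the level of frequencies: one integrates the bound for $(\nu_{l_s}*\dotsm*\nu_{l_1})^\wedge$ at the shifted frequency $\xi\cdot\gamma_j^{-1}g$ over $g\sim\mu^{*k}$, using the exponential moment and the choice of $\omega$ to keep that frequency in the admissible window. If you want to keep your deterministic-time decomposition you would essentially have to reprove this random-time device; as written, the key non-concentration step of your argument is unproved and is unlikely to go through in the form you describe.
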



Let $(g_n)_{n \geq 1}$ be a sequence of independent random variables distributed according to $\mu$.
Consider the return times to $G^\circ$, 
\[\tau(1) = \inf \{\, n \geq 1 \mid g_n \dotsm g_1 \in G^\circ \,\}\]
and recursively for $m \geq 2$,
\[\tau(m) = \inf \{\, n > \tau(m) \mid g_n \dotsm g_1 \in G^\circ \,\}.\]
They are the return times of a Markov chain on the finite space $G/G^\circ$.
Thus for every $m \geq 1$, $\tau(m)$  is almost surely finite.

Let $\mu^\circ$ denote the law of $g_{\tau(1)} \dotsm g_1$, which is a probability measure on $G^\circ$.
It has the following properties.
\begin{lemm}[{\cite[Lemma 4.40]{Aoun2011}}]
If $\mu$ has a finite exponential moment, then so does~$\mu^\circ$.
\end{lemm}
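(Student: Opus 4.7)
The plan is to combine two standard ingredients: (1) the projection of the random walk to the finite quotient $\Gamma/\Gamma_0$ (equivalently to $G/G^\circ$) is an irreducible Markov chain on a finite set, so the first return time $\tau(1)$ to the identity coset has geometrically decaying tails, and (2) the operator norm (and the inverse operator norm) is submultiplicative. Together with H\"older's inequality they yield the desired exponential moment for $\mu^\circ$.

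More precisely, I would first record that the walk $\bar g_n \cdots \bar g_1$ on the finite group $\Gamma/\Gamma_0 \simeq G/G^\circ$ is irreducible (its one-step distribution generates this group since $\Supp(\mu)$ generates $\Gamma$), hence there exist constants $C_1, c_1 > 0$ with $\Pbb[\tau(1) > n] \leq C_1 e^{-c_1 n}$ for every $n \geq 1$. Next, set $\Psi(g) = \max\{\norm{g}, \norm{g^{-1}}\}$, which is submultiplicative on $\GL_d(\R)$, so that
\[
\Psi\bigl(g_{\tau(1)} \cdots g_1\bigr) \;\leq\; \prod_{i=1}^{\tau(1)} \Psi(g_i).
\]

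Fix the $\beta > 0$ provided by the exponential moment assumption on $\mu$, and let $\alpha \in (0,\beta)$ and $p, q > 1$ with $\tfrac{1}{p} + \tfrac{1}{q} = 1$ be parameters to be chosen. Writing
\[
\int \Psi(g)^\alpha \dd\mu^\circ(g) \;=\; \sum_{n=1}^{\infty} \Ebb\Bigl[\Psi\bigl(g_n \cdots g_1\bigr)^{\alpha}\, \indic_{\tau(1)=n}\Bigr],
\]
I would apply H\"older's inequality to each term with exponents $p$ and $q$, using submultiplicativity on the first factor, independence of the $g_i$, and the tail bound on $\tau(1)$ for the second, to obtain
\[
\Ebb\Bigl[\Psi\bigl(g_n \cdots g_1\bigr)^{\alpha}\, \indic_{\tau(1)=n}\Bigr]
\;\leq\; \Bigl(\Ebb\bigl[\Psi(g_1)^{\alpha p}\bigr]\Bigr)^{n/p} \bigl(C_1 e^{-c_1 n}\bigr)^{1/q}.
\]
Choosing $p > 1$ small enough that $\alpha p \leq \beta$ makes the first factor equal to $M^n$ for a finite constant $M = M(\alpha,p)$, and then further shrinking $\alpha$ (which shrinks $M$, since $M \to 1$ as $\alpha \to 0$) guarantees that $M \cdot e^{-c_1/q} < 1$. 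The resulting geometric series converges, proving that $\int \Psi(g)^\alpha \dd\mu^\circ(g) < \infty$, which is exactly the finite exponential moment for $\mu^\circ$.

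The only step that requires any care is the exponential tail of $\tau(1)$: one must verify that the Markov chain on the finite set $\Gamma/\Gamma_0$ driven by the $\bar g_i$ is genuinely irreducible, so that the hitting time of the identity coset has a uniform geometric tail. This follows because $\Supp(\mu)$ generates $\Gamma$ modulo $\Gamma_0$; beyond this, the remainder is a routine H\"older plus geometric series argument. I would expect the whole lemma to take a short paragraph once the tail estimate is in hand.
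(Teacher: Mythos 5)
Your argument is correct: the paper does not prove this lemma itself (it is quoted from Aoun), and your proof — exponential tail for the return time $\tau(1)$ of the induced irreducible chain on the finite quotient $\Gamma/\Gamma_0 \simeq G/G^\circ$, combined with submultiplicativity of $\max\{\norm{g},\norm{g^{-1}}\}$, H\"older's inequality, and a geometric series after shrinking the exponent — is essentially the standard argument given in the cited reference. The only cosmetic point is that the convergence condition is $M^{1/p}e^{-c_1/q}<1$ rather than $M e^{-c_1/q}<1$, but since $M\geq 1$ your stronger requirement is still achievable by shrinking $\alpha$, so nothing is lost.
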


Denote $T = \Ebb[\tau(1)]$.
Let $\lambda_1 = \lambda_1(\mu)$ denote the top Lyapunov exponent of $\mu$.
\begin{lemm}[{\cite[Lemma 4.42]{Aoun2011}}]
\label{lm:mucirclambda}
The top Lyapunov exponent of $\mu^\circ$ is
\[\lambda_1(\mu^\circ) = T \lambda_1.\]
\end{lemm}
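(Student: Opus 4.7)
The plan is to identify the iterated products along the return times to $G^\circ$ with an i.i.d.\ random walk driven by $\mu^\circ$, and then compare the asymptotic growth rates of the two parametrizations using the law of large numbers for $\tau(m)/m$.

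Setting $\tau(0) = 0$, the strong Markov property for the random walk $(g_n \dotsm g_1)_n$ projected to the finite space $G/G^\circ$ implies that the blocks $h_m = g_{\tau(m)} \dotsm g_{\tau(m-1)+1}$, $m \geq 1$, are i.i.d.\ with common law $\mu^\circ$ and satisfy the telescoping identity $h_m \dotsm h_1 = g_{\tau(m)} \dotsm g_1$. Since $G/G^\circ$ is finite and the associated Markov chain is irreducible on the coset of the identity, the first return time $\tau(1)$ is almost surely finite, with $T = \Ebb[\tau(1)] < +\infty$ (in fact $\tau(1)$ has finite exponential moments, which is exactly what guarantees that $\mu^\circ$ also has a finite exponential moment as recalled just above).

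Next, I would apply the Furstenberg--Kesten theorem to both measures. Since $\mu$ and $\mu^\circ$ both have a finite first logarithmic moment,
\[
\frac{1}{n}\log\norm{g_n \dotsm g_1} \xrightarrow[n\to\infty]{\text{a.s.}} \lambda_1(\mu) \quad \text{and} \quad \frac{1}{m}\log\norm{h_m\dotsm h_1} \xrightarrow[m\to\infty]{\text{a.s.}} \lambda_1(\mu^\circ).
\]
By Kolmogorov's law of large numbers applied to the i.i.d.\ increments $\tau(m) - \tau(m-1)$, we have $\tau(m)/m \to T$ almost surely, and in particular $\tau(m) \to +\infty$ almost surely. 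Hence the Furstenberg--Kesten convergence for $\mu$ may be specialised along the random subsequence $(\tau(m))_{m\geq 1}$, yielding $\frac{1}{\tau(m)}\log\norm{g_{\tau(m)}\dotsm g_1} \to \lambda_1(\mu)$ almost surely. Combining these with the identity
\[
\frac{1}{m}\log\norm{h_m\dotsm h_1} = \frac{\tau(m)}{m} \cdot \frac{1}{\tau(m)}\log\norm{g_{\tau(m)}\dotsm g_1}
\]
and passing to the limit $m\to\infty$ produces $\lambda_1(\mu^\circ) = T \lambda_1(\mu)$.

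The only delicate point is passing from the deterministic-time convergence of Furstenberg--Kesten to convergence along the random subsequence $\tau(m)$; this is handled cleanly by the almost-sure form of Furstenberg--Kesten, which automatically gives convergence along \emph{any} sequence tending to infinity almost surely. An equivalent route would be Kingman's subadditive ergodic theorem applied to the stationary sequence $(g_n)$ composed with the return-time skew product, but the almost-sure formulation above is the most direct.
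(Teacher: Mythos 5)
Your argument is correct: the cycle decomposition at the return times gives i.i.d.\ blocks of law $\mu^\circ$, the strong law gives $\tau(m)/m \to T$ almost surely, and since the Furstenberg--Kesten convergence $\frac{1}{n}\log\norm{g_n\dotsm g_1}\to\lambda_1(\mu)$ holds almost surely it indeed passes to the random subsequence $\tau(m)\to\infty$, yielding $\lambda_1(\mu^\circ)=T\lambda_1$. The paper itself gives no proof --- the lemma is quoted from Aoun's thesis --- and your stopping-time/law-of-large-numbers argument is essentially the standard proof of that cited result, so there is nothing to add.
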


\begin{lemm}[{\cite[Lemma 4.42]{Aoun2011}}]
\label{lm:LDreturn}
Given $\omega > 0$, there is $c = c(\mu,\omega) > 0$ such that for all $m$ sufficiently large, 
\[\Pbb\bigl[ \abs{\tau(m) - T m} \geq \omega m\bigr] \leq e^{-cm}.\]
\end{lemm}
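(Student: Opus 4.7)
The plan is to reduce Lemma~\ref{lm:LDreturn} to Cramér's classical large deviation theorem for sums of i.i.d.\ real random variables. The three ingredients I would assemble are: a renewal structure for the sequence $\tau(m)$; a finite exponential moment for the increments; and a standard Chernoff bound.

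First I would observe that $S_n = g_n \dotsm g_1 \in \Gamma$ projects to a Markov chain on the \emph{finite} coset space $\Gamma/\Gamma_0$ (finite because $\Gamma_0 = \Gamma \cap G^\circ$ has finite index in $\Gamma$), and $\tau(m)$ is exactly the $m$-th return time of this chain to the identity coset. At times $\tau(m)$ the chain is in the identity coset, so the strong Markov property gives that the increments $X_i = \tau(i) - \tau(i-1)$ (with $\tau(0) = 0$) are i.i.d.\ copies of $\tau(1)$. In particular $\tau(m) = \sum_{i=1}^m X_i$ with $\Ebb[X_1] = T$.

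Second, I would check that $X_1$ has a finite exponential moment. Since $\Supp(\mu)$ generates $\Gamma$, the induced chain on $\Gamma/\Gamma_0$ is irreducible on the finite quotient, so there exist $n_0 \in \N$ and $p > 0$ such that from every state the chain returns to the identity coset within $n_0$ steps with probability $\geq p$. Iterating this bound via the Markov property yields
\[\Pbb[\tau(1) > k n_0] \leq (1-p)^k \quad \text{for every } k \geq 0,\]
which gives $\Ebb[e^{s \tau(1)}] < \infty$ for some $s > 0$. Applying Cramér's theorem (or equivalently a Chernoff bound on $\Ebb[e^{\pm s(X_1 - T)}]$) to the centered i.i.d.\ sum $\tau(m) - Tm$ then produces, for any $\omega > 0$, a constant $c = c(\mu,\omega) > 0$ with
\[\Pbb\bigl[\abs{\tau(m) - Tm} \geq \omega m\bigr] \leq e^{-c m}\]
for all $m$ sufficiently large, as required.

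The only non-automatic point is the exponential tail of $\tau(1)$ in the second step; but the argument above is entirely routine once one notes that the whole analysis takes place on the finite state space $\Gamma/\Gamma_0$. Everything else is either strong Markov or Cramér, and neither step depends on the algebraic geometry of $G$ beyond the finiteness of the component group $G/G^\circ$ (which is where the finiteness of $\Gamma/\Gamma_0$ comes from).
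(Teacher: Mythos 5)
Your argument is correct. Note that the paper does not prove this lemma at all: it is quoted verbatim from Aoun (Lemma~4.42 of the cited work), so there is no in-paper proof to compare against; your route --- strong Markov on the finite quotient $\Gamma/\Gamma_0$ giving i.i.d.\ renewal increments $\tau(i)-\tau(i-1)$, a geometric tail for $\tau(1)$ from irreducibility of the induced walk on the finite group (here one uses that $\Gamma_0$ is normal, since $G^\circ$ is normal in $G$, and that in a finite group the semigroup generated by $\Supp(\theta_*\mu)$ is all of $\Gamma/\Gamma_0$), and then Chernoff/Cram\'er for $\tau(m)=\sum_i X_i$ --- is exactly the standard proof of that cited result.
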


Note that the support of $\mu^\circ$ generates $\Gamma \cap G^\circ$, whose Zariski closure is $G^\circ$.
For $m \geq 1$, in view of Lemma~\ref{lm:mucirclambda}, it is appropriate to rescale $(\mu^\circ)^{*m}$ by a factor of $e^{-T\lambda_1 m}$. Put
\[\tilde\mu^\circ_m =(e^{-T\lambda_1 m})_*(\mu^\circ)^{*m}.\]
Under the assumptions recalled at the beginning of the paragraph, $G^\circ$ acts irreducibly on $\R^d$ and is Zariski-connected.
Thus, we can apply the results in \cite[Section 3]{HS} to the random walk defined by $\mu^\circ$.
As explained in \cite[Proof of Theorem 3.20]{HS}, Proposition~3.1 and Proposition~3.2 of \cite{HS} imply the following.
\begin{lemm}
\label{lm:muppD}
Write $D = \dim E$. There exists $\kappa = \kappa(\mu^\circ) > 0$ such that given any $\alpha > 0$ and $\epsilon > 0$ there exists $\tau > 0$ such that 
the additive convolution $(\tilde\mu^\circ_m)^{\pp D} \mm (\tilde\mu^\circ_m)^{\pp D}$ satisfies $\NC(\epsilon,\kappa,\tau)$ in $E$ at all scales $\delta \in {[e^{-m},e^{-\alpha m}]}$ for all $m \geq 1$ sufficiently large.
\end{lemm}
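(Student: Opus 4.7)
The plan is to verify the three defining properties of $\NC_0(\epsilon,\kappa,\tau)$ for $(\tilde\mu^\circ_m)^{\pp D} \mm (\tilde\mu^\circ_m)^{\pp D}$ directly, after discarding a bad set of mass at most $\delta^{\tau}$. All three will reduce to analogous properties of $\tilde\mu^\circ_m$ itself, which split between large deviation estimates for $\mu^\circ$ (for conditions (i) and (ii)) and affine non-concentration (condition (iii)). The crucial setup is that $\mu^\circ$ retains a finite exponential moment and that its support generates the Zariski-connected group $G^\circ$, which acts strongly irreducibly on $\R^d$, so that \cite[Propositions~3.1 and~3.2]{HS} apply verbatim to the $\mu^\circ$-random walk.

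For the tail bound (i), Theorem~\ref{thm:LD} applied to $\mu^\circ$ (whose top Lyapunov exponent is $T\lambda_1$ by Lemma~\ref{lm:mucirclambda}) gives, for any $\omega > 0$, a constant $c > 0$ such that $(\mu^\circ)^{*m}(\{g:\norm{g} > e^{(T\lambda_1 + \omega)m}\}) \leq e^{-cm}$. After rescaling by $e^{-T\lambda_1 m}$, $\tilde\mu^\circ_m$ is supported, up to exponentially small mass, in $B(0,e^{\omega m})$. Choosing $\omega$ small enough compared to $\alpha\epsilon$ gives $e^{\omega m} \leq \delta^{-\epsilon}$ for every $\delta \leq e^{-\alpha m}$, and the additive combination of $2D$ copies merely multiplies this radius by $2D$. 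For condition (ii), an analogous large deviation applied to the $d$-th exterior power (equivalently, to $\log\abs{\det g}$) shows that $\abs{\det(g)}$ concentrates at $\exp(m T(\lambda_1 + \dots + \lambda_d))$; rescaling yields a polynomial lower bound on $\abs{\det x}$ outside an exponentially small set of $\tilde\mu^\circ_m$. A standard perturbation estimate for the determinant then transfers this to the convolution after removing another exponentially small bad piece.

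Condition (iii) is the main content and is where \cite[Propositions~3.1 and~3.2]{HS} are used. Applied to the Zariski-connected strongly irreducible random walk defined by $\mu^\circ$, they produce a $\kappa = \kappa(\mu^\circ) > 0$ such that, for any proper affine subspace $W \subsetneq E$ and any $\rho \geq e^{-m}$,
\[\tilde\mu^\circ_m(W^{(\rho)}) \leq \delta^{-\epsilon}\rho^\kappa.\]
The operation $(\cdot)^{\pp D} \mm (\cdot)^{\pp D}$ preserves this bound by Fubini: fixing all but one of the $2D$ independent samples forces the remaining one into a fixed translate of $W^{(\rho)}$, the $\rho$-neighbourhood of a proper affine subspace, whose $\tilde\mu^\circ_m$-mass is bounded by $\delta^{-\epsilon}\rho^\kappa$. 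Integrating over the other variables preserves the bound.

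The main obstacle is the parameter calibration: the three bad sets removed for (i), (ii), (iii) must together have total mass at most $\delta^{\tau}$, and the $\epsilon$ appearing in $\NC_0(\epsilon,\kappa,\tau)$ must be chosen small compared to the large deviation rate and to $\alpha$, while $\tau$ is chosen small compared to $\epsilon\kappa$. Once $\epsilon$ is fixed, the constants $\omega$ and $c$ in the large deviation bounds (used for (i) and (ii)) determine the maximal admissible $\tau$; meanwhile $\kappa$ is provided by \cite{HS} and does not depend on $\alpha$ or $\epsilon$, which is why the lemma can assert $\kappa = \kappa(\mu^\circ)$ independently of those parameters.
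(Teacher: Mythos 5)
Your reductions of conditions (i) and (iii) are essentially fine: the tail bound does follow from the large deviation estimate for $\mu^\circ$ (which keeps a finite exponential moment by Aoun's lemma), and since the class of proper affine subspaces is stable under translation, the Fubini argument does transfer affine non-concentration from the single measure $\tilde\mu^\circ_m$ to the $2D$-fold additive convolution. The genuine gap is your treatment of condition (ii). You claim that a large deviation estimate for $\log\abs{\det g}$, after rescaling, gives a polynomial lower bound on $\abs{\det x}$ for $\tilde\mu^\circ_m$-typical $x$. This is false: $\mu^\circ$ is supported on $\GL_d(\Z)\cap G^\circ$, so $\abs{\det_{\R^d}(g)}=1$ (the Lyapunov exponents sum to $0$), while the rescaling is by the \emph{top} exponent, $e^{-T\lambda_1 m}$ with $\lambda_1>0$. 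The determinant of left multiplication by $e^{-T\lambda_1 m}g$ on $E$ equals $e^{-T\lambda_1 m D}\,\mathrm{Det}_E(g)$, and $\abs{\mathrm{Det}_E(g)}$ is a fixed power of $\abs{\det_{\R^d}(g)}=1$; hence every point of the support of $\tilde\mu^\circ_m$ has determinant about $e^{-T\lambda_1 D m}$, far below $\delta^{\epsilon}\geq e^{-\epsilon m}$. In other words the rescaled matrices have unit norm but are nearly singular, so condition (ii) with $x=0$ already fails outright for the single measure, and no ``perturbation estimate for the determinant'' can transfer a (nonexistent) single-element bound to the sum: the determinant of a sum is not controlled by the determinants of the summands, and (ii) concerns arbitrary translates $x+S(\delta^{\epsilon})$, i.e.\ non-concentration near a translated determinantal variety, not a pointwise lower bound.

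This is precisely why the lemma is stated for $(\tilde\mu^\circ_m)^{\pp D}\mm(\tilde\mu^\circ_m)^{\pp D}$ rather than for $\tilde\mu^\circ_m$: summing $D=\dim E$ independent rescaled, nearly degenerate elements is what makes the determinant of the resulting element (minus any fixed $x$) typically non-small, and establishing this non-concentration is a genuine theorem, not a formal consequence of large deviations. The paper's own proof is exactly the citation of Propositions~3.1 and~3.2 of \cite{HS} (legitimately applicable to $\mu^\circ$ because $G^\circ$ is Zariski connected and acts irreducibly), combined as in the proof of \cite[Theorem~3.20]{HS}; those propositions supply both the affine non-concentration you use for (iii) and the determinant non-concentration for the $D$-fold sums that your argument is missing. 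To repair your proof you would need to invoke (or reprove) that input for the convolved measure, not a large deviation estimate for $\det g$.
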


For $m \geq 1$ and $l \geq 1$, we define $\nu_l$ to be the law of the variable
\[g_{\tau(m)} \dotsm g_1 \quad \text{conditional to the event} \quad \tau(m) = l.\]
By this definition,
\begin{equation}
\label{eq:plnul}
(\mu^\circ)^{*m} = \sum_{l \in \N} p_l \nu_l.
\end{equation}
where $p_l = \Pbb[\tau(m) = l]$.
Here, we are hiding the dependency of $\nu_l$ and $p_l$ on $m$ in order to make the notations less cumbersome. 

Let $n,s$ and $l_1,\dotsc, l_s$ be integers.
Consider the events $\tau(jm) = l_1 + \dotsb + l_j$, $j = 1,\dotsc,s$.
By the Markov property, we have
\[\Pbb\bigl[\forall j  = 1,\dotsc,s,\, \tau(jm) = l_1 + \dotsb + l_j  \bigr] = p_{l_1} \dotsm p_{l_s}\]
Now assume that $l_1 + \dotsb + l_s + k =  n$ with $k \geq 0$ and condition the variable $g_n \dotsm g_1$ according to the events above.
We obtain a decomposition
\begin{equation}
\label{eq:RWcondRT}
\mu^{*n} = \sum_{l_1 + \dotsb + l_s + k = n} p_{l_1} \dotsm p_{l_s} \mu^{*k} * \nu_{l_s} * \dotsm * \nu_{l_1} + ((\Pbb[\tau(sm) > n])).
\end{equation}

With these preparations, the proof of Theorem~\ref{thm:decaymun} is not difficult.
\begin{proof}[Proof of Theorem~\ref{thm:decaymun}]
Let $\alpha > 0$ be given. 
In this proof, each occurence of $c$ denotes a small positive constant depending on $\mu$ and $\alpha$ but independent of $n$.

Let $\kappa = \kappa(\mu^\circ) > 0$ be the constant given by Lemma~\ref{lm:muppD}.
Let $s = s(E,\kappa) \geq 1$ and $\epsilon = \epsilon(E,\kappa) > 0$ be the constants given by Theorem~\ref{thm:fourier}.
By Lemma~\ref{lm:muppD}, there exists $\tau > 0$ such that  $(\tilde\mu^\circ_m)^{\pp D} \mm (\tilde\mu^\circ_m)^{\pp D}$ satisfies $\NC(\epsilon/2,\kappa,2\tau)$ in $E$ at all scales $\delta \in {[e^{-m},e^{-\alpha m/2}]}$, provided that $m \geq 1$ is large enough.
Without loss of generality, we may assume $\tau < \kappa \epsilon$, $\tau < \epsilon /2$.

Let $\omega = \omega(\mu,\alpha)$ to be a constant whose value is to be determined later.
For $n \geq 1$, and choose $m = \left\lfloor \frac{n}{(T + \omega)s} \right\rfloor$. 
Everything below is true for $n$ sufficiently large (larger than some $n_0$ depending on $\mu$ and $\alpha$).
By Lemma~\ref{lm:LDreturn}, we have
\[\Pbb[\tau(sm) > n] \leq e^{-cn}\]
and 
\[\Pbb[\tau(sm) < n - 3 \omega n ] \leq e^{-cn}.\]
Put 
\[\Lcal = \{\, l \in \N \mid p_l \geq e^{-\frac{\alpha\tau}{4D}m} \,\}.\]
We can bound 
\[ \sum_{(l_1,\dotsc,l_s) \not\in \Lcal^s} p_{l_1} \dotsm p_{l_s} \leq sn e^{-\frac{\alpha\tau}{4D}m} \leq e^{-cn}.\]
Thus, \eqref{eq:RWcondRT} becomes
\[ \mu^{*n} = \sum_{\substack{l_1,\dotsc,l_s \in \Lcal, k \leq 3 \omega n \\ l_1 + \dotsb + l_s + k = n}}  p_{l_1} \dotsm p_{l_s} \mu^{*k} * \nu_{l_s} * \dotsm * \nu_{l_1}  + ((e^{-cn})).\]
Let $\gamma$ be one of $\gamma_1,\dots, \gamma_J$. To finish the proof of the theorem, it is suffices establish an upper bound for the quantity
\[I_{l_1,\dotsc,l_s,k}(\xi) :=  \int_{\gamma E} e\bigl(\xi (\gamma^{-1}g)\bigr) \dd \bigl( \mu^{*k} * \nu_{l_s} * \dotsm * \nu_{l_1} \bigr) (g)\]
uniformly for all $l_1,\dotsc,l_s \in \Lcal$, $k \leq 3 \omega n$ with $l_1 + \dotsb + l_s + k = n$.

Indeed, developping $(\tilde\mu^\circ_m)^{\pp D} \mm (\tilde\mu^\circ_m)^{\pp D}$ using \eqref{eq:plnul}, we see
\[(\tilde\mu^\circ_m)^{\pp D} \mm (\tilde\mu^\circ_m)^{\pp D} = p_l^{2D} (e^{-T\lambda_1 m})_* \bigl(\nu_l^{\pp D} \mm \nu_l^{\pp D}\bigr) + ((1)).\]
Since $p_l^{2D} \geq e^{-\alpha \tau m/2} \geq \delta^\tau$ for $l \in \Lcal$, it follows from Lemma~\ref{lm:eta'NC} that $(e^{-T\lambda_1 m})_* \bigl(\nu_l^{\pp D} \mm \nu_l^{\pp D}\bigr)$ satisfies $\NC(\epsilon,\kappa,\tau)$ at all scales $\delta \in {[e^{-m},e^{-\alpha m/2}]}$, provided that $m \geq 1$ is large enough.

Theorem~\ref{thm:fourier} tells us that for $(l_1,\dotsc,l_s) \in \Lcal^s$, for all $\xi \in E^*$ with $e^{\alpha m/2} \leq  e^{T\lambda_1 sm} \norm{\xi} \leq e^m$,
\[\Bigl\lvert \Bigl( \bigl(\nu_{l_s}^{\pp D} \mm \nu_{l_s}^{\pp D}\bigr) * \dotsm * \bigl(\nu_{l_1}^{\pp D} \mm \nu_{l_1}^{\pp D}\bigr) \Bigr)^\wedge(\xi) \Bigr\rvert \leq e^{-\alpha \epsilon \tau m/2}.\]
Using Lemma~\ref{lm:nu123} repeatedly $s$ times, we obtain, for all $\xi \in E^*$ in the same range,
\[\bigl\lvert \bigl(  \nu_{l_s} * \dotsm * \nu_{l_1} \bigr)^\wedge(\xi) \bigr\rvert \leq e^{- \frac{\alpha \epsilon \tau}{2(2D)^s} m} \leq e^{-cn},\]

Let $E$ acts on $E^*$ on the right by
\[\forall x, y \in E,\; \forall \xi \in E^*,\quad (\xi \cdot x)(y) = \xi(xy).\]
For every $\gamma \in \Gamma$ and every $\xi \in E^*$, we have 
\[I_{l_1,\dotsc,l_s,k}(\xi) = \int_{\gamma E} (\nu_{l_s} * \dotsm * \nu_{l_1})^\wedge(\xi \cdot \gamma^{-1}g) \dd \mu^{*k}(g).\]
Note that for any $g \in \gamma E \cap \Gamma$, 
\begin{equation}
\label{eq:xigammag}
\norm{\xi} \norm{g^{-1}}^{-1} \ll_\gamma \norm{\xi \cdot \gamma^{-1} g} \ll_\gamma \norm{\xi} \norm{g}.
\end{equation}

Using the assumption that $\mu$ has a finite exponential moment and Markov's inequality, we can find a constant $C = C(\mu) \geq 1$ such that for any $k \geq 1$, the $\mu^{*k}$-measure of the set of $g \in \Gamma$ such that 
\begin{equation}
\label{eq:goodmuk}
\norm{g} \leq e^{Ck} \quad \text{and}\quad \norm{g^{-1}} \leq  e^{Ck}
\end{equation}
is at least $1 - e^{-k}$.

Set $\alpha_0 = \frac{1}{4Ts}$ and let $\xi \in E^*$ be such that $e^{\alpha n} \leq e^{\lambda_1 n}\norm{\xi} \leq e^{\alpha_0 n}$. Using $(1 - 2\omega) n \leq Tsm \leq n$ and $k \leq 3 \omega n$, we have, for any $g \in \Supp(\mu^{*k})$ satisfying \eqref{eq:goodmuk},
\[e^{(\alpha - (2\lambda_1 + 4C)\omega)n} \leq e^{T\lambda_1sm}\norm{\xi \cdot \gamma^{-1}g} \leq e^{(\alpha_0 + 4C\omega)n}.\]
Here we assumed $n$ to be larger than a constant depending on $\gamma$ to beat the implied constant in the $\ll_\gamma$ notation in \eqref{eq:xigammag}. 
With the choice $\omega = \min\{\frac{\alpha}{4\lambda_1 + 8C}, \frac{1}{16CTs}\}$, we can guarantee that this implies
\[e^{\alpha m/2} \leq e^{T\lambda_1sm}\norm{\xi \cdot \gamma^{-1}g} \leq e^m.\]
Putting everything together, we obtain
\[\abs{I_{l_1,\dotsc,l_s,k}(\xi)} \leq e^{-cn}\]
for all $l_1,\dotsc,l_s \in \Lcal$, $k \leq 3 \omega n$ with $l_1 + \dotsb + l_s + k = n$.
This concludes the proof of the theorem.
\end{proof}

\subsection{Proof of Theorems~\ref{thm:aut,base} and \ref{thm:aff,base}}
Let $\mu$ be a Borel probability measure on $\GL_d(\Z) \ltimes \R^d$ having a finite exponential moment.
Let $x \in X$ be a point.
We shall use the shorthand $\nu_n = \mu^{*n} *\delta_x$.
Assume that for some $a \in \Z^d\setminus\{0\}$ and $t \in {(0,\frac{1}{2})}$ and for some large $n$, we have 
\begin{equation}
\label{eq:nunat}
\abs{\widehat{\nu_n}(a)} > t.
\end{equation}

Let $\Gamma$ denote the group generated by $\theta_*\mu$.
Let $G$ denote the Zariski closure of $\Gamma$ and $G^\circ$ the identity component of $G$.
Let $E$ be the subalgebra generated by $G^\circ(\R)$.
Let $\gamma_1,\dots,\gamma_J$ be a complete set of representatives for the cosets in $\Gamma/(\Gamma \cap E)$.
For any integer $m$, the we can decompose
\[(\theta_*\mu)^{*m} = \sum_{j = 1}^J (\gamma_j)_*\mu_{m,j}\]
where $\mu_{m,j}$ is a measure on $\Gamma \cap E$.
By Theorem~\ref{thm:decaymun}, for $m$ large enough, we have the Fourier decay property for each $\mu_{m,j}$,
\[\forall \xi \in E^* \text{ with } e^{\alpha m} \leq e^{\lambda_1  m} \norm{\xi} \leq e^{\alpha_0 n},\quad \abs{\widehat{\mu_{m,j}}(\xi)} \leq e^{-c_0 n}.\]

Writing $\nu_n = \mu^{*m} * \nu_{n-m}$, we have
\begin{align*}
\widehat{\nu_n}(a) & = \iint e(\langle a, g y \rangle) \dd\mu^{*m}(g) \dd \nu_{n-m}(y)\\
& = \sum_{j = 1}^J \iint e(\langle a, g y \rangle) \indic_{\gamma_j E}(\theta(g)) \dd\mu^{*m}(g) \dd \nu_{n-m}(y)
\end{align*}
Thus, \eqref{eq:nunat} implies that there exists $j \in \{1,\dotsc,J\}$ such that
\[t \ll \abse{\iint e(\langle a, g y \rangle) \indic_{\gamma_j E}(\theta(g)) \dd\mu^{*m}(g) \dd \nu_{n-m}(y)}\]
By Hölder's inequality, 
\[t^{2k} \ll \int \abse{\int e(\langle a, g y \rangle) \indic_{\gamma_j E}(\theta(g)) \dd\mu^{*m}(g)}^{2k} \dd \nu_{n-m}(y)\]
After developing the $2k$-power and separating the linear part with the translation part, we obtain
\[t^{2k} \ll \int_{(\gamma_j E)^{2k}} \absbig{\widehat{\nu_{n-m}}((g_1 + \dotsb + g_k - g_{k+1} - \dotsb - g_{2k})^{\tr} a)}  \dd\bigl((\theta_*\mu)^{*m}\bigr)^{\otimes 2k}(g_1,\dotsc,g_{2k}).\]
That is,
\[t^{2k} \ll \int \absbig{\widehat{\nu_{n-m}}(g^{\tr} \gamma_j^{\tr}a)} \dd \bigl(\mu_{m,j}^{\pp k}  \mm \mu_{m,j}^{\pp k}\bigr)(g).\]
Then, the same argument in the proof of \cite[Proposition 4.1]{HS} leads to
\begin{prop}
There are constants $C \geq 1$ and $\sigma > \tau > 0$ depending only on $\theta_*\mu$ such that for $m \geq C\abs{\log t}$, the above implies that there exists a $r_0$-seperated subset $Q \subset \R^d/\Z^d$ such that
\[\nu_{n-m} \Bigl( \bigcup_{x \in Q} B(x,\rho_0)  \Bigr)\geq t^C\]
where $\rho_0 = e^{-\sigma m}\norm{a}$ and $r_0 = e^{\tau m} \rho_0$.
\end{prop}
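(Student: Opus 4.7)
The plan is to adapt the proof of \cite[Proposition 4.1]{HS} essentially verbatim. We take as input the bilinear inequality
\[t^{2k} \ll \int \absbig{\widehat{\nu_{n-m}}(g^{\tr} \gamma_j^{\tr}a)} \dd \eta(g), \quad \text{where } \eta = \mu_{m,j}^{\pp k}  \mm \mu_{m,j}^{\pp k},\]
together with two properties of $\eta$ established above: first, by Lemma~\ref{lm:muppD} combined with Lemma~\ref{lm:eta'NC}, the rescaled measure $(e^{-T\lambda_1 m})_* \eta$ satisfies $\NC(\epsilon,\kappa,\tau')$ at all scales $\delta \in {[e^{-m}, e^{-\alpha m/2}]}$; second, Theorem~\ref{thm:decaymun} provides quantitative Fourier decay for the same rescaled measure on a suitable annulus. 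The strategy is then the standard BFLM one: convert the integral lower bound into the statement that $|\widehat{\nu_{n-m}}|$ is bounded below on many well-separated frequencies, and then use a ball-counting argument to force $\nu_{n-m}$ itself to be concentrated on a small union of small balls.

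The first step is a Chebyshev / pigeonholing argument. Since the integrand is bounded by $1$ and the integral is at least $t^{2k}$, there is a set $S$ with $\eta(S) \geq t^{O(k)}$ on which $\absbig{\widehat{\nu_{n-m}}(g^{\tr} \gamma_j^{\tr}a)} \geq t^{O(k)}$. Pushing forward by $g \mapsto g^{\tr} \gamma_j^{\tr}a$ gives a set $\Xi \subset \Z^d$ of frequencies at which $|\widehat{\nu_{n-m}}|$ exceeds $t^{O(k)}$. The non-concentration of $\eta$ on proper affine subspaces, combined with its support being contained (after rescaling) in a ball of the appropriate radius, forces $\Xi$ to be suitably spread out: it contains many points inside a ball of radius $\sim e^{\lambda_1 m}\norm{a}$ that are separated at scale $\sim e^{(\lambda_1-\sigma)m}\norm{a}$, for an appropriate $\sigma>0$ depending only on $\theta_*\mu$ through $\epsilon$, $\kappa$, $\tau'$ and $\lambda_1$.

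The second step invokes the ball-counting lemma of BFLM (appearing in \cite{BFLM} and adapted in \cite{HS}): if a probability measure on $\Tbb^d$ has Fourier coefficient of magnitude $\geq \beta$ at a sufficiently dense $r$-separated subset of frequencies inside a ball of radius $R$, then the measure places mass $\geq \beta^{O(1)}$ on an $\sim R^{-1}$-neighbourhood of an $\sim r^{-1}$-separated finite set. Applying this to $\nu_{n-m}$ with $\beta = t^{O(k)}$, and with $R$ and $r$ as above, yields a $\rho_0$-ball covering around an $r_0$-separated set $Q$ of total mass $\geq t^C$, for parameters $\sigma > \tau > 0$ chosen so that the scales match those claimed in the proposition.

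The main obstacle is purely bookkeeping: one must track, through the chain of Chebyshev, push-forward, and ball-counting reductions, exactly which non-concentration scale $\delta$ is being used for $\eta$, and verify that it lies in the admissible range ${[e^{-m}, e^{-\alpha m/2}]}$ of Lemma~\ref{lm:muppD}. Choosing $k$ large enough in terms of $\theta_*\mu$, and using the finite exponential moment of $\mu$ to control the support of $\eta$, suffice for this; with these choices the argument proceeds line by line as in \cite[Sections 4--5]{HS}, with the only genuine modification being the appearance of the coset representative $\gamma_j$, which is harmless since $\gamma_j^{\tr}$ is an invertible lattice preserving map and only shifts norms by an $O_{\gamma_j}(1)$ factor.
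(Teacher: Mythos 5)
Your overall plan --- run the argument of \cite[Proposition 4.1]{HS} on the displayed bilinear inequality --- is exactly what the paper intends (its ``proof'' of this proposition is literally that citation), but the inputs you feed into it are not the ones the paper makes available, and this is a genuine gap. You assert that the non-concentration of $\eta = \mu_{m,j}^{\pp k}\mm\mu_{m,j}^{\pp k}$ follows ``by Lemma~\ref{lm:muppD} combined with Lemma~\ref{lm:eta'NC}''. Those lemmas concern the return-time measure $\mu^\circ$ and the conditioned measures $\nu_l$: one step of $\mu^\circ$ is roughly $T$ steps of $\mu$, which is why the rescaling there is $e^{-T\lambda_1 m}$ and the additive powers are $\pp D$. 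They say nothing about the coset pieces $\mu_{m,j}$ of $(\theta_*\mu)^{*m}$, whose typical norm is $e^{\lambda_1 m}$ rather than $e^{T\lambda_1 m}$, and no $\NC$ statement for $\mu_{m,j}$ (or for the pushforward of $\eta$ under $g \mapsto g^{\tr}\gamma_j^{\tr}a$) is proved anywhere in the appendix. Since your route --- Chebyshev to get many frequencies with a large Fourier coefficient, non-concentration to separate them, then the BFLM ball-counting lemma --- leans precisely on that unproved non-concentration of the frequency set, its first step fails as written; repairing it would mean redoing for $\mu_{m,j}$ the return-time analysis that the appendix carries out only for $\mu^\circ$, i.e.\ exactly the work the decomposition into cosets was designed to bypass.

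The argument actually being invoked needs only what has been established, namely the Fourier decay of $\mu_{m,j}$ from Theorem~\ref{thm:decaymun} (which is why the paper records it immediately before the H\"older step) together with the positivity already exploited in Lemma~\ref{lm:nu123}. Push $\eta$ forward under $g \mapsto g^{\tr}\gamma_j^{\tr}a$ to a measure $\zeta$ on $\Z^d$; its Fourier transform at $w \in \Tbb^d$ equals $\absbig{\widehat{\mu_{m,j}}(\xi_w)}^{2k} \geq 0$, where $\xi_w \in E^*$ is $g \mapsto \langle \gamma_j^{\tr}a, g\tilde w\rangle$ for a lift $\tilde w$ of $w$. Applying Cauchy--Schwarz to the bilinear inequality gives $t^{4k} \ll \iint \widehat{\zeta}(y-y') \dd\nu_{n-m}(y)\dd\nu_{n-m}(y')$; irreducibility of the $E$-action makes $\norm{\xi_w}$ comparable (up to constants depending on $\gamma_j$) to $\norm{a}\,\norm{\tilde w}$, so Theorem~\ref{thm:decaymun} forces $\widehat{\zeta}(w)$ to be exponentially small once $\norm{\tilde w}$ exceeds a scale $\leq \rho_0$ (the upper edge of the annulus being harmless since $n \geq C\log(\norm{a}/t)$). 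Hence $\nu_{n-m}\mm\nu_{n-m}$ charges a $\rho_0$-ball around $0$ with mass $\geq t^{O(k)}$, and an elementary covering argument then yields the $r_0$-separated set $Q$. Either switch to this Fourier-decay-plus-positivity route, or supply the missing non-concentration estimate for $\mu_{m,j}$; as it stands, attributing it to Lemma~\ref{lm:muppD} and Lemma~\ref{lm:eta'NC} is not correct.
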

From here on, the proof of Theorem~\ref{thm:aut,base} is identical to that of \cite[Theorem 1.2]{HS} and that of Theorem~\ref{thm:aff,base} is identical to that of \cite[Theorem 1.3]{HLL}. That is, the Zariski-connectedness condition is not used in the relevant parts in \cite{HS} and \cite{HLL}.

\bibliographystyle{amsplain}
\bibliography{bibautnil}
\end{document}